\documentclass[12pt]{amsart}
\usepackage{}

\usepackage{amsmath}
\usepackage{amsfonts}
\usepackage{amssymb}
\usepackage[all]{xy}           

\usepackage{bbding}
\usepackage{txfonts}
\usepackage{amscd}

\usepackage[shortlabels]{enumitem}
\usepackage{ifpdf}
\ifpdf
  \usepackage[colorlinks,final,backref=page,hyperindex]{hyperref}
\else
  \usepackage[colorlinks,final,backref=page,hyperindex,hypertex]{hyperref}
\fi
\usepackage{tikz}
\usepackage[active]{srcltx}

\topmargin -.8cm \textheight 22.8cm \oddsidemargin 0cm \evensidemargin -0cm \textwidth 16.3cm

\makeatletter

\newtheorem{thm}{Theorem}[section]
\newtheorem{lem}[thm]{Lemma}
\newtheorem{cor}[thm]{Corollary}
\newtheorem{pro}[thm]{Proposition}
\newtheorem{ex}[thm]{Example}
\newtheorem{rmk}[thm]{Remark}
\newtheorem{defi}[thm]{Definition}

\setlength{\baselineskip}{1.8\baselineskip}

\newcommand {\emptycomment}[1]{}

\newcommand{\liu}[1]{\textcolor{black}{ #1}}

\newcommand{\lon }{\,\rightarrow\,}
\newcommand{\be }{\begin{equation}}
\newcommand{\ee }{\end{equation}}

\newcommand{\g}{\mathfrak g}

\newcommand{\Real}{\mathbb R}


\newcommand{\huaL}{\mathcal{L}}
\newcommand{\huaR}{\mathcal{R}}

\newcommand{\huaG}{\mathcal{G}}

\newcommand{\huaC}{{\mathcal{C}}}

\newcommand{\huaH}{\mathcal{H}}

\newcommand{\huaO}{{\mathcal{O}}}

\newcommand{\frks}{\mathfrak s}

\newcommand{\frkB}{\mathfrak B}

\newcommand{\frkG}{\mathfrak G}
\newcommand{\frkH}{\mathfrak H}

\newcommand{\frkL}{\mathfrak L}

\newcommand{\frkR}{\mathfrak R}

\newcommand{\half}{\frac{1}{2}}
\newcommand{\Deg}{\mathrm{deg}}


\newcommand{\Id}{\rm{Id}}

\newcommand{\br}[1]{   [ \cdot,    \cdot  ]   }

\newcommand{\dt}{\delta^{T}}

\newcommand{\Hom}{\mathrm{Hom}}

\newcommand{\Sym}{\mathrm{Sym}}
\newcommand{\Nat}{\mathbb N}

\newcommand{\gl}{\mathfrak {gl}}

\newcommand{\ad}{\mathrm{ad}}

\newcommand{\sgn}{\mathrm{sgn}}

\newcommand{\K}{\mathbb{K}}

\newcommand{\perm}{\mathbb S}
\newcommand{\nat}{\mathbb Z}

\newcommand{\GRB}{\huaO}
\newcommand{\GRBN}{\mathrm{\huaO N}}

\newcommand{\KVN}{\mathrm{KVN}}

\newcommand{\KVB}{\mathrm{KV\Omega}}

\newcommand{\HN}{\mathrm{H N}}

\newcommand{\MN}{\mathrm{MN}}

\begin{document}

\title[Pre-Lie analogues of Poisson-Nijenhuis structures and Maurer-Cartan equations]{Pre-Lie analogues of Poisson-Nijenhuis structures and Maurer-Cartan equations}

\author{Jiefeng Liu}
\address{School of Mathematics and Statistics, Northeast Normal University, Changchun 130024, China}
\email{liujf12@126.com}

\author{Qi Wang}
\address{Department of Mathematics, Jilin University, Changchun 130012, Jilin, China}
\email{wangqi17@mails.jlu.edu.cn}
\vspace{-5mm}


\begin{abstract}
In this paper, we study pre-Lie analogues of Poisson-Nijenhuis structures and introduce $\GRBN$-structures on bimodules over pre-Lie algebras. We show that an $\GRBN$-structure gives rise to a hierarchy of pairwise compatible $\huaO$-operators. We study solutions of the strong Maurer-Cartan equation on the twilled pre-Lie algebra associated to an $\huaO$-operator, which gives rise to a pair of $\GRBN$-structures which are naturally in duality. We show that $\KVN$-structures and $\HN$-structures on a pre-Lie algebra $\g$ are corresponding to $\GRBN$-structures on the bimodule $(\g^*;\ad^*,-R^*)$, and $\KVB$-structures are corresponding to solutions of the strong Maurer-Cartan equation on a twilled pre-Lie algebra associated to an $\frks$-matrix.

\end{abstract}


\keywords{pre-Lie algebra, deformation, Nijenhuis structure, $\GRBN$-structure, Maurer-Cartan equation}

\footnotetext{{\it{MSC}}: 17A30, 17A60, 17B38}
\maketitle

\tableofcontents

\allowdisplaybreaks


\section{Introduction}\label{sec:intr}

Pre-Lie algebras (or left-symmetric algebras) are a class of nonassociative algebras coming from the study of convex homogeneous cones, affine manifolds and affine structures on Lie groups, deformation of associative algebras and then  appeared in many fields in mathematics and mathematical physics, such as complex and symplectic structures on Lie groups and Lie algebras, integrable systems, Poisson brackets and infinite dimensional Lie algebras, vertex algebras, quantum field theory, homotopy algebras, operads and $F$-manifold algebras. See \cite{Bakalov,Ban,ChaLiv,Dot,DSV,Lichnerowicz,LBS20,MT} and the survey paper \cite{Pre-lie algebra in geometry} for more details.

A Rota-Baxter operator of weight zero on a Lie algebra was introduced in the 1980s as the operator form of the classical Yang-Baxter equation,
named after the physicists C. N. Yang and R. Baxter \cite{BaR,Ya}.  The classical Yang-Baxter equation plays important roles in many fields in mathematics and mathematical physics such as integrable systems and quantum groups \cite{CP,STS}. See~\cite{Gub} for more details on Rota-Baxter operators. To better understand the classical Yang-Baxter equation and
the related integrable systems, the more general notion of an $\huaO$-operator
on a Lie algebra was introduced by Kupershmidt~\cite{Ku}, which can be traced back to Bordemann
\cite{Bor}.  An $\huaO$-operator gives rise to a skew-symmetric $r$-matrix in a larger Lie algebra \cite{Bai-1}. In the associative algebra context, such a structure was introduced by Uchino in \cite{Uchino}, which plays an important role in the bialgebra theory \cite{Bai-2,BGN2013A} and  leads to the splitting of operads~\cite{Bai-Bellier-Guo-Ni}. The notion of an $\huaO$-operator on a pre-Lie algebra was introduced by Bai and coauthors in \cite{Bai-Liu-Ni} for the purpose of studying the bialgebra theory of pre-Lie algebras and $L$-dendriform algebras.
\begin{defi}
    A linear map $T:V\longrightarrow \g$ is called an {\bf $\GRB$-operator} on a bimodule
    $(V;\huaL,\huaR)$ over a pre-Lie algebra $(\g,\cdot_\g)$  if it satisfies
  \begin{equation}\label{O-operator}
    T(u)\cdot_\g T(v)=T(\huaL_{T(u)}v+\huaR_{T(v)}u),\quad\forall~ u,v\in V.
  \end{equation}
\end{defi}
When $V=\g$ as a regular bimodule, an $\GRB$-operator reduces to a Rota-Baxter operator on a pre-Lie algebra \cite{LHB}. An $\GRB$-operator over a pre-Lie algebra can also be viewed as an analogue of a Poisson structure. Let $\g\ltimes_{\huaL,\huaR} V$ be a semi-direct product pre-Lie algebra and denote this structure by $\mu$. Then $C^*(V,\g)$ with the graded bracket
$${[f_1,f_2]_{\hat{\mu}}}=(-1)^{m-1}[[\hat{\mu},\hat{f}_1]^{\MN},\hat{f}_2]^{\MN},\quad\forall~f_1\in C^m(V,\g),~f_2\in C^n(V,g)$$
is a graded Lie algebra, where $\hat{\mu}$ is the horizontal lift of $\mu$ and the bracket $[-,-]^\MN$ is the Matsushima-Nijenhuis bracket on $C^*(\g\oplus V,\g\oplus V)$. It was shown in \cite{Liu19} that $T$ is an $\GRB$-operator if and only if it is a solution of the Maurer-Cartan equation, $  \half[\hat{T},\hat{T}]_{\hat{\mu}}=0$, where $\hat{T}$ is the horizontal lift of $T$. This parallels to the fact that $\pi$ is a Poisson structure on a manifold $M$  if and only if  $\pi$ is a solution of a Maurer-Cartan equation, $\half[\pi, \pi]_{\rm SN} = 0$, where the graded Lie bracket is the Schouten-Nijenhuis bracket of multi-vector fields.

Poisson-Nijenhuis structures were defined by Magri and Morosi in 1984 in their study of completely integrable systems \cite{MM}. The importance of Poisson-Nijenhuis structures is that they produce bi-Hamiltonian systems. See \cite{Kosmann1,Kosmann2} for more details on Poisson-Nijenhuis structures. The first purpose of this paper is to study the theory of analogues of Possion-Nijenhuis structures on pre-Lie algebras. We introduce the notion of an $\GRBN$-structure on a bimodule over a pre-Lie algebra, which consists of an $\huaO$-operator and a Nijenhuis structure on a bimodule over a pre-Lie algebra satisfying some compatibility conditions. The notion of a Nijenhuis structure on a $\g$-bimodule $V$ consists of a Nijenhuis operator $N$ on $\g$ and a linear map $S\in\gl(V)$ satisfying some compatibility conditions.  We point out that the introduction of the linear map ``$S$''  provides a general framework for the study of pre-Lie analogues of Poisson-Nijenhuis structures.

Another purpose of this paper is to generalize a theorem of Vaisman to pre-Lie algebras, in which he used solutions of the
strong Maurer-Cartan equation  to construct Poisson-Nijenhuis structures \cite{Vaisman}. Roughly speaking, for a Poisson manifold $(M,\pi)$, there is an induced Lie
algebroid structure on the cotangent bundle $T^*M$ and thus has a certain graded Lie bracket $[-,-]_\pi$ on the space of sections
of $\wedge^*T^*M$. Then Vaisman showed that if a $2$-form $\omega$ is a solution of the strong Maurer-Cartan equation, $d\omega=[\omega,\omega]_\pi=0$, then $N=\pi\circ \omega:TM\rightarrow TM$ is a Nijenhuis operator and $(\pi,N)$ is a Poisson-Nijenhuis structure. Similarly, for an $\huaO$-operator $T$ on a $\g$-bimodule $V$, there is an induced pre-Lie algebra structure on $V$ and denote this pre-Lie algebra by $V_T$. One can show that $\g\oplus V_T$ has a twilled pre-Lie algebra structure (see Theorem \ref{thm:Uchino}). Thus a differential graded Lie algebra structure, $(d_{\hat{\mu}_1},[-,-]_{{\hat{\mu}_2}})$, is induced on $C^*(\g,V_T)$ (see Theorem \ref{quasi-as-shLie}). By analogy with Vaisman's theorem, we assume
that $\Omega:\g\rightarrow V$ is a solution of the strong Maurer-Cartan equation, $d_{\hat{\mu}_1}\hat{ \Omega}=\half{[\hat{\Omega},\hat{\Omega}]_{\hat{\mu}_2}}=0$, where $\hat{\Omega}$ is the horizontal lift of $\Omega$, then we can show that $(N=T\circ\Omega,S=\Omega\circ T)$ is a Nijenhuis structure and $(T,N,S)$ is an $\GRBN$-structure (see Theorem \ref{thm:MC-GRBN}). This theorem can be considered as a pre-Lie version of Vaisman's result.

In Section \ref{sec:Pre}, first we recall some basic properties of pre-Lie algebras like bimodules, cohomology theory, Nijenhuis operators and Matsushima-Nijenhuis brackets for pre-Lie algebras. Then we recall the notions of Maurer-Cartan equation and the strong Maurer-Cartan equation on a twilled pre-Lie algebra. See \cite{CM,Uchino} and \cite{Kosmann88,KosmannD} for more details on twilled associative algebras and Lie algebras respectively.

In Section \ref{sec:deformation}, to define an $\GRBN$-structure on a bimodule $(V;\huaL,\huaR)$ over a pre-Lie algebra $\g$, first we study the cohomology of pre-Lie algebras with bimodules, then we study infinitesimal deformations of bimodules over pre-Lie algebras and introduce the notion of a Nijenhuis structure, which gives rise to a trivial deformation of the bimodule $(V^*;\huaL^*-\huaR^*,-\huaR^*)$ over  $\g$.  See
\cite{Ge0,Ge}, \cite{WBLS} and \cite{Sale} for more details {on}
deformations of associative algebras, pre-Lie algebras and Lie algebras respectively.

In Section \ref{sec:ON-structure}, we introduce some compatibility conditions between an $\GRB$-operator $T$ and a Nijenhuis structure $(N,S)$ to define the notion of an $\GRBN$-structure on a bimodule over a pre-Lie algebra. See \cite{HLS} for more details on $\GRBN$-structures on Lie algebras. Then we study the relations between $\KVN$-structures and $\HN$-structures on pre-Lie algebras and $\GRBN$-structures on the bimodule $(\g^*;\ad^*,-R^*)$. See \cite{WLS} for more details about $\KVN$-structures and $\HN$-structures on left-symmetric algebroids. At last, we study the relations between compatible $\GRB$-operators and $\GRBN$-structures, then various examples are given.

In Section \ref{sec:MC-ON}, we build the theory as a pre-Lie version of Vaisman's result. First we construct a twilled pre-Lie algebra $\g\bowtie V_T$ through an $\GRB$-operator $T:V\lon \g$. Then we show that a solution of the strong Maurer-Cartan equation on the twilled pre-Lie algebra $\g\bowtie V_T$ gives rise to an $\GRBN$-structure $(T, N=T\circ \Omega, S=\Omega\circ T)$. Furthermore, there is another $\GRBN$-structure $(\Omega, S, N)$ on a new pre-Lie algebra $V_T$. These two structures are dual to each other in some sense. At last, we introduce the notion of a $\KVB$-structure, which consists of an $\frks$-matrix and a symmetric $2$-cocycle satisfying some compatibility conditions and then show that $(r,\frkB)$ is a $\KVB$-structure if and only if $\frkB^\natural:\g\rightarrow \g^*$ is a solution of the strong Maurer-Cartan equation on the twilled pre-Lie algebra $\g\bowtie\g^*_{r^\sharp}$. The relations of $\KVN$-structures, $\HN$-structures and  $\KVB$-structures are discussed.

In this paper, all the vector spaces are over algebraically closed field $\mathbb K$ of characteristic $0$, and finite dimensional.
\vspace{2mm}

\noindent

{\bf Acknowledgements. }This research was  supported by NSFC (11901501).
\section{Preliminaries}\label{sec:Pre}
\subsection{Cohomology of pre-Lie algebras and Matsushima-Nijenhuis brackets}
\begin{defi}  A {\bf pre-Lie algebra} is a pair $(\g,\cdot_\g)$, where $\g$ is a vector space and  $\cdot_\g:\g\otimes \g\longrightarrow \g$ is a bilinear multiplication
satisfying that for all $x,y,z\in \g$, the associator
$(x,y,z)=(x\cdot_\g y)\cdot_\g z-x\cdot_\g(y\cdot_\g z)$ is symmetric in $x,y$,
i.e.
$$(x,y,z)=(y,x,z),\;\;{\rm or}\;\;{\rm
equivalently,}\;\;(x\cdot_\g y)\cdot_\g z-x\cdot_\g(y\cdot_\g z)=(y\cdot_\g x)\cdot_\g
z-y\cdot_\g(x\cdot_\g z).$$
\end{defi}

Let $(\g,\cdot_\g)$ be a pre-Lie algebra. The commutator $
[x,y]_\g=x\cdot_\g y-y\cdot_\g x$ defines a Lie algebra structure
on $\g$, which is called the {\bf sub-adjacent Lie algebra} of
$(\g,\cdot_\g)$ and denoted by $\g^c$. Furthermore,
$L:\g\longrightarrow \gl(\g)$ with $x\mapsto L_x$, where
$L_xy=x\cdot_\g y$, for all $x,y\in \g$, gives a module of
the Lie algebra $\g^c$ of $\g$. See \cite{Pre-lie algebra in
geometry} for more details.

\begin{defi}{\rm(\cite{WBLS})}
A {\bf Nijenhuis operator} on a
pre-Lie algebra $(\g,\cdot_\g)$ is a linear map $N:\g\longrightarrow \g$ satisfying
\begin{equation}
  N(x)\cdot_\g N(y)=N\big(N(x)\cdot_\g y+x\cdot_\g N(y)-N(x\cdot_\g y)\big),\quad \forall x,y\in \g.
\end{equation}
\end{defi}
The deformed operation $\cdot_N:\otimes^2\g\longrightarrow \g$ given by
\begin{equation}\label{eq:deformbracket}
  x\cdot_N y=N(x)\cdot_\g y+x\cdot_\g N(y)-N(x\cdot_\g y)
\end{equation}
 is a pre-Lie algebra multiplication and $N$ is a pre-Lie algebra homomorphism from $(\g,\cdot_N)$ to $(\g,\cdot_\g)$.

By direct calculations, we have
\begin{lem}\label{lem:Niejproperty}
  Let $(\g,\cdot_\g)$ be a pre-Lie algebra and $N$ a Nijenhuis operator on $\g$. For all $k,l\in\Nat$,
  \begin{itemize}
\item[$\rm(i)$]$(\g,\cdot_{N^k})$ is a pre-Lie algebra;
\item[$\rm(ii)$]$N^l$ is also a Nijenhuis operator on the pre-Lie algebra $(\g,\cdot_{N^k})$;
\item[$\rm(iii)$]The pre-Lie algebras $(\g,(\cdot_{N^k})_{N^l})$ and $(A,\cdot_{N^{k+l}})$ coincide;
\item[$\rm(iv)$]The pre-Lie algebras $(\g,\cdot_{N^k})$ and $(\g,\cdot_{N^l})$ are
compatible, that is,
any linear combination of $\cdot_{N^k}$ and $\cdot_{N^l}$ still
makes $\g$ into a pre-Lie algebra;
\item[$\rm(v)$]$N^l$ is a pre-Lie algebra homomorphism from $(\g,\cdot_{N^{k+l}})$ to $(\g,\cdot_{N^k})$.
  \end{itemize}
\end{lem}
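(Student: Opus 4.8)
The plan is to reduce all five assertions to a single \emph{master identity} that records how a Nijenhuis operator interacts with its own powers. Concretely, I would first prove that for a Nijenhuis operator $N$ on $(\g,\cdot_\g)$ and all $a,b\in\Nat$,
\begin{equation}\label{eq:master}
N^a(x)\cdot_\g N^b(y)=N^a\big(x\cdot_\g N^b(y)\big)+N^b\big(N^a(x)\cdot_\g y\big)-N^{a+b}(x\cdot_\g y),\qquad\forall\,x,y\in\g.
\end{equation}
The case $a=b=1$ is exactly the defining Nijenhuis relation, and the cases $a=0$ or $b=0$ are trivial. I would establish \eqref{eq:master} by a two-stage induction: first fix $a=1$ and induct on $b$, feeding the $(1,1)$ relation applied to $N^{b}(y)$ into the induction hypothesis; then, with $b$ arbitrary, induct on $a$, combining the $(1,b)$ case with the $(a,b)$ case so that the intermediate terms $N^{b+1}\big(N^a(x)\cdot_\g y\big)$ cancel in pairs. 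The whole lemma rests on \eqref{eq:master}, so getting this identity and its index bookkeeping right is the main — and essentially the only conceptual — obstacle; everything afterwards is specialization. I would also note that \eqref{eq:master} holds verbatim for \emph{any} pre-Lie algebra and any Nijenhuis operator on it, since its proof uses only the Nijenhuis relation and the fact that $N$ commutes with its powers.

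From \eqref{eq:master} I would extract the clean corollary that every polynomial $p(N)=\sum_i c_iN^i$ is again a Nijenhuis operator on $(\g,\cdot_\g)$: summing \eqref{eq:master} against $c_ac_b$ and regrouping the three resulting sums as $p(N)\big(x\cdot_\g p(N)y\big)$, $p(N)\big(p(N)x\cdot_\g y\big)$ and $p(N)^2(x\cdot_\g y)$ reproduces precisely the Nijenhuis relation for $p(N)$. This corollary disposes of two parts at once. Taking $p(t)=t^k$ shows $N^k$ is Nijenhuis, so by the fact recalled just before the lemma (the deformed product of a Nijenhuis operator is pre-Lie) $\cdot_{N^k}$ is a pre-Lie multiplication, which is (i). Taking $p(t)=\lambda t^k+\mu t^l$ shows $\lambda N^k+\mu N^l$ is Nijenhuis; since the deformation formula \eqref{eq:deformbracket} is linear in the operator, $\cdot_{\lambda N^k+\mu N^l}=\lambda\,\cdot_{N^k}+\mu\,\cdot_{N^l}$, which is therefore pre-Lie, giving (iv) for all coefficients simultaneously.

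For the remaining parts I would substitute the definition \eqref{eq:deformbracket} and collapse the resulting expressions with \eqref{eq:master}. For (iii), expanding $(\cdot_{N^k})_{N^l}$ via \eqref{eq:deformbracket} produces cross terms such as $N^k\big(N^l(x)\cdot_\g y\big)$ and $N^l\big(N^k(x)\cdot_\g y\big)$; applying \eqref{eq:master} with the pairs $(k,l)$ and $(l,k)$ to $N^k(x)\cdot_\g N^l(y)$ and $N^l(x)\cdot_\g N^k(y)$ makes all of them cancel and leaves exactly $\cdot_{N^{k+l}}$ (whence $(\g,(\cdot_{N^k})_{N^l})$ is pre-Lie, coinciding with $(\g,\cdot_{N^{k+l}})$). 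For (v), the homomorphism identity $N^l\big(x\cdot_{N^{k+l}}y\big)=N^l(x)\cdot_{N^k}N^l(y)$ follows in the same manner, using \eqref{eq:master} for the pairs $(k{+}l,l)$, $(l,k{+}l)$ and $(l,l)$.

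Finally, for (ii) I would first show that $N$ is a Nijenhuis operator on the deformed algebra $(\g,\cdot_{N^k})$ — again a finite expansion reduced by \eqref{eq:master} — and then invoke the master identity and its polynomial corollary \emph{inside} $(\g,\cdot_{N^k})$ (legitimate by the generic remark above) to conclude that $N^l$, being the monomial $t^l$ in $N$, is Nijenhuis on $(\g,\cdot_{N^k})$. The expansions for (ii), (iii) and (v) are routine but bulky; I expect them to be where sign or index slips are most likely, but no idea beyond \eqref{eq:master} is needed to complete any of them.
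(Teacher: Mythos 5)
Your proposal is correct. The paper offers no proof of this lemma beyond the phrase ``by direct calculations,'' and your master identity $N^a(x)\cdot_\g N^b(y)=N^a\big(x\cdot_\g N^b(y)\big)+N^b\big(N^a(x)\cdot_\g y\big)-N^{a+b}(x\cdot_\g y)$ (with its two-stage induction and the polynomial corollary) is exactly the standard and efficient way to organize those calculations; all five parts do follow from it as you describe, and for (ii) one can even shortcut your argument by noting that (iii) and (v) together give $N^l(x)\cdot_{N^k}N^l(y)=N^l\big(x(\cdot_{N^k})_{N^l}y\big)$, which is already the Nijenhuis condition for $N^l$ on $(\g,\cdot_{N^k})$.
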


\begin{defi}
  Let $(\g,\cdot_\g)$ be a pre-Lie algebra and $V$   a vector space. Let $\huaL,\huaR:\g\longrightarrow\gl(V)$ be two linear maps with $x\mapsto \huaL_x$ and $x\mapsto \huaR_x$ respectively. The triple $(V;\huaL,\huaR)$ is called a {\bf bimodule} over $\g$ if
\begin{eqnarray}
\label{representation condition 1} \huaL_x\huaL_yu-\huaL_{x\cdot_\g y}u&=&\huaL_y\huaL_xu-\huaL_{y\cdot_\g x}u,\\
 \label{representation condition 2}\huaL_x\huaR_yu-\huaR_y\huaL_xu&=&\huaR_{x\cdot_\g y}u-\huaR_y\huaR_xu, \quad \forall~x,y\in \g,~ u\in V.
\end{eqnarray}
 \end{defi}

In the sequel, we will simply call $V$ a $\g$-bimodule if there is no confusions possible.

In fact, $(V;\huaL,\huaR)$ is a bimodule of a pre-Lie algebra
$\g$ if and only if the direct sum $\g\oplus V$ of vector spaces is
 a pre-Lie algebra (the semi-direct product) by
defining the multiplication on $\g\oplus V$ by
$$
  (x_1+v_1)\cdot_{(\huaL,\huaR)}(x_2+v_2)=x_1\cdot_\g x_2+\huaL_{x_1}v_2+\huaR_{x_2}v_1,\quad \forall~ x_1,x_2\in \g,v_1,v_2\in V.
$$
  We denote it by $\g\ltimes_{\huaL,\huaR} V$ or simply by $\g\ltimes V$.

By a straightforward calculation, we have
\begin{pro}\label{pro:dual-module equiv}
 $(V;\huaL,\huaR)$ is a bimoudle over the pre-Lie algebra $(\g,\cdot_\g)$ if and only if $(V^*;\huaL^*-\huaR^*,-\huaR^*)$ is a bimoudle over the pre-Lie algebra $\g$,  where $\huaL^*:\g\longrightarrow \gl(V^*)$ and $\huaR^*:\g\longrightarrow \gl(V^*)$ are given by
$$
 \langle \huaL^*_x\xi,u\rangle=-\langle \xi,\huaL_x u\rangle,\quad\langle \huaR^*_x\xi,u\rangle=-\langle \xi,\huaR_xu\rangle,\quad \forall~ x\in \g,\xi\in V^*,u\in V.
$$
\end{pro}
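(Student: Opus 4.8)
The plan is to verify the two bimodule axioms \eqref{representation condition 1} and \eqref{representation condition 2} for the candidate dual structure $(V^*;\huaL^*-\huaR^*,-\huaR^*)$ directly, by pairing each axiom against an arbitrary element of $V$ and reducing it to an operator identity on $V$. The first thing I would record are the pairing formulas: for all $x\in\g$, $\xi\in V^*$ and $u\in V$,
\begin{equation*}
\langle(\huaL^*_x-\huaR^*_x)\xi,u\rangle=\langle\xi,(\huaR_x-\huaL_x)u\rangle,\qquad \langle-\huaR^*_x\xi,u\rangle=\langle\xi,\huaR_xu\rangle,
\end{equation*}
both of which follow at once from the defining relations of $\huaL^*$ and $\huaR^*$. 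I would deliberately argue through these pairings rather than manipulate transposes abstractly, because the convention $\langle\huaL^*_x\xi,u\rangle=-\langle\xi,\huaL_xu\rangle$ carries a sign, so the operation $(-)^*$ does \emph{not} commute with composition; this sign bookkeeping is the only place the argument can go wrong and is the main (if modest) obstacle.

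Next I would substitute these formulas into the two axioms for $(V^*;\huaL^*-\huaR^*,-\huaR^*)$. Since the pairing is nondegenerate, each dual axiom is equivalent to an operator identity on $V$. A short computation shows that axiom \eqref{representation condition 2} for the dual structure collapses to exactly \eqref{representation condition 2} for $(V;\huaL,\huaR)$, while axiom \eqref{representation condition 1} for the dual structure is equivalent to the assertion $[\huaL_x-\huaR_x,\huaL_y-\huaR_y]=\huaL_{[x,y]_\g}-\huaR_{[x,y]_\g}$, that is, to the statement that $\huaL-\huaR:\g\longrightarrow\gl(V)$ is a representation of the sub-adjacent Lie algebra $\g^c$ on $V$.

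It then remains to see that this last identity is itself a consequence of \eqref{representation condition 1} and \eqref{representation condition 2}. Expanding the commutator $[\huaL_x-\huaR_x,\huaL_y-\huaR_y]$ produces $[\huaL_x,\huaL_y]$ together with the cross terms $-[\huaL_x,\huaR_y]-[\huaR_x,\huaL_y]+[\huaR_x,\huaR_y]$; using \eqref{representation condition 2} to rewrite each cross term, these telescope to $-\huaR_{[x,y]_\g}$, and \eqref{representation condition 1} rewrites $[\huaL_x,\huaL_y]$ as $\huaL_{[x,y]_\g}$, which delivers the desired identity. This completes the ``only if'' direction; note that \eqref{representation condition 1} for $V$ is genuinely needed here, as it enters precisely through the $[\huaL_x,\huaL_y]=\huaL_{[x,y]_\g}$ term.

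Finally, for the converse I would avoid repeating the computation by observing that the passage $(V;\huaL,\huaR)\mapsto(V^*;\huaL^*-\huaR^*,-\huaR^*)$ is involutive: applying it again to $(V^*;\huaL^*-\huaR^*,-\huaR^*)$, and using the canonical identification $V^{**}\cong V$ together with $\huaL^{**}=\huaL$ and $\huaR^{**}=\huaR$, returns precisely $(V;\huaL,\huaR)$ (the two sign conventions incurred in the double dual cancel). Hence the ``if'' direction is just the ``only if'' direction applied to the dual structure, and the equivalence follows.
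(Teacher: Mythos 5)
Your proof is correct. The paper offers no argument for this proposition beyond the phrase ``by a straightforward calculation'', and your direct verification is exactly the computation being alluded to: pairing against $V$ reduces the dual version of \eqref{representation condition 2} back to \eqref{representation condition 2} itself, and reduces the dual version of \eqref{representation condition 1} to the identity $[\huaL_x-\huaR_x,\huaL_y-\huaR_y]=\huaL_{[x,y]_\g}-\huaR_{[x,y]_\g}$, which your telescoping of the cross terms via \eqref{representation condition 2} correctly establishes (this is just the known fact that $\huaL-\huaR$ represents the sub-adjacent Lie algebra $\g^c$ on $V$); the involutivity observation $(\huaL^*-\huaR^*)^*-(-\huaR^*)^*=\huaL$, $-(-\huaR^*)^*=\huaR$ disposes of the converse cleanly and is a nice touch the paper leaves implicit.
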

In the following, $(V^*;\huaL^*-\huaR^*,-\huaR^*)$ is called the {\bf dual bimodule} of the bimodule $(V;\huaL,\huaR)$.

It is obvious that $(  \K  ;\rho=0,\mu=0)$ is a bimodule, which we
call the {\bf trivial bimodule}. Let $R:\g\rightarrow
\gl(\g)$ be a linear map with $x\mapsto R_x$, where the
linear map $R_x:\g\longrightarrow\g$  is defined by
$R_x(y)=y\cdot_\g x,$ for all $x, y\in \g$. Then
$(\g;\huaL=L,\huaR=R)$ is a bimodule, which we call the
{\bf regular bimodule}. The dual bimodule of the regular bimodule $(\g;L,R)$ is just the bimodule $(\g^*;{\rm ad}^*=L^*-R^*, -R^*)$ over $\g$.

The cohomology complex for a pre-Lie algebra $(\g,\cdot_\g)$ with a bimodule $(V;\huaL,\huaR)$ is given as follows.
The set of $n$-cochains is given by
$\Hom(\wedge^{n-1}\g\otimes \g,V),\
n\geq 1.$  For all $\phi\in \Hom(\wedge^{n-1}\g\otimes \g,V)$, the coboundary operator $\delta:\Hom(\wedge^{n-1}\g\otimes \g,V)\longrightarrow \Hom(\wedge^{n}\g\otimes \g,V)$ is given by
 \begin{eqnarray}\label{eq:pre-Lie cohomology}
 \nonumber\delta\phi(x_1, \cdots,x_{n+1})&=&\sum_{i=1}^{n}(-1)^{i+1}\huaL_{x_i}\phi(x_1, \cdots,\hat{x_i},\cdots,x_{n+1})\\
\label{eq:cobold} &&+\sum_{i=1}^{n}(-1)^{i+1}\huaR_{x_{n+1}}\phi(x_1, \cdots,\hat{x_i},\cdots,x_n,x_i)\\
 \nonumber&&-\sum_{i=1}^{n}(-1)^{i+1}\phi(x_1, \cdots,\hat{x_i},\cdots,x_n,x_i\cdot_\g x_{n+1})\\
 \nonumber&&+\sum_{1\leq i<j\leq n}(-1)^{i+j}\phi([x_i,x_j]_\g,x_1,\cdots,\hat{x_i},\cdots,\hat{x_j},\cdots,x_{n+1}),
\end{eqnarray}
for all $x_i\in \g,~i=1,\cdots,n+1$. We use the
symbol $\delta^T$ to refer the coboundary operator associated to
the trivial representation.

A permutation $\sigma\in\perm_n$ is called an $(i,n-i)$-unshuffle if $\sigma(1)<\cdots<\sigma(i)$ and $\sigma(i+1)<\cdots<\sigma(n)$. If $i=0$ and $i=n$, we assume $\sigma=\Id$. The set of all $(i,n-i)$-unshuffles will be denoted by $\perm_{(i,n-i)}$. The notion of an $(i_1,\cdots,i_k)$-unshuffle and the set $\perm_{(i_1,\cdots,i_k)}$ are defined similarly.

Let $\g$ be a vector space. We consider the graded vector space $C^*(\g,\g)=\oplus_{n\ge 1}C^n(\g,\g)=\oplus_{n\ge 1}\Hom(\wedge^{n-1}\g\otimes\g,\g)$. It was shown in \cite{ChaLiv,Nij,WBLS} that $C^*(\g,\g)$ equipped with the Matsushima-Nijenhuis bracket
\begin{eqnarray}\label{eq:Nij-preLie}
{[P,Q]^{\MN}}&=&P\diamond Q-(-1)^{pq}Q\diamond P,\quad\forall P\in C^{p+1}(\g,\g),Q\in C^{q+1}(\g,\g)
\end{eqnarray}
is a graded Lie algebra, where $P\diamond Q\in C^{p+q+1}(\g,\g)$ is defined by
\begin{eqnarray*}
&&P\diamond Q(x_1,\ldots,x_{p+q+1})\\
&=&\sum_{\sigma\in\perm_{(q,1,p-1)}}\sgn(\sigma)P(Q(x_{\sigma(1)},\ldots,x_{\sigma(q)},x_{\sigma(q+1)}),x_{\sigma(q+2)},\ldots,x_{\sigma(p+q)},x_{p+q+1})\\
&&+(-1)^{pq}\sum_{\sigma\in\perm_{(p,q)}}\sgn(\sigma)P(x_{\sigma(1)},\ldots,x_{\sigma(p)},Q(x_{\sigma(p+1)},\ldots,x_{\sigma(p+q)},x_{p+q+1})).
\end{eqnarray*}
In particular, $\pi\in\Hom(\otimes^2\g,\g)$ defines a pre-Lie algebra if and only if $[\pi,\pi]^{\MN}=0.$ If $\pi$ is a pre-Lie algebra structure, then $d_{\pi}(f):=[\pi,f]^{\MN}$ is a graded derivation of the graded Lie algebra $(C^*(\g,\g),[-,-]^{\MN})$ satisfying $d_{\pi}\circ d_{\pi}=0$, so that $(C^*(\g,\g),[-,-]^{\MN},d_{\pi})$ becomes a differential graded Lie algebra.

\subsection{Maurer-Cartan equations on twilled pre-Lie algebras}
 Let $\g_1$ and $\g_2$ be vector spaces and elements in $\g_1$ will be denoted by $x,y, x_i$ and elements in $\g_2$ will be denoted by $u,v,v_i$. Let $c:\wedge^{n-1}\g_1\otimes\g_1\lon \g_2$ be a linear map. We can construct a linear map $\hat{c}\in C^n(\g_1\oplus\g_2,\g_1\oplus\g_2)$ by
\begin{eqnarray*}
\hat{c}\big(x_1+v_1,\cdots,x_n+v_n\big):=c(x_1,\cdots,x_n).
\end{eqnarray*}
In general, for a given linear map $f:\wedge^{k-1}\g_1\otimes\wedge^{l}\g_2\otimes \g_1\lon\g_j$ for $j\in \{1,2\}$, we define a linear map $\hat{f}\in C^{k+l}(\g_1\oplus\g_2,\g_1\oplus\g_2)$ by
\begin{equation*}
 \hat{f}(x_1+v_1,x_2+v_2,\cdots,x_{k}+v_{k})=\sum_{\sigma\in\perm_{(k-1,l)}}\sgn(\tau)f(x_{\tau(1)},\cdots,x_{\tau(k-1)},v_{\tau(k)},\cdots,v_{\tau(k+l-1)},x_{k+l}).
\end{equation*}
Similarly, for $f:\wedge^{k}\g_1\otimes\wedge^{l-1}\g_2\otimes \g_2\lon\g_j$ for $j\in \{1,2\}$, we define a linear map $\hat{f}\in C^{k+l}(\g_1\oplus\g_2,\g_1\oplus\g_2)$ by
\begin{equation*}
 \hat{f}(x_1+v_1,x_2+v_2,\cdots,x_{k}+v_{k})=\sum_{\sigma\in\perm_{(k,l-1)}}\sgn(\tau)f(x_{\tau(1)},\cdots,x_{\tau(k)},v_{\tau(k+1)},\cdots,v_{\tau(k+l-1)},v_{k+l}).
\end{equation*}
We call the linear map $\hat{f}$ a {\bf horizontal lift} of $f$, or simply a lift. For example, the lifts of linear maps $\alpha:\g_1\otimes\g_1\lon\g_1,~\beta:\g_1\otimes\g_2\lon\g_2$ and $\gamma:\g_2\otimes\g_1\lon\g_2$ are defined respectively by
\begin{eqnarray}
\label{semidirect-1}\hat{\alpha}\big(x_1+v_1,x_2+v_2\big)&=&\alpha(x_1,x_2),\\
\label{semidirect-2}\hat{\beta}\big(x_1+v_1,x_2+v_2\big)&=&\beta(x_1,v_2),\\
\label{semidirect-3}\hat{\gamma}\big(x_1+v_1,x_2+v_2\big)&=&\gamma(v_1,x_2).
\end{eqnarray}
We define $\huaG^{k,l}=\wedge^{k-1}\g_1\otimes\wedge^{l}\g_2\otimes \g_1+\wedge^{k}\g_1\otimes\wedge^{l-1}\g_2\otimes \g_2$. The vector space $\wedge^{n-1}{(\g_1\oplus\g_2)}\otimes (\g_1\oplus\g_2)$ is isomorphic to the direct sum of
$\huaG^{k,l}$, $k + l = n$.

\begin{defi}
A linear map $f\in \Hom\big(\wedge^{n-1}(\g_1\oplus\g_2)\otimes(\g_1\oplus\g_2),(\g_1\oplus\g_2)\big)$ has a {\bf bidegree} $k|l$, if the following four conditions hold:
\begin{itemize}
\item[\rm(i)] $k+l+1=n;$
\item[\rm(ii)] If $X$ is an element in $\huaG^{k+1,l}$, then $f(X)\in\g_1;$
\item[\rm(iii)] If $X$ is an element in $\huaG^{k,l+1}$, then $f(X)\in\g_2;$
\item[\rm(iv)] All the other case, $f(X)=0.$
\end{itemize}
We denote a linear map $f$ with bidegree $k|l$ by $||f||=k|l$.
\end{defi}

The linear maps $\hat{\alpha},~\hat{\beta},~\hat{\gamma}\in C^2(\g_1\oplus\g_2,\g_1\oplus\g_2)$ given by \eqref{semidirect-1}, \eqref{semidirect-2} and \eqref{semidirect-3} have the bidegree  $||\hat{\alpha}||=||\hat{\beta}||=||\hat{\gamma}||=1|0$. In our later study, the subspaces $C^{k|0}(\g_1\oplus\g_2,\g_1\oplus\g_2)$ will
be frequently used. By the above lift map, we have the following isomorphisms
$$C^{k|0}(\g_1\oplus\g_2,\g_1\oplus\g_2)\cong \Hom\big(\wedge^{k}\g_1\otimes\g_1,\g_1\big)\oplus\Hom\big(\wedge^{k-1}\g_1\otimes\g_2\otimes\g_1,\g_2\big)\oplus \Hom\big(\wedge^{k}\g_1\otimes\g_2,\g_2\big).$$
In particular, we have
\begin{eqnarray*}
C^{0|0}(\g_1\oplus\g_2,\g_1\oplus\g_2)&\cong& \Hom\big(\g_1,\g_1\big)\oplus \Hom\big(\g_2,\g_2\big);\\
C^{1|0}(\g_1\oplus\g_2,\g_1\oplus\g_2)&\cong& \Hom\big(\g_1\otimes\g_1,\g_1\big)\oplus \Hom\big(\g_2\otimes\g_1,\g_2\big)\oplus \Hom\big(\g_1\otimes\g_2,\g_2\big).
\end{eqnarray*}

The following lemmas shows that the Matsushima-Nijenhuis bracket is compatible with the
bidegree.
\begin{lem}\label{important-lemma-2}
If $||f||=k_f|l_f$ and $||g||=k_g|l_g$, then $[f,g]^{\MN}$ has the bidegree $k_f+k_g|l_f+l_g.$
\end{lem}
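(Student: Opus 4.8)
The plan is to reduce the statement to the behaviour of the composition product $\diamond$ and then to run a bookkeeping argument on the ``types'' of the arguments. Writing $p=k_f+l_f$ and $q=k_g+l_g$ for the relevant degrees, we have $[f,g]^{\MN}=f\diamond g-(-1)^{pq}g\diamond f$. Since the linear maps of a fixed bidegree $a|b$ form a linear subspace of $C^*(\g_1\oplus\g_2,\g_1\oplus\g_2)$ (conditions (ii)--(iv) are linear conditions on where a map sends homogeneous inputs), it suffices to show that each of $f\diamond g$ and $g\diamond f$ has bidegree $k_f+k_g|l_f+l_g$; then their difference does as well. Condition (i) is immediate: the arity of $f\diamond g$ is $p+q+1=(k_f+k_g)+(l_f+l_g)+1$, as required.

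First I would verify conditions (ii)--(iv) for $f\diamond g$ by evaluating on a homogeneous tuple, i.e.\ one in which each argument lies in $\g_1$ or in $\g_2$; by multilinearity this determines $f\diamond g$ completely. Suppose such a tuple has $a$ arguments in $\g_1$ and $b$ in $\g_2$, with $a+b=(k_f+k_g)+(l_f+l_g)+1$. In every summand of either sum defining $f\diamond g$, the map $g$ is applied to some $q+1$ of the arguments and its value is inserted as one argument of $f$, while the remaining $p$ arguments fill the other slots of $f$. (In the first sum $g$'s value is the first argument of $f$; in the second it is the last. The position is irrelevant to the counting below.) By the bidegree of $g$, the summand vanishes unless $g$ receives either $k_g+1$ arguments from $\g_1$ and $l_g$ from $\g_2$, whence $g(\cdots)\in\g_1$, or $k_g$ from $\g_1$ and $l_g+1$ from $\g_2$, whence $g(\cdots)\in\g_2$.

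The key observation is that in both surviving cases the map $f$ ends up receiving exactly $a-k_g$ inputs from $\g_1$ and $b-l_g$ from $\g_2$: in the first case $f$ gets the $\g_1$-valued output of $g$ together with the $a-(k_g+1)$ leftover $\g_1$-arguments and the $b-l_g$ leftover $\g_2$-arguments, and in the second case $f$ gets the $\g_2$-valued output of $g$ together with the $a-k_g$ leftover $\g_1$-arguments and the $b-(l_g+1)$ leftover $\g_2$-arguments. Applying the bidegree of $f$, the whole summand vanishes unless $(a-k_g,b-l_g)$ equals $(k_f+1,l_f)$ or $(k_f,l_f+1)$, that is, unless $(a,b)$ equals $(k_f+k_g+1,\,l_f+l_g)$ or $(k_f+k_g,\,l_f+l_g+1)$; moreover in the former case $f(\cdots)\in\g_1$ and in the latter $f(\cdots)\in\g_2$. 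This is exactly conditions (ii), (iii) and (iv) for bidegree $k_f+k_g|l_f+l_g$, and the identical analysis applies verbatim to $g\diamond f$.

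The computation is essentially bookkeeping, so the only real care needed---and the place I expect small slips---is checking that the two surviving cases for $g$ collapse to the same count $(a-k_g,b-l_g)$ seen by $f$, and that this is insensitive both to which of the two sums in $\diamond$ one is in and to the antisymmetrization over unshuffles (the signs $\sgn(\sigma)$ and the distinguished last slot never change the multiset of types fed to $g$ and $f$, hence never change whether a summand survives or into which of $\g_1,\g_2$ it lands). Once this is pinned down, conditions (ii)--(iv) drop out at once.
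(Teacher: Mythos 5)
Your proof is correct: the reduction to $f\diamond g$ and $g\diamond f$, the observation that bidegree is a purely counting condition on how many homogeneous arguments lie in $\g_1$ versus $\g_2$, and the check that both surviving cases for $g$ leave $f$ seeing $(a-k_g,\,b-l_g)$ inputs are exactly the verification needed, and your remark that the unshuffle signs and the distinguished last slot are irrelevant to which summands survive is the right point to pin down. The paper states this lemma without any proof, so your bookkeeping argument is precisely the direct computation the authors leave implicit; there is nothing to compare beyond that.
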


Next we recall the definition of a twilled pre-Lie algebra.
\begin{defi}
  Let $(\huaG,\diamond)$ be a pre-Lie algebra that admits a decomposition into two subspaces, $\huaG=\g_1\oplus \g_2$. The triple $(\huaG,\g_1,\g_2)$ is called a {\bf twilled pre-Lie algebra}, if $\g_1$ and $\g_2$ are subalgebras of $\huaG$. We sometimes denote a twilled pre-Lie algebra by $\g_1\bowtie \g_2$.
\end{defi}

It is not hard to see that if $\g_1\bowtie \g_2$ is a twilled pre-Lie algebra, then there are linear maps $\huaL^1,\huaR^1:\g_1\rightarrow \gl(\g_2)$ and $\huaL^2,\huaR^2:\g_2\rightarrow \gl(\g_1)$ such that $(\g_2;\huaL^1,\huaR^1)$ is a bimodule over $(\g_1,\diamond_1)$  and $(\g_1;\huaL^2,\huaR^2)$ is a bimodule over $(\g_2,\diamond_2)$, where $\diamond_1$ and $\diamond_2$ are the multiplication $\diamond$ of $\huaG$ restricted to $\g_1$ and $\g_2$, respectively. Their bimodule structures are defined by the following decomposition of the pre-Lie multiplication of $\huaG$:
$$
  x\diamond u=\huaL^1_xu+\huaR^2_ux,\quad u\diamond x=\huaL^2_ux+\huaR^1_x u,\quad\forall~x\in \g_1,u\in \g_2.
$$

Denote by $\mu_1$ the semidirect product pre-Lie algebra structure on $\g_1\ltimes_{\huaL^1,\huaR^1} \g_2$ and $\mu_2$ the semidirect product pre-Lie algebra structure on $\g_2\ltimes_{\huaL^2,\huaR^2} \g_1$. Note that the graded vector space $C^*(\huaG,\huaG)=\oplus_{n\ge 1}C^n(\huaG,\huaG)=\oplus_{n\ge 1}\Hom(\wedge^{n-1}\huaG\otimes\huaG,\huaG)$  equipped with the Matsushima-Nijenhuis bracket is a graded Lie algebra. Set $C^*(\g_1,\g_2)=\oplus_{n\ge 1}C^n(\g_1,\g_2)=\oplus_{n\ge 1}\Hom(\wedge^{n-1}\g_1\otimes\g_1,\g_2)$.

\begin{thm}{\rm(\cite{Liu19})}\label{quasi-as-shLie}
Let $(\huaG,\g_1,\g_2)$ be a twilled pre-Lie algebra. We define $d_{\hat{\mu}_1}:C^m(\g_1,\g_2)\lon C^{m+1}(\g_1,\g_2)$ and  $[-,-]_{{\hat{\mu}_2}}:C^m(\g_1,\g_2)\times C^n(\g_1,\g_2)\lon C^{m+n}(\g_1,\g_2)$ by
\begin{eqnarray}
\label{eq:shLie1}d_{\hat{\mu}_1}(f_1)&=&[\hat{\mu}_1,\hat{f}_1]^{\MN},\\
\label{eq:shLie2}{[f_1,f_2]_{\hat{\mu}_2}}&=&(-1)^{m-1}[[\hat{\mu}_2,\hat{f}_1]^{\MN},\hat{f}_2]^{\MN}
\end{eqnarray}
for all $f_1\in C^m(\g_1,\g_2),~f_2\in C^n(\g_1,\g_2).$ Then $(C^*(\g_1,\g_2),d_{\hat{\mu}_1},[-,-]_{{\hat{\mu}_2}})$ is a differential graded Lie algebra.
\end{thm}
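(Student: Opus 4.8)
The plan is to realise $(C^*(\g_1,\g_2),d_{\hat{\mu}_1},[-,-]_{\hat{\mu}_2})$ as the structure \emph{derived} on a distinguished subspace of the graded Lie algebra $(C^*(\huaG,\huaG),[-,-]^{\MN})$, the only genuine input being three quadratic relations forced by the twilled hypothesis. Write $\mu=\hat{\mu}_1+\hat{\mu}_2$ for the pre-Lie multiplication of $\huaG$; since $\hat{\mu}_1$ is assembled from lifts of the pieces of the semidirect product $\g_1\ltimes_{\huaL^1,\huaR^1}\g_2$ we have $\|\hat{\mu}_1\|=1|0$, and the symmetric computation gives $\|\hat{\mu}_2\|=0|1$. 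As $\huaG$ is a pre-Lie algebra, $[\mu,\mu]^{\MN}=0$; expanding $[\hat{\mu}_1+\hat{\mu}_2,\hat{\mu}_1+\hat{\mu}_2]^{\MN}$ and using Lemma \ref{important-lemma-2} to separate the three distinct bidegrees $2|0$, $1|1$, $0|2$, I would conclude
\[
[\hat{\mu}_1,\hat{\mu}_1]^{\MN}=0,\qquad [\hat{\mu}_1,\hat{\mu}_2]^{\MN}=0,\qquad [\hat{\mu}_2,\hat{\mu}_2]^{\MN}=0.
\]
These say precisely that $\mu_1$ and $\mu_2$ are pre-Lie structures and that they are twilled-compatible.

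First I would record that the horizontal lift embeds $C^*(\g_1,\g_2)$ into $C^*(\huaG,\huaG)$ as an \textbf{abelian} subalgebra for $[-,-]^{\MN}$: if $\hat{f}_1,\hat{f}_2$ are lifts of maps defined only on $\g_1$-arguments and valued in $\g_2$, then in every term of $\hat{f}_1\diamond\hat{f}_2$ and of $\hat{f}_2\diamond\hat{f}_1$ the inner map yields a $\g_2$-output fed into a slot of the outer map, forcing the latter to vanish; hence $[\hat{f}_1,\hat{f}_2]^{\MN}=0$. The same type-tracking gives the two closure statements that make the operations well defined. For the differential, both $\hat{\mu}_1\diamond\hat{f}_1$ and $\hat{f}_1\diamond\hat{\mu}_1$ are nonzero only on pure $\g_1$-inputs and then land in $\g_2$, so $d_{\hat{\mu}_1}(f_1)=[\hat{\mu}_1,\hat{f}_1]^{\MN}$ is again the lift of an element of $C^{m+1}(\g_1,\g_2)$. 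For the bracket, $[\hat{\mu}_2,\hat{f}_1]^{\MN}$ acquires the genuine bidegree $m|0$, and bracketing it once more with the $\g_2$-valued lift $\hat{f}_2$ returns a cochain supported on $\g_1$-arguments with values in $\g_2$, i.e.\ an element of $C^{m+n}(\g_1,\g_2)$.

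With these inputs the three differential-graded-Lie-algebra axioms follow from the derived-bracket formalism. The relation $[\hat{\mu}_1,\hat{\mu}_1]^{\MN}=0$ together with the graded Jacobi identity for $[-,-]^{\MN}$ gives $d_{\hat{\mu}_1}^2=\half[[\hat{\mu}_1,\hat{\mu}_1]^{\MN},-]^{\MN}=0$. Because $\hat{\mu}_2$ satisfies $[\hat{\mu}_2,\hat{\mu}_2]^{\MN}=0$ and the subspace $C^*(\g_1,\g_2)$ is abelian and stable under $[[\hat{\mu}_2,-]^{\MN},-]^{\MN}$, the derived bracket $[f_1,f_2]_{\hat{\mu}_2}=(-1)^{m-1}[[\hat{\mu}_2,\hat{f}_1]^{\MN},\hat{f}_2]^{\MN}$ is, after the appropriate degree shift, graded skew-symmetric and satisfies the graded Jacobi identity (abelianness is exactly what upgrades the general derived, a priori only Leibniz, bracket to a skew one). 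Finally, the mixed relation $[\hat{\mu}_1,\hat{\mu}_2]^{\MN}=0$ enters in showing $d_{\hat{\mu}_1}$ is a graded derivation of $[-,-]_{\hat{\mu}_2}$: expanding $d_{\hat{\mu}_1}[f_1,f_2]_{\hat{\mu}_2}$ by the graded Jacobi identity, the terms in which $\hat{\mu}_1$ acts on $\hat{\mu}_2$ reassemble as $[[\hat{\mu}_1,\hat{\mu}_2]^{\MN},\cdots]^{\MN}$ and cancel, leaving exactly $[d_{\hat{\mu}_1}f_1,f_2]_{\hat{\mu}_2}\pm[f_1,d_{\hat{\mu}_1}f_2]_{\hat{\mu}_2}$.

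I expect the main obstacle to be these last two verifications, namely the graded Jacobi identity for the derived bracket and the derivation property of $d_{\hat{\mu}_1}$: each requires repeated use of the graded Jacobi identity of $[-,-]^{\MN}$ with careful sign bookkeeping in the shifted grading, and in each case the whole computation collapses only because of a single cancellation enforced respectively by $[\hat{\mu}_2,\hat{\mu}_2]^{\MN}=0$ and $[\hat{\mu}_1,\hat{\mu}_2]^{\MN}=0$. The bidegree and type-tracking bookkeeping of the second paragraph is routine but must be carried out carefully, since it is what guarantees every bracket stays inside $C^*(\g_1,\g_2)$.
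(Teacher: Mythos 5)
Your proposal is correct and follows essentially the same route as the paper's proof (taken from \cite{Liu19}): one extracts the three relations $[\hat{\mu}_1,\hat{\mu}_1]^{\MN}=[\hat{\mu}_1,\hat{\mu}_2]^{\MN}=[\hat{\mu}_2,\hat{\mu}_2]^{\MN}=0$ from $[\hat{\mu}_1+\hat{\mu}_2,\hat{\mu}_1+\hat{\mu}_2]^{\MN}=0$ by separating bidegrees, observes that the lift embeds $C^*(\g_1,\g_2)$ as an abelian subalgebra of $(C^*(\huaG,\huaG),[-,-]^{\MN})$, and applies the Kosmann-Schwarzbach derived-bracket construction, with the derivation property of $d_{\hat{\mu}_1}$ coming from $[\hat{\mu}_1,\hat{\mu}_2]^{\MN}=0$ exactly as you describe. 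Your bidegree bookkeeping ($\|\hat{\mu}_1\|=1|0$, $\|\hat{\mu}_2\|=0|1$, $\|[\hat{\mu}_2,\hat{f}_1]^{\MN}\|=m|0$) is accurate, so I see no gap.
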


\begin{defi}
 Let $(\huaG,\g_1,\g_2)$ be a twilled pre-Lie algebra and $\Omega:\g_1\longrightarrow \g_2$ a linear map. The equations
  \begin{equation*}
  d_{\hat{\mu}_1} \hat{\Omega}+\half{[\hat{\Omega},\hat{\Omega}]_{\hat{\mu}_2}}=0,\quad d_{\hat{\mu}_1} \hat{\Omega}=\half{[\hat{\Omega},\hat{\Omega}]_{\hat{\mu}_2}}=0
  \end{equation*}
  are called  the {\bf Maurer-Cartan equation}  and
  the   {\bf strong Maurer-Cartan equation} respectively.
\end{defi}

\begin{lem}\label{lem:SCM-condition}
  Let $(\huaG,\g_1,\g_2)$ be a twilled pre-Lie algebra and $\Omega:\g_1\longrightarrow \g_2$ a linear map. Then $\Omega$ is a solution of the Maurer-Cartan equation
 if and only if $\Omega$ satisfies
\begin{equation}\label{eq:tMC-1}
    \Omega(x)\diamond_2\Omega(y)+\huaL^1_{x}\Omega(y)+\huaR^1_{y}\Omega(x)=\Omega\big(\huaL^2_{  \Omega(x)}y+\huaR^2_{  \Omega(y)}x\big)+\Omega(x\diamond_1 y),\quad\forall~x,y\in \g_1.
\end{equation}
 $\Omega$ is a solution of the strong Maurer-Cartan equation
  if and only if $\Omega$ satisfies
\begin{eqnarray}
\label{eq:tMC-3}   \Omega(x\diamond_1 y)&=&\huaL^1_{x}\Omega(y)+\huaR^1_{y}\Omega(x),\\
\label{eq:tMC-2}\Omega(x)\diamond_2\Omega(y)&=&\Omega\big(\huaL^2_{  \Omega(x)}y+\huaR^2_{  \Omega(y)}x\big).
\end{eqnarray}
\end{lem}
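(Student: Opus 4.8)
The plan is to unwind the two operations $d_{\hat\mu_1}$ and $[-,-]_{\hat\mu_2}$ of Theorem \ref{quasi-as-shLie} into explicit bilinear expressions on $\g_1$ and then simply read off the two sets of equations. Two structural facts drive everything. First, the multiplication $\diamond$ of $\huaG$ splits as $\diamond=\mu_1+\mu_2$, where (from the structure equations of the twilled pre-Lie algebra) $\hat\mu_1$ carries the data $\diamond_1,\huaL^1,\huaR^1$ and $\hat\mu_2$ carries $\diamond_2,\huaL^2,\huaR^2$. Second, the lift satisfies $\hat\Omega(x)=\Omega(x)$ for $x\in\g_1$ and $\hat\Omega|_{\g_2}=0$. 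Since $\Omega\in C^1(\g_1,\g_2)$, a direct inspection (equivalently, a bidegree count via Lemma \ref{important-lemma-2}, using $||\hat\mu_1||=1|0$, $||\hat\mu_2||=0|1$, $||\hat\Omega||=1|{-1}$) shows that both cochains $d_{\hat\mu_1}\hat\Omega=[\hat\mu_1,\hat\Omega]^{\MN}$ and $[\hat\Omega,\hat\Omega]_{\hat\mu_2}$ have bidegree $2|{-1}$, hence vanish unless both arguments lie in $\g_1$, where they take values in $\g_2$. It therefore suffices to evaluate on a pair $x,y\in\g_1$.

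For the first term, since $\hat\mu_1\in C^2(\huaG,\huaG)$ and $\hat\Omega\in C^1(\huaG,\huaG)$, the Matsushima--Nijenhuis bracket collapses to the Leibniz-type identity
$$[\hat\mu_1,\hat\Omega]^{\MN}(x,y)=\hat\mu_1(\hat\Omega(x),y)+\hat\mu_1(x,\hat\Omega(y))-\hat\Omega(\hat\mu_1(x,y)).$$
Evaluating each summand through the decomposition of $\diamond$ — namely $\hat\mu_1(\Omega(x),y)=\huaR^1_y\Omega(x)$, $\hat\mu_1(x,\Omega(y))=\huaL^1_x\Omega(y)$ and $\hat\mu_1(x,y)=x\diamond_1 y$ — I obtain
$$d_{\hat\mu_1}\hat\Omega(x,y)=\huaL^1_x\Omega(y)+\huaR^1_y\Omega(x)-\Omega(x\diamond_1 y),$$
so that $d_{\hat\mu_1}\hat\Omega=0$ is exactly \eqref{eq:tMC-3}.

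For the second term I would first compute $\Theta:=[\hat\mu_2,\hat\Omega]^{\MN}$ by the same Leibniz formula with $\hat\mu_2$ in place of $\hat\mu_1$, which gives $\Theta(x,y)=\huaL^2_{\Omega(x)}y+\huaR^2_{\Omega(y)}x$ for $x,y\in\g_1$, while $\Theta(\Omega(x),y)$ and $\Theta(x,\Omega(y))$ each contribute one copy of $\Omega(x)\diamond_2\Omega(y)$. The decisive simplification is that every term in which $\hat\Omega$ is applied twice vanishes (because $\hat\Omega|_{\g_2}=0$), as does $\hat\mu_2(x,y)$ for $x,y\in\g_1$ (since $\g_1$ is the module part of the semidirect product $\mu_2$). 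Assembling $[\Theta,\hat\Omega]^{\MN}(x,y)=\Theta(\hat\Omega(x),y)+\Theta(x,\hat\Omega(y))-\hat\Omega(\Theta(x,y))$ then yields
$$[\hat\Omega,\hat\Omega]_{\hat\mu_2}(x,y)=2\,\Omega(x)\diamond_2\Omega(y)-2\,\Omega\big(\huaL^2_{\Omega(x)}y+\huaR^2_{\Omega(y)}x\big),$$
and the factor $2$ cancels the $\half$, so $\half[\hat\Omega,\hat\Omega]_{\hat\mu_2}=0$ is precisely \eqref{eq:tMC-2}.

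Combining the two computations finishes the proof: the Maurer--Cartan equation $d_{\hat\mu_1}\hat\Omega+\half[\hat\Omega,\hat\Omega]_{\hat\mu_2}=0$ becomes, after rearrangement, the single identity \eqref{eq:tMC-1}, whereas the strong Maurer--Cartan equation forces the two pieces to vanish separately and therefore is equivalent to \eqref{eq:tMC-3} together with \eqref{eq:tMC-2}. I expect the only delicate point to be the iterated bracket $[[\hat\mu_2,\hat\Omega]^{\MN},\hat\Omega]^{\MN}$: one must track the unshuffle signs in $P\diamond Q$ for the degree-$1$ and degree-$2$ cochains carefully and confirm that all spurious $\hat\Omega\circ\hat\Omega$ contributions drop out, so that exactly the factor $2$ survives to match the normalization $\half$ in the Maurer--Cartan equation.
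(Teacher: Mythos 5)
Your proposal is correct and follows essentially the same route as the paper: the paper's proof is just the "direct calculation" of $d_{\hat\mu_1}\hat\Omega(x+u,y+v)=\huaL^1_x\Omega(y)+\huaR^1_y\Omega(x)-\Omega(x\diamond_1 y)$ and $\tfrac{1}{2}[\hat\Omega,\hat\Omega]_{\hat\mu_2}(x+u,y+v)=\Omega(x)\diamond_2\Omega(y)-\Omega(\huaL^2_{\Omega(x)}y+\huaR^2_{\Omega(y)}x)$, from which both equivalences are read off exactly as you do. Your additional bookkeeping (the bidegree count and the explicit tracking of the factor $2$ in the iterated bracket) merely makes explicit what the paper leaves as "direct calculations," and it checks out.
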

\begin{proof}
  By direct calculations, we have
  \begin{eqnarray*}
    d_{\hat{\mu}_1} \hat{\Omega}(x+u,y+v)&=&[\hat{\pi_1},\hat{\Omega}]^\MN(x+u,y+v)=\huaL^1_{x}\Omega(y)+\huaR^1_{y}\Omega(x)-\Omega(x\diamond_1 y),\\
   \half{[\hat{\Omega},\hat{\Omega}]_{\hat{\mu}_2}}(x+u,y+v)&=&[[\hat{\pi_2},\hat{\Omega}]^{\MN},\hat{\Omega}]^{\MN}(x+u,y+v)\\
    &=&\Omega(x)\diamond_2\Omega(y)-\Omega\big(\huaL^2_{  \Omega(x)}y+\huaR^2_{  \Omega(y)}x\big).
  \end{eqnarray*}
  Then we have
  \begin{eqnarray*}
    (d_{\hat{\mu}_1}\hat{\Omega}+\half{[\hat{\Omega},\hat{\Omega}]_{\hat{\mu}_2}})(x+u,y+v)&=&\huaL^1_{x}\Omega(y)+\huaR^1_{y}\Omega(x)-\Omega(x\diamond_1 y)+\Omega(x)\diamond_2\Omega(y)\\
    &&-\Omega\big(\huaL^2_{  \Omega(x)}y+\huaR^2_{  \Omega(y)}x\big).
  \end{eqnarray*}
Thus $\Omega$ is a solution of the Maurer-Cartan equation if and only if $\Omega$ satisfies \eqref{eq:tMC-1}. The second conclusion follows immediately.
\end{proof}

\section{Infinitesimal deformations of bimodules over pre-Lie algebras}\label{sec:deformation}
In this section, we study infinitesimal deformations of a bimodule over a pre-Lie algebra and introduce the notion of a Nijenhuis structure. First we study the cohomology of pre-Lie algebras with bimodules, which can describe the equivalence classes between infinitesimal deformations of a pre-Lie algebra with a bimodule.

\subsection{Cohomology of pre-Lie algebras with bimodules}
Let $\g$ and $V$ be two vector spaces and set $\huaG=\g\oplus V$.  Denote by $C^p(\huaG,\huaG)=\Hom(\wedge^{p-1}\huaG\otimes\huaG ,\huaG)$ and set $C^*(\huaG,\huaG)=\oplus_{p\in\Nat}C^p(\huaG,\huaG)$. The graded vector space $C^*(\huaG,\huaG)$ equipped with the Matsushima-Nijenhuis bracket $[-,-]^{\MN}$ is a graded Lie algebra. Furthermore, we have
\begin{lem}
 Let $\g$ and $V$ be two vector spaces. Then $(\oplus_{k=0}^{+\infty}C^{k|0}(\huaG,\huaG),[-,-]^{\MN})$ is a graded Lie subalgebra of $(C^*(\huaG,\huaG),[-,-]^{\MN})$.
\end{lem}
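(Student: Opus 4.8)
The plan is to reduce everything to the additivity of bidegrees recorded in Lemma~\ref{important-lemma-2}. Since the Matsushima--Nijenhuis bracket already makes $(C^*(\huaG,\huaG),[-,-]^{\MN})$ a graded Lie algebra, and since $\oplus_{k\ge 0}C^{k|0}(\huaG,\huaG)$ is a graded subspace of it (each $C^{k|0}(\huaG,\huaG)$ sits inside the single homogeneous component $C^{k+1}(\huaG,\huaG)$, because a map of bidegree $k|0$ is by definition a $(k+1)$-cochain), the graded skew-symmetry and the graded Jacobi identity are inherited automatically. Hence the only thing that genuinely requires proof is that the subspace is \emph{closed} under the bracket.

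To check closure, I would take homogeneous elements $f\in C^{k_f|0}(\huaG,\huaG)$ and $g\in C^{k_g|0}(\huaG,\huaG)$, that is $||f||=k_f|0$ and $||g||=k_g|0$ with respect to the splitting $\g_1=\g$, $\g_2=V$. Applying Lemma~\ref{important-lemma-2} yields
\[
||[f,g]^{\MN}||=(k_f+k_g)\,|\,(0+0)=(k_f+k_g)|0,
\]
so that $[f,g]^{\MN}\in C^{(k_f+k_g)|0}(\huaG,\huaG)\subseteq \oplus_{k\ge 0}C^{k|0}(\huaG,\huaG)$. The decisive point is simply that the second ($\g_2$-valued) bidegree component of each factor is $0$, and $0+0=0$, so the bracket can never produce a nonzero $l$-part.

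Finally I would pass from homogeneous elements to arbitrary elements of $\oplus_{k\ge 0}C^{k|0}(\huaG,\huaG)$ by bilinearity of $[-,-]^{\MN}$, which immediately gives closure of the whole subspace under the bracket; together with the inherited axioms this exhibits it as a graded Lie subalgebra. There is essentially no hard obstacle here, as the entire analytic content is already packaged in Lemma~\ref{important-lemma-2}; the only points demanding care are bookkeeping ones, namely correctly identifying $\g_1=\g$ and $\g_2=V$ and noting that membership in $C^{k|0}(\huaG,\huaG)$ forces the cochain degree to equal $k+1$, so that the displayed direct sum is genuinely a graded subspace rather than merely a subspace.
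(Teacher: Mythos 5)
Your argument is correct and coincides with the paper's own proof: both reduce the claim to closure under the bracket, which follows at once from Lemma~\ref{important-lemma-2} since bidegrees add and $0+0=0$. The extra bookkeeping remarks (inheritance of the graded Lie axioms, extension by bilinearity) are fine but are exactly what the paper leaves implicit.
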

\begin{proof}
  By Lemma \ref{important-lemma-2}, for $||f||=k|0$ and $||g||=l|0$, $[f,g]^{\MN}$ has the bidegree $k+l|0$. Thus the Matsushima-Nijenhuis bracket restricted to $\oplus_{k=0}^{+\infty}C^{k|0}(\huaG,\huaG)$ is closed. The conclusion follows immediately.
\end{proof}

\begin{pro}\label{pro:bimodule and C-bracket}
Let $\g$ and $V$ be two vector spaces. Let $\pi:\g\otimes\g\rightarrow \g$ and $\huaL,\huaR:\g\longrightarrow\gl(V)$ be linear maps. Then $\hat{\pi}+\hat{\huaL}+\hat{\huaR}\in C^{1|0}(\huaG,\huaG)$ is a solution of the Maurer-Cartan equation of the  graded Lie algebra $(\oplus_{k=0}^{+\infty}C^{k|0}(\huaG,\huaG),[-,-]^{\MN})$, i.e.
\begin{equation}\label{eq:MN-semidirect product}
[\hat{\pi}+\hat{\huaL}+\hat{\huaR},\hat{\pi}+\hat{\huaL}+\hat{\huaR}]^{\MN}=0
\end{equation}
if and only if
$(\g,\pi)$ is a pre-Lie algebra and $(V;\huaL,\huaR)$ is a bimodule of the pre-Lie algebra $(\g,\pi)$.
\end{pro}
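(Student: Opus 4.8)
The plan is to verify the single bracket identity \eqref{eq:MN-semidirect product} by decomposing it according to bidegree and matching each homogeneous piece against one of the defining axioms. By the preceding lemma, all three summands $\hat{\pi},\hat{\huaL},\hat{\huaR}$ lie in $C^{1|0}(\huaG,\huaG)$, so Lemma \ref{important-lemma-2} guarantees that every bracket of two of them again has bidegree $2|0$; hence \eqref{eq:MN-semidirect product} is an equation inside $C^{2|0}(\huaG,\huaG)$, and by the isomorphism
$$C^{2|0}(\huaG,\huaG)\cong \Hom(\wedge^2\g\otimes\g,\g)\oplus\Hom(\wedge^1\g\otimes V\otimes\g,V)\oplus\Hom(\wedge^2\g\otimes V,V)$$
it splits into three independent scalar-valued components. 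The strategy is to expand $[\hat{\pi}+\hat{\huaL}+\hat{\huaR},\hat{\pi}+\hat{\huaL}+\hat{\huaR}]^{\MN}$ by bilinearity and then evaluate it on the three characteristic types of arguments, reading off one necessary-and-sufficient condition from each component.

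First I would observe that $\hat{\pi}+\hat{\huaL}+\hat{\huaR}$ is exactly the horizontal lift of the semidirect-product multiplication $\mu=\cdot_{(\huaL,\huaR)}$ on $\g\oplus V$ determined by \eqref{semidirect-1}--\eqref{semidirect-3}; indeed $\hat{\pi}$ reproduces $x_1\cdot_\g x_2$, while $\hat{\huaL}$ and $\hat{\huaR}$ reproduce $\huaL_{x_1}v_2$ and $\huaR_{x_2}v_1$. Since the Matsushima-Nijenhuis bracket satisfies $[\mu,\mu]^{\MN}=0$ precisely when $\mu$ is a pre-Lie multiplication (as recorded after \eqref{eq:Nij-preLie}), equation \eqref{eq:MN-semidirect product} is equivalent to saying that $\mu$ makes $\g\oplus V$ into a pre-Lie algebra. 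By the discussion following the bimodule definition — that $\g\ltimes_{\huaL,\huaR}V$ is a pre-Lie algebra exactly when $(V;\huaL,\huaR)$ is a bimodule over the pre-Lie algebra $(\g,\pi)$ — this is in turn equivalent to the conjunction of the associator symmetry for $\pi$ and the two bimodule axioms \eqref{representation condition 1}--\eqref{representation condition 2}, which is the claim.

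To make this self-contained rather than merely citing the lift, I would spell out the three bidegree components directly. Evaluating $\half[\hat\mu,\hat\mu]^{\MN}=\hat\mu\diamond\hat\mu$ on a triple $(x_1,x_2,x_3)$ of $\g$-elements recovers the associator of $\pi$ and its symmetrization, so the $\Hom(\wedge^2\g\otimes\g,\g)$-component vanishes iff $\pi$ is pre-Lie. Evaluating on arguments of the form $(x_1,x_2,v)$ lands in $V$ and, after collecting the terms in which $\hat\huaL$ or $\hat\huaR$ is plugged into the slot of the other, reproduces \eqref{representation condition 1}; evaluating on the mixed type $(x_1,v,x_2)$ (one $\g$-vector, one $V$-vector, then a $\g$-vector) reproduces \eqref{representation condition 2}. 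The main obstacle is purely bookkeeping: keeping track of the unshuffle signs $\sgn(\sigma)$ in the definition of $P\diamond Q$ and verifying that the wedge-antisymmetry in the first $n-1$ arguments distributes the terms correctly between the two components so that \eqref{representation condition 1} and \eqref{representation condition 2} emerge with the correct signs. I expect no conceptual difficulty beyond this careful sign- and slot-accounting, after which the equivalence in both directions follows because each of the three components vanishes independently.
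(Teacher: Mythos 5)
Your proposal is correct and follows essentially the same route as the paper: expand $\half[\hat{\pi}+\hat{\huaL}+\hat{\huaR},\hat{\pi}+\hat{\huaL}+\hat{\huaR}]^{\MN}$ and evaluate it on the three types of arguments $(x_1,x_2,x_3)$, $(x_1,x_2,u_3)$ and $(x_1,u_2,x_3)$, reading off the pre-Lie identity for $\pi$ and the two bimodule axioms \eqref{representation condition 1}--\eqref{representation condition 2} in the respective components. Your preliminary observation that $\hat{\pi}+\hat{\huaL}+\hat{\huaR}$ is the semidirect-product multiplication is a correct conceptual shortcut, but the self-contained verification you then describe is exactly the paper's direct calculation.
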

\begin{proof}
By direct calculations, we have
\begin{eqnarray*}
 &&\half[\hat{\pi}+\hat{\huaL}+\hat{\huaR},\hat{\pi}+\hat{\huaL}+\hat{\huaR}]^{\MN}(x_1+u_1,x_2+u_2,x_3+u_3)\\
 &=&\pi(\pi(x_1,x_2),x_3)-\pi(\pi(x_2,x_1),x_3)-\pi(x_1,\pi(x_2,x_3))+\pi(x_2,\pi(x_1,x_3))\\
 &&+\huaL_{\pi(x_1, x_2)}u_3-\huaL_{\pi(x_2, x_1)}u_3-\huaL_{x_1}\huaL_{x_2}u_3-\huaL_{x_2}\huaL_{x_1}u_3-\huaL_{x_1}\huaR_{x_3}u_2+\huaR_{x_3}\huaL_{x_1}u_2\\
 &&+\huaR_{x_1\cdot_\g x_3}u_2-\huaR_{x_3}\huaR_{x_1}u_2+\huaL_{x_2}\huaR_{x_3}u_1-\huaR_{x_3}\huaL_{x_2}u_1-\huaR_{x_2\cdot_\g x_3}u_1+\huaR_{x_3}\huaR_{x_2}u_1.
\end{eqnarray*}
Thus if $(\g,\pi)$ is a pre-Lie algebra and $(V;\huaL,\huaR)$ is a bimodule of the pre-Lie algebra $(\g,\pi)$, then  \eqref{eq:MN-semidirect product} holds.

Conversely, we have
\begin{eqnarray*}
 &&\half[\hat{\pi}+\hat{\huaL}+\hat{\huaR},\hat{\pi}+\hat{\huaL}+\hat{\huaR}]^{\MN}(x_1,x_2,x_3)\\
 &=&\pi(\pi(x_1,x_2),x_3)-\pi(\pi(x_2,x_1),x_3)-\pi(x_1,\pi(x_2,x_3))+\pi(x_2,\pi(x_1,x_3));\\
 &&\half[\hat{\pi}+\hat{\huaL}+\hat{\huaR},\hat{\pi}+\hat{\huaL}+\hat{\huaR}]^{\MN}(x_1,x_2,u_3)\\
 &=&\huaL_{\pi(x_1, x_2)}u_3-\huaL_{\pi(x_2, x_1)}u_3-\huaL_{x_1}\huaL_{x_2}u_3-\huaL_{x_2}\huaL_{x_1}u_3;\\
 &&\half[\hat{\pi}+\hat{\huaL}+\hat{\huaR},\hat{\pi}+\hat{\huaL}+\hat{\huaR}]^{\MN}(x_1,u_2,x_3)\\
 &=&-\huaL_{x_1}\huaR_{x_3}u_2+\huaR_{x_3}\huaL_{x_1}u_2+\huaR_{x_1\cdot_\g x_3}u_2-\huaR_{x_3}\huaR_{x_1}u_2.
\end{eqnarray*}
Thus if \eqref{eq:MN-semidirect product} holds, then $(\g,\pi)$ is a pre-Lie algebra and $(V;\huaL,\huaR)$ is a bimodule of the pre-Lie algebra $(\g,\pi)$.
\end{proof}

Let $(V;\huaL,\huaR)$ be a bimodule of the pre-Lie algebra $(\g,\pi)$ and set $\mu={\pi}+{\huaL}+{\huaR}$. For $n\geq1$, we define the set of $n$-cochains $\huaC^n(\g,\huaL,\huaR)=C^{n-1|0}(\g\oplus V,\g\oplus V)$, i.e.
$$\huaC^n(\g,\huaL,\huaR)= \Hom\big(\wedge^{n-1}\g\otimes\g,\g\big)\oplus\Hom\big(\wedge^{n-2}\g\otimes V\otimes\g,V\big)\oplus \Hom\big(\wedge^{n-1}\g\otimes V,V\big).$$
Define a coboundary operator $\partial:\huaC^n(\g,\huaL,\huaR)\rightarrow \huaC^{n+1}(\g,\huaL,\huaR)$ by
\begin{equation}\label{eq:cob}
\partial(\phi)=(-1)^{n-1}[\hat{\mu},\hat{\phi}]^{\MN},\quad\forall~\phi\in \huaC^n(\g,\huaL,\huaR).
\end{equation}
In fact, since $\mu\in C^{1|0}(\g\oplus V,\g\oplus V)$ and $\phi\in C^{n-1|0}(\g\oplus V,\g\oplus V)$, by Lemma \ref{important-lemma-2}, we have $\partial(\phi)\in\huaC^{n+1}(\g,\huaL,\huaR) $. By Proposition \ref{pro:bimodule and C-bracket}, we have $[\hat{\mu},\hat{\mu}]^{\MN}=0$. Because of the graded Jacobi identity, we have $\partial\circ \partial=0$. Thus we obtain a well-defined cochain complex $(\oplus_{n=1}^{+\infty}\huaC^n(\g,\huaL,\huaR),\partial)$.
 \begin{defi}
 The cohomology of the cochain complex $(\oplus_{n=1}^{+\infty}\huaC^n(\g,\huaL,\huaR),\partial)$ is called the cohomology of the pre-Lie algebra $\g$ with the bimodule $(V;\huaL,\huaR)$. The corresponding cohomology group is denoted by $\huaH^n(\g,\huaL,\huaR)$.
 \end{defi}

Now we give the precise formula for the coboundary operator $\partial$. For any $n$-cochain $\phi\in \huaC^n(\g,\huaL,\huaR)$, we will write $\phi=(\phi_1,\phi_2,\phi_3)$, where $\phi_1\in \Hom\big(\wedge^{n-1}\g\otimes\g,\g\big)$, $\phi_2\in\Hom\big(\wedge^{n-2}\g\otimes V\otimes\g,V\big)$
and $\phi_3\in \Hom\big(\wedge^{n-1}\g\otimes V,V\big)$. Then we have
$$\partial \phi=((\partial \phi)_1,(\partial  \phi)_2,(\partial  \phi)_3),$$
where $(\partial \phi)_1,(\partial  \phi)_2$ and $(\partial  \phi)_3$ are determined by
\begin{eqnarray*}
&&(\partial \phi)_1(x_1,\cdots,x_{n+1})=\delta \phi_1(x_1,\cdots,x_{n+1});\\
 &&(\partial\phi)_2(x_1,\cdots,x_{n-1},v,x_{n})=\sum_{i=1}^{n-1}(-1)^{i+1}\Big(\huaL_{x_i}\phi_2(x_1,\cdots,\hat{x_i},\cdots,v,x_{n})\\
 &&+\huaR_{x_n}\phi_2(x_1,\cdots,\hat{x_i},\cdots,v,x_{i})-\phi_2\big(x_1,\cdots,\hat{x_i},\cdots,v,x_i\cdot_{\g}x_{n}\big)\Big)\\
 &&(-1)^{n+1}\huaR_{\phi_1(x_1,\cdots,x_n)}v+(-1)^{n+1}\huaR_{x_n}\phi_3(x_1,\cdots,x_{n-1},v)+(-1)^{n}\phi_3(x_1,\cdots,x_{n-1},\huaR_{x_n}v)\\
  &&+\sum_{1\leq i<j\leq n-1}(-1)^{i+j}\phi_2\big([x_i,x_j]_\g,\cdots,\hat{x_i},\cdots,\hat{x_j},\cdots,v,x_n\big);\\
 &&(\partial\phi)_3(x_1,\cdots,x_{n},v)=\sum_{i=1}^{n}(-1)^{i+1}\big(\huaL_{\phi_1(x_1,\cdots,\hat{x_i},\cdots,x_n,x_{i})}v+\huaL_{x_i}\phi_3(x_1,\cdots,\hat{x_i},\cdots,x_{n},v)\big)\\
 &&-\sum_{i=1}^{n}(-1)^{i+1}\phi_3\big(x_1,\cdots,\hat{x_i},\cdots,x_n,\huaL_{x_i}v\big)+\sum_{1\leq i<j\leq n}(-1)^{i+j}\phi_3\big([x_i,x_j]_\g,\cdots,\hat{x_i},\cdots,\hat{x_j},\cdots,x_n,v\big),
\end{eqnarray*}
where $\delta$ is given by \eqref{eq:pre-Lie cohomology} and $x_i\in\g,v\in V$.

\emptycomment{By direct calculations, we have
\begin{pro}
For all $\phi\in C^{n}(\huaG,\huaG)$, we have
\begin{eqnarray*}\label{eq:pre-Lie cohomology2}
&&\nonumber\partial\phi(x_1+u_1,\cdots,x_{n+1}+u_{n+1})\\
\nonumber&=&\sum_{i=1}^{n}(-1)^{i+1}(x_i+u_i)\cdot_{\huaL,\huaR}\phi(x_1+u_1,\cdots,\hat{x_i}+\hat{u_i},\cdots,x_{n+1}+u_{n+1})\\
\nonumber&&+\sum_{i=1}^{n}(-1)^{i+1}\phi(x_1+u_1,\cdots,\hat{x_i}+\hat{u_i},\cdots,x_n+u_n,x_i+u_i)\cdot_{\huaL,\huaR}( x_{n+1}+u_{n+1})\\
\nonumber &&-\sum_{i=1}^{n}(-1)^{i+1}\phi\big(x_1+u_1,\cdots,\hat{x_i}+\hat{u_i},\cdots,x_n+u_n,(x_i+u_i)\cdot_{\huaL,\huaR}(x_{n+1}+u_{n+1})\big)\\
\nonumber &&+\sum_{1\leq i<j\leq n}(-1)^{i+j}\phi\big((x_i+u_i)\cdot_{\huaL,\huaR} (x_j+u_j)-(x_j+u_j)\cdot_{\huaL,\huaR} (x_i+u_i),\cdots,\hat{x_i}+\hat{u_i},\\
&&\cdots,\hat{x_j}+\hat{u_j},\cdots,x_{n+1}+u_{n+1}\big),
\end{eqnarray*}
where $x_i\in\g,u_i\in V$ for $i=1,2,\cdots,n+1$ and $\cdot_{(\huaL,\huaR)}$ is the semi-direct product pre-Lie algebra multiplication of $\g\ltimes_{\huaL,\huaR}V$.
\end{pro}}

\subsection{Infinitesimal deformations of bimodules over pre-Lie algebras}
Let $(V;\huaL,\huaR)$ be a bimodule over a pre-Lie algebra $(\g,\cdot_\g)$. Let $\omega:\otimes^2\g\longrightarrow \g$, $\sigma:\g\longrightarrow\gl(V)$ and $\tau:\g\longrightarrow\gl(V)$ be linear maps. Consider a $t$-parametrized family of multiplication operations and linear maps:
\begin{eqnarray*}
  x\cdot_t y=x\cdot_\g y+t\omega(x,y),\quad
  \huaL^t_x=\huaL_x+t\sigma(x),\quad
 \huaR^t_x=\huaR_x+t\tau(x), \quad \forall x,y\in \g.
\end{eqnarray*}
  If $(\g,\cdot_t)$ are pre-Lie algebras and $(V;\huaL^t,\huaR^t)$ are bimodules over $(\g,\cdot_t)$  for all $t\in \mathbb K$, we say that $(\omega,\sigma,\tau)$ generates an {\bf infinitesimal deformation} of the $\g$-bimodule $V$.

Let $\pi_t$ denote the pre-Lie algebra structure $(\g,\cdot_t)$. By Proposition \ref{pro:bimodule and C-bracket}, the bimodule $(V;\huaL^t,\huaR^t)$ over the pre-Lie algebra $(\g,\cdot_t)$ is an infinitesimal deformation of the $\g$-bimodule $V$ if and only if
\begin{eqnarray*}
 [\hat{\pi_t}+\hat{\huaL^t}+\hat{\huaR^t},\hat{\pi_t}+\hat{\huaL^t}+\hat{\huaR^t}]^{\MN}=0,
\end{eqnarray*}
which is equivalent to
\begin{eqnarray}
 \label{eq:A-bimodule1}{[\hat{\omega}+\hat{\sigma}+\hat{\tau},\hat{\omega}+\hat{\sigma}+\hat{\tau}]^{\MN}}&=&0,\\
 \label{eq:A-bimodule2}{[\hat{\pi}+\hat{\huaL}+\hat{\huaR},\hat{\omega}+\hat{\sigma}+\hat{\tau}]^{\MN}}&=&0.
\end{eqnarray}
By Proposition \ref{pro:bimodule and C-bracket}, \eqref{eq:A-bimodule1} means that $(\g,\omega)$ is a pre-Lie algebra and $(V;\sigma,\tau)$ is a bimodule of the pre-Lie algebra $(\g,\omega)$. \eqref{eq:A-bimodule2} means that $\hat{\omega}+\hat{\sigma}+\hat{\tau}$ is a $2$-cocycle for the pre-Lie algebra $\g$ with the bimodule $(V;\huaL,\huaR)$, i.e. $\partial(\hat{\omega}+\hat{\sigma}+\hat{\tau})=0$.

\begin{defi} Let $(V;\huaL^t,\huaR^t)$ and $(V;{\huaL'}^t,{\huaR'}^t)$ the bimodules over pre-Lie algebras $(\g,\cdot_t)$ and $(\g,\cdot'_t)$ respectively be two  infinitesimal deformations of a $\g$-bimodule $V$. We call them {\bf equivalent} if there exists $N\in\gl(\g)$ and $S\in\gl(V)$ such that $({\Id}_A+tN,{\Id}_V+tS)$ is  a  homomorphism from the bimodule $(V;{\huaL'}^t,{\huaR'}^t)$   to the bimodule $(V;\huaL^t,\huaR^t)$, i.e. for all $x,y\in \g$, there holds:
 \begin{eqnarray*}
   ({\Id}_A+tN)(x\cdot'_t y)&=&({\Id}_A+tN)(x)\cdot_t({\Id}_A+tN)(y),\\
   ({\Id}_V+tS)\circ {\huaL'}^t_x&=&\huaL^t_{({\Id}_A+tN)(x)}\circ ({\Id}_V+tS),\\
    ({\Id}_V+tS)\circ {\huaR'}^t_x&=&\huaR^t_{({\Id}_A+tN)(x)}\circ ({\Id}_V+tS).
 \end{eqnarray*}

 An  infinitesimal deformation of a $\g$-bimodule $V$ is said to be {\bf trivial} if it is equivalent to the $\g$-bimodule $V$.
 \end{defi}

By direct calculations, the bimodule $(V;\huaL^t,\huaR^t)$ over the pre-Lie algebra $(\g,\cdot_t)$ and the bimodule $(V;{\huaL'}^t,{\huaR'}^t)$ over the pre-Lie algebra $(\g,\cdot'_t)$ are equivalent deformations  if and only if
\begin{eqnarray*}
(\hat{\omega}+\hat{\sigma}+\hat{\tau})(x+u,y+v)-(\hat{\omega'}+\hat{\sigma'}+\hat{\tau'})(x+u,y+v)&=&\partial (\hat{N}+\hat{S})(x+u,y+v),\label{2-exac}\\
(\hat{\omega'}+\hat{\sigma'}+\hat{\tau'})(N(x)+S(u),N(y)+S(v))&=&0
\end{eqnarray*}
and
\begin{eqnarray*}
  &&(\hat{N}+\hat{S})(\hat{\omega}+\hat{\sigma}+\hat{\tau})(x+u,y+v)=(\hat{\omega'}+\hat{\sigma'}+\hat{\tau'})(x+u,N(y)+S(v))\\
&&\quad\quad\quad\quad+(\hat{\omega'}+\hat{\sigma'}+\hat{\tau'})(N(x)+S(u),y+v)+(\hat{\mu}+\hat{\huaL}+\hat{\huaR})(N(x)+S(u),N(y)+S(v)).
\end{eqnarray*}

We summarize the above discussion into the following
 conclusion:
\begin{thm}\label{thm:deformation}
Let $(V;\huaL^t,\huaR^t)$ the bimodule over the pre-Lie algebra $(\g,\cdot_t)$ be an infinitesimal deformation of a $\g$-bimodule $V$ generated by $(\omega,\sigma,\tau)$. Then $\hat{\omega}+\hat{\sigma}+\hat{\tau}\in \huaC^2(\g,\huaL,\huaR)$ is
closed, i.e. $\partial(\hat{\omega}+\hat{\sigma}+\hat{\tau})=0.$ Furthermore, if two infinitesimal deformations $(V;\huaL^t,\huaR^t)$ and $(V;{\huaL'}^t,{\huaR'}^t)$ over pre-Lie algebras $(\g,\cdot_t)$ and $(\g,\cdot'_t)$ generated by $(\omega,\sigma,\tau)$ and $(\omega',\sigma',\tau')$
 respectively  are equivalent,  then
$\hat{\omega}+\hat{\sigma}+\hat{\tau}$ and $\hat{\omega'}+\hat{\sigma'}+\hat{\tau'}$ are in the same cohomology class in $\huaH^2(\g,\huaL,\huaR)$.
\end{thm}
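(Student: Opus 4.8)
The plan is to obtain both assertions directly from the cocycle and equivalence conditions recorded in the discussion preceding the statement, treating the two claims separately and quoting the identities already derived there.

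For the closedness, I would first rewrite the defining condition of an infinitesimal deformation as a polynomial identity in $t$. Since the horizontal lift is linear, $\hat{\pi_t}+\hat{\huaL^t}+\hat{\huaR^t}=\hat{\mu}+t(\hat{\omega}+\hat{\sigma}+\hat{\tau})$ with $\mu=\pi+\huaL+\huaR$, and the graded symmetry of the Matsushima-Nijenhuis bracket on degree-$2$ elements gives
\[
[\hat{\mu}+t\hat{\nu},\hat{\mu}+t\hat{\nu}]^{\MN}=[\hat{\mu},\hat{\mu}]^{\MN}+2t[\hat{\mu},\hat{\nu}]^{\MN}+t^2[\hat{\nu},\hat{\nu}]^{\MN},\qquad \hat{\nu}:=\hat{\omega}+\hat{\sigma}+\hat{\tau}.
\]
As $\K$ has characteristic $0$, the vanishing of this expression for all $t$ forces each coefficient to vanish; the $t^0$-term holds automatically by Proposition \ref{pro:bimodule and C-bracket}, while the $t^1$- and $t^2$-terms are exactly \eqref{eq:A-bimodule2} and \eqref{eq:A-bimodule1}. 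By the definition \eqref{eq:cob} of $\partial$ on a $2$-cochain, $\partial\hat{\nu}=(-1)^{2-1}[\hat{\mu},\hat{\nu}]^{\MN}=-[\hat{\mu},\hat{\nu}]^{\MN}$, so the vanishing of the $t^1$-coefficient is precisely $\partial(\hat{\omega}+\hat{\sigma}+\hat{\tau})=0$, which proves the first claim.

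For the equivalence assertion, I would simply invoke the first of the three equivalence conditions derived above, namely $(\hat{\omega}+\hat{\sigma}+\hat{\tau})(x+u,y+v)-(\hat{\omega'}+\hat{\sigma'}+\hat{\tau'})(x+u,y+v)=\partial(\hat{N}+\hat{S})(x+u,y+v)$ for all $x,y\in\g$ and $u,v\in V$, read as an identity of $2$-cochains. Because $N\in\gl(\g)$ and $S\in\gl(V)$, their lifts $\hat{N},\hat{S}$ have bidegree $0|0$, so $\hat{N}+\hat{S}\in C^{0|0}(\g\oplus V,\g\oplus V)=\huaC^1(\g,\huaL,\huaR)$ and $\partial(\hat{N}+\hat{S})\in\huaC^2(\g,\huaL,\huaR)$ is a genuine $2$-coboundary. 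Hence the two closed $2$-cochains $\hat{\omega}+\hat{\sigma}+\hat{\tau}$ and $\hat{\omega'}+\hat{\sigma'}+\hat{\tau'}$ differ by a coboundary and represent the same class in $\huaH^2(\g,\huaL,\huaR)$.

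The only genuinely computational step — and the place where I expect the main difficulty to lie — is the bracket bookkeeping behind these inputs: expanding $[\hat{\mu}+t\hat{\nu},\hat{\mu}+t\hat{\nu}]^{\MN}$ and, more delicately, verifying that the $t$-linear parts of the homomorphism identities for $(\Id+tN,\Id+tS)$ reorganize exactly into the three equivalence conditions, in particular that they reassemble into $\partial(\hat{N}+\hat{S})$ via the explicit formulas for $(\partial\phi)_1,(\partial\phi)_2,(\partial\phi)_3$. Since these verifications are routine applications of the Matsushima-Nijenhuis bracket and Lemma \ref{important-lemma-2}, and have already been carried out in the text preceding the statement, the proof itself reduces to assembling \eqref{eq:A-bimodule2} for the cocycle condition and the first equivalence condition for the coboundary, so the argument is short.
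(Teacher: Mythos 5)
Your proposal is correct and follows the same route as the paper: the theorem is explicitly a summary of the preceding discussion, where the deformation condition is expanded as $[\hat{\mu}+t\hat{\nu},\hat{\mu}+t\hat{\nu}]^{\MN}=0$, the $t$-linear coefficient is identified with $\partial(\hat{\omega}+\hat{\sigma}+\hat{\tau})=0$ via \eqref{eq:cob}, and the first equivalence condition exhibits the difference of the two cocycles as $\partial(\hat{N}+\hat{S})$ with $\hat{N}+\hat{S}\in\huaC^1(\g,\huaL,\huaR)$. Your bookkeeping (the factor $2t$ from the symmetry of the bracket on degree-$2$ elements, the sign $(-1)^{n-1}$ in $\partial$, and the bidegree $0|0$ of the lift of $(N,S)$) all checks out against the paper's formulas.
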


 One can deduce that the bimodule $(V;\huaL^t,\huaR^t)$ over the pre-Lie algebra $(\g,\cdot_t)$ is a trivial deformation if and only if for all $x,y\in \g,u,v\in V$, we have
\begin{eqnarray*}
(\hat{\omega}+\hat{\sigma}+\hat{\tau})(x+u,y+v)&=&\partial (\hat{N}+\hat{S})(x+u,y+v),\label{2-exact}\\
(\hat{N}+\hat{S})(\hat{\omega}+\hat{\sigma}+\hat{\tau})(x+u,y+v)&=&(\hat{\mu}+\hat{\huaL}+\hat{\huaR})(N(x)+S(u),N(y)+S(v)).
\end{eqnarray*}
Equivalently, we have
\begin{eqnarray}
\label{eq:trivialdeform1}\omega(x,y)&=&N(x)\cdot_\g y+x\cdot_\g N(y)-N(x\cdot_\g y),\\
\label{eq:trivialdeform2}N\omega(x,y)&=&N(x)\cdot_\g N(y),\\
\label{eq:trivialdeform3}\sigma(x)&=&\huaL_{N(x)}+\huaL_x\circ S-S\circ \huaL_x,\\
\label{eq:trivialdeform4}\huaL_{N(x)}\circ S&=&S\circ \sigma(x),\\
\label{eq:trivialdeform5}\tau(x)&=&\huaR_{N(x)}+\huaR_x\circ S-S\circ \huaR_x,\\
\label{eq:trivialdeform6}\huaR_{N(x)}\circ S&=&S\circ \tau(x).
\end{eqnarray}
It follows from \eqref{eq:trivialdeform1} and \eqref{eq:trivialdeform2} that $N$ must be a Nijenhuis operator on the pre-Lie algebra $(\g,\cdot_\g)$. It follows from \eqref{eq:trivialdeform3} and \eqref{eq:trivialdeform4} that $N$ and $S$ should satisfy the condition:
\begin{equation}\label{eq:Nijpair1}
     \huaL_{N(x)}S(v)=S(\huaL_{N(x)}v+\huaL_xS(v)-S(\huaL_xv)),\quad \forall x\in \g, v\in V.
   \end{equation}
It follows from \eqref{eq:trivialdeform5} and \eqref{eq:trivialdeform6} that $N$ and $S$ should also satisfy the condition:
\begin{equation}\label{eq:Nijpair2}
     \huaR_{N(x)}S(v)=S(\huaR_{N(x)}v+\huaR_xS(v)-S(\huaR_xv)),\quad \forall x\in \g, v\in V.
   \end{equation}

In a trivial infinitesimal deformation of a $\g$-bimodule $V$, $N$ is a Nijenhuis operator on the pre-Lie algebra $(\g,\cdot_\g)$ and conditions \eqref{eq:Nijpair1} and \eqref{eq:Nijpair2} hold. In fact, the converse is also true.
\begin{thm}\label{thm:trivial deform}
  Let $(V;\huaL,\huaR)$ be a bimodule over a pre-Lie algebra $(\g,\cdot_\g)$, $N\in\gl(\g)$ and $S\in\gl(V)$. If $N$ is a Nijenhuis operator on the pre-Lie algebra $(\g,\cdot_\g)$ \liu{and if $S$ satisfies \eqref{eq:Nijpair1} and \eqref{eq:Nijpair2}}, then a deformation of the $\g$-bimodule $V$ can be obtained by putting
  \begin{eqnarray*}
  \omega(x,y)&=&N(x)\cdot_\g y+x\cdot_\g N(y)-N(x\cdot_\g y),\\
\sigma(x)&=&\huaL_{N(x)}+\huaL_x\circ S-S\circ \huaL_x,\\
\tau(x)&=&\huaR_{N(x)}+\huaR_x\circ S-S\circ \huaR_x
\end{eqnarray*}
 for all $x, y\in \g$. Furthermore, this deformation is trivial.
\end{thm}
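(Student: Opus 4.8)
The plan is to separate the two assertions: that $(\omega,\sigma,\tau)$ generates an infinitesimal deformation, and that this deformation is trivial. By Proposition~\ref{pro:bimodule and C-bracket} and the computation preceding the theorem, $(\omega,\sigma,\tau)$ generates a deformation exactly when the pair of conditions \eqref{eq:A-bimodule1} and \eqref{eq:A-bimodule2} holds, while triviality is equivalent to the list \eqref{eq:trivialdeform1}--\eqref{eq:trivialdeform6}. So the whole theorem reduces to checking these two sets of identities, and the hypotheses — $N$ a Nijenhuis operator together with \eqref{eq:Nijpair1} and \eqref{eq:Nijpair2} — are exactly what is needed to make them hold.

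The key structural observation I would exploit is that $\hat\omega+\hat\sigma+\hat\tau=\partial(\hat N+\hat S)$. Indeed, applying the explicit coboundary $\partial$ on $\huaC^1(\g,\huaL,\huaR)=\Hom(\g,\g)\oplus\Hom(V,V)$ to $N\oplus S$ and reading off its three components, one recovers precisely $\omega$, $\sigma$ and $\tau$ as defined; in particular \eqref{eq:trivialdeform1}, \eqref{eq:trivialdeform3}, \eqref{eq:trivialdeform5} and the $2$-coboundary part of the triviality characterization hold by construction. Moreover \eqref{eq:A-bimodule2} then comes for free: since $\partial\circ\partial=0$, we get $\partial(\hat\omega+\hat\sigma+\hat\tau)=\partial^2(\hat N+\hat S)=0$, which is exactly \eqref{eq:A-bimodule2}.

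It remains to establish \eqref{eq:A-bimodule1}, which by Proposition~\ref{pro:bimodule and C-bracket} asserts that $(\g,\omega)$ is a pre-Lie algebra and $(V;\sigma,\tau)$ is a bimodule over it. Since $\omega=\cdot_N$ by \eqref{eq:deformbracket}, the first part is Lemma~\ref{lem:Niejproperty}(i) with $k=1$. The second part — that $(V;\sigma,\tau)$ satisfies \eqref{representation condition 1} and \eqref{representation condition 2} over $(\g,\cdot_N)$ — is the computational core, and I expect it to be the main obstacle. Here I would substitute $\sigma(x)=\huaL_{N(x)}+\huaL_x\circ S-S\circ\huaL_x$ and $\tau(x)=\huaR_{N(x)}+\huaR_x\circ S-S\circ\huaR_x$, expand using the bimodule axioms of $(V;\huaL,\huaR)$ over $(\g,\cdot_\g)$ and the Nijenhuis identity for $N$, and — crucially — invoke \eqref{eq:Nijpair1} to close up the $\sigma$-condition and \eqref{eq:Nijpair2} to close up the $\tau$-condition.

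Finally, for triviality I would verify the remaining members of \eqref{eq:trivialdeform1}--\eqref{eq:trivialdeform6}: equation \eqref{eq:trivialdeform2} is the homomorphism identity $N(x\cdot_N y)=N(x)\cdot_\g N(y)$ recorded just after the definition of a Nijenhuis operator, while \eqref{eq:trivialdeform4} and \eqref{eq:trivialdeform6} are \eqref{eq:Nijpair1} and \eqref{eq:Nijpair2} rewritten through the definitions of $\sigma$ and $\tau$ (apply $S$ and rearrange). With all of \eqref{eq:trivialdeform1}--\eqref{eq:trivialdeform6} in hand, the deformation is trivial. As a conceptual cross-check — and an alternative that bypasses the direct bimodule computation of the previous paragraph — one can observe that $(\Id_\g+tN,\Id_V+tS)$ intertwines the deformed and undeformed structures for every $t$; these are polynomial identities in $t$ whose top ($t^2$) coefficients are again the Nijenhuis property of $N$, \eqref{eq:Nijpair1} and \eqref{eq:Nijpair2}. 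For all but finitely many $t$ these maps are invertible, so they exhibit an isomorphism transporting the pre-Lie and bimodule axioms; since $\mathbb K$ is infinite, the axioms then hold for all $t$, which gives \eqref{eq:A-bimodule1} at a stroke.
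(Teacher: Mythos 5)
Your proposal is correct, and in fact it supplies more than the paper does: the paper's own ``proof'' is the single sentence ``It is a straightforward calculation. We omit the details,'' so the intended argument is presumably the brute-force verification of \eqref{eq:A-bimodule1}, \eqref{eq:A-bimodule2} and \eqref{eq:trivialdeform1}--\eqref{eq:trivialdeform6}. Your two shortcuts are both sound and are the genuinely useful content here. First, the identity $\hat{\omega}+\hat{\sigma}+\hat{\tau}=\partial(\hat N+\hat S)$ holds for arbitrary $N\in\gl(\g)$, $S\in\gl(V)$ (one checks it componentwise against the displayed formulas for $(\partial\phi)_1,(\partial\phi)_2,(\partial\phi)_3$ with $n=1$), so \eqref{eq:A-bimodule2} is indeed free from $\partial\circ\partial=0$. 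Second, the generic-$t$ transport argument is complete and closes the one step your middle paragraph only sketches: the homomorphism identities for $(\Id_\g+tN,\Id_V+tS)$ are polynomial in $t$ with linear coefficients given by the definitions of $\omega,\sigma,\tau$ and quadratic coefficients given by \eqref{eq:trivialdeform2}, \eqref{eq:trivialdeform4}, \eqref{eq:trivialdeform6} (equivalently the Nijenhuis identity, \eqref{eq:Nijpair1} and \eqref{eq:Nijpair2}), so they hold identically; invertibility for all but finitely many $t$ then pulls back the pre-Lie and bimodule axioms, and Zariski density over the infinite field $\K$ extends them to every $t$. This simultaneously yields the deformation property and, since the same maps witness the equivalence with the undeformed bimodule, its triviality. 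A still shorter route available in the paper (modulo the fact that it is stated just \emph{after} the theorem) is to note that the hypotheses are equivalent to $N+S$ being a Nijenhuis operator on $\g\ltimes_{\huaL,\huaR}V$ and that $\hat{\omega}+\hat{\sigma}+\hat{\tau}$ is exactly the deformed product $\cdot_{N+S}$ on the semidirect product, whence \eqref{eq:A-bimodule1} follows from the fact that deformed products of Nijenhuis operators are pre-Lie; your argument avoids any such circularity. No gaps.
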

\begin{proof}
  It is a straightforward calculation. We omit the details.
\end{proof}

 It is not hard to check that
\begin{pro}
Let $(V;\huaL,\huaR)$ be a bimodule over a pre-Lie algebra $(\g,\cdot_\g)$. Then $N$ is a Nijenhuis operator on the pre-Lie algebra $(\g,\cdot_\g)$ \liu{and $S$ satisfies \eqref{eq:Nijpair1} and \eqref{eq:Nijpair2}} if and only if  $N+S$ is a Nijenhuis operator on
the semidirect product pre-Lie algebra $\g\ltimes_{\huaL,\huaR}
V.$
\end{pro}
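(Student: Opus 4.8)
The plan is to write out the Nijenhuis condition for the diagonal map $N+S\in\gl(\g\ltimes_{\huaL,\huaR}V)$, where $(N+S)(x+v)=N(x)+S(v)$, on two arbitrary elements $X=x_1+v_1$ and $Y=x_2+v_2$ of $\g\oplus V$, and to show that it decouples cleanly into the three required identities. Recall that $N+S$ is a Nijenhuis operator on $\g\ltimes_{\huaL,\huaR}V$ precisely when
\begin{align*}
&(N+S)(X)\cdot_{(\huaL,\huaR)}(N+S)(Y)\\
&=(N+S)\Big((N+S)(X)\cdot_{(\huaL,\huaR)}Y+X\cdot_{(\huaL,\huaR)}(N+S)(Y)-(N+S)(X\cdot_{(\huaL,\huaR)}Y)\Big).
\end{align*}
First I would expand both sides using the semidirect product multiplication $(x_1+v_1)\cdot_{(\huaL,\huaR)}(x_2+v_2)=x_1\cdot_\g x_2+\huaL_{x_1}v_2+\huaR_{x_2}v_1$ together with the fact that $N+S$ preserves the splitting $\g\oplus V$, and then separate the resulting identity into its $\g$-valued and $V$-valued components.

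The $\g$-valued component involves only $x_1,x_2$ and reads $N(x_1)\cdot_\g N(x_2)=N(N(x_1)\cdot_\g x_2+x_1\cdot_\g N(x_2)-N(x_1\cdot_\g x_2))$, which is exactly the statement that $N$ is a Nijenhuis operator on $(\g,\cdot_\g)$. The crucial structural point is that the $V$-valued component contains no term in which $v_1$ and $v_2$ occur simultaneously: every product in the semidirect algebra is linear in at most one $V$-entry, so the $V$-part of the identity splits as a sum of a term linear in $v_2$ (carrying all the $\huaL$-contributions) and a term linear in $v_1$ (carrying all the $\huaR$-contributions). Setting $v_1=0$ isolates the first and yields $\huaL_{N(x_1)}S(v_2)=S(\huaL_{N(x_1)}v_2+\huaL_{x_1}S(v_2)-S(\huaL_{x_1}v_2))$, which is \eqref{eq:Nijpair1}; setting $v_2=0$ isolates the second and yields $\huaR_{N(x_2)}S(v_1)=S(\huaR_{N(x_2)}v_1+\huaR_{x_2}S(v_1)-S(\huaR_{x_2}v_1))$, which is \eqref{eq:Nijpair2}. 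Conversely, if $N$ is a Nijenhuis operator and both \eqref{eq:Nijpair1} and \eqref{eq:Nijpair2} hold, then adding the three identities back together reconstitutes the full Nijenhuis condition for $N+S$, since by bilinearity the general case is the sum of the three pure cases.

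The computation is entirely routine once the bookkeeping is set up; the only point requiring care — and the main thing to verify — is precisely this absence of mixed $v_1v_2$ terms, which is what makes the single Nijenhuis identity for $N+S$ equivalent to the conjunction of the three separate conditions. There is no genuine analytic obstacle, so in the write-up I would simply display the expansion of each component and read off the three equivalences.
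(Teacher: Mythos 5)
Your proposal is correct and is exactly the direct verification the paper leaves to the reader (the paper states the proposition with ``It is not hard to check that'' and omits the proof). Expanding the Nijenhuis identity for $N+S$ in the semidirect product and separating the $\g$-component from the two $V$-components (one linear in $v_1$, one in $v_2$, with no mixed terms) does recover precisely the Nijenhuis condition for $N$ together with \eqref{eq:Nijpair1} and \eqref{eq:Nijpair2}, in both directions.
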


 Now we shall give the main notion in this section.
 \begin{defi}
Let $(V;\huaL,\huaR)$ be a bimodule over a pre-Lie algebra
$(\g,\cdot_\g)$. A pair $(N,S)$, where $N\in\gl(\g)$ and $S\in\gl(V)$, is
called a {\bf Nijenhuis structure} on a $\g$-bimodule $V$ if $N$ and $S^*$ generate a trivial deformation of the bimodule $(V^*;\huaL^*-\huaR^*,-\huaR^*)$ over the pre-Lie algebra $(\g,\cdot_\g)$. Equivalently, $N$ is a Nijenhuis
operator on $(\g,\cdot_\g)$ and for all $x\in \g, v\in V$, the following conditions hold:
   \begin{eqnarray}
   \label{eq:coNijpair1}\huaL_{N(x)}S(v)&=&S(\huaL_{N(x)}v)+\huaL_{x}S^2(v)-S(\huaL_xS(v)),\\
   \label{eq:coNijpair2}\huaR_{N(x)}S(v)&=&S(\huaR_{N(x)}v)+\huaR_{x}S^2(v)-S(\huaR_xS(v)).
   \end{eqnarray}
 \end{defi}

 \begin{ex}\label{ex:cNP}
 {\rm Let $N$ be a Nijenhuis operator on a pre-Lie algebra $(\g,\cdot_\g)$. Then $(N,N^*)$ is a Nijenhuis structure on the bimodule $(\g^*;\ad^*,-R^*)$ over $(\g,\cdot_\g)$.}
 \end{ex}

 Given a Nijenhuis structure on a $\g$-bimodule $V$, we have the deformed pre-Lie algebra $(\g,\cdot_N)$. Next, we construct a bimodule of  $(\g,\cdot_N)$ which will be used in the following sections.

For $N\in\gl(\g)$ and $S\in\gl(V)$, define $\tilde{\huaL},\tilde{\huaR}:\g\longrightarrow\gl(V)$ by
 \begin{eqnarray}
    \label{eq:rep2}\tilde{\huaL}_x=\huaL_{N(x)}-[\huaL_x,S],\quad  \tilde{\huaR}_x=\huaR_{N(x)}-[\huaR_x,S].
 \end{eqnarray}

 \begin{pro}
 With the above notations. If $(N,S)$ is a Nijenhuis structure on a $\g$-bimodule $V$, then $(V;\tilde{\huaL},\tilde{\huaR})$ is a bimodule over the pre-Lie algebra $(\g,\cdot_N)$.
 \end{pro}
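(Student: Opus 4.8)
The plan is to reduce the statement to a property of the \emph{dual} bimodule, where the hypothesis that $(N,S)$ is a Nijenhuis structure is literally a statement about a trivial deformation, and then to read off the bimodule axioms from Proposition \ref{pro:bimodule and C-bracket}. By Proposition \ref{pro:dual-module equiv}, $(V;\tilde{\huaL},\tilde{\huaR})$ is a bimodule over $(\g,\cdot_N)$ if and only if its dual $(V^*;\tilde{\huaL}^*-\tilde{\huaR}^*,-\tilde{\huaR}^*)$ is a bimodule over $(\g,\cdot_N)$. So I would first compute these dual actions explicitly.

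Since $(\huaL_x\circ S)^*=S^*\circ\huaL_x^*$ and $(S\circ\huaL_x)^*=\huaL_x^*\circ S^*$, taking adjoints flips the commutator sign, $[\huaL_x,S]^*=-[\huaL_x^*,S^*]$, and likewise for $\huaR$. Hence
\begin{eqnarray*}
\tilde{\huaL}_x^*=\huaL_{N(x)}^*+[\huaL_x^*,S^*],\qquad
\tilde{\huaR}_x^*=\huaR_{N(x)}^*+[\huaR_x^*,S^*].
\end{eqnarray*}
Writing $\bar{\huaL}:=\huaL^*-\huaR^*$ and $\bar{\huaR}:=-\huaR^*$ for the actions of the dual bimodule $(V^*;\bar{\huaL},\bar{\huaR})$, a short rearrangement gives
\begin{eqnarray*}
\tilde{\huaL}_x^*-\tilde{\huaR}_x^*=\bar{\huaL}_{N(x)}+[\bar{\huaL}_x,S^*],\qquad
-\tilde{\huaR}_x^*=\bar{\huaR}_{N(x)}+[\bar{\huaR}_x,S^*].
\end{eqnarray*}

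Next I would recognize the right-hand sides as exactly the data $(\bar\sigma,\bar\tau)$ generated by $(N,S^*)$ via the trivial-deformation formulas \eqref{eq:trivialdeform3} and \eqref{eq:trivialdeform5} applied to the dual bimodule $(V^*;\bar{\huaL},\bar{\huaR})$, namely $\bar\sigma(x)=\bar{\huaL}_{N(x)}+[\bar{\huaL}_x,S^*]$ and $\bar\tau(x)=\bar{\huaR}_{N(x)}+[\bar{\huaR}_x,S^*]$. By definition, $(N,S)$ being a Nijenhuis structure means precisely that $(N,S^*)$ generates a trivial deformation of $(V^*;\bar{\huaL},\bar{\huaR})$; in particular this is an infinitesimal deformation, so \eqref{eq:A-bimodule1} holds for $(\bar\omega,\bar\sigma,\bar\tau)$ with $\bar\omega(x,y)=N(x)\cdot_\g y+x\cdot_\g N(y)-N(x\cdot_\g y)=x\cdot_N y$. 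By Proposition \ref{pro:bimodule and C-bracket}, \eqref{eq:A-bimodule1} says that $(V^*;\bar\sigma,\bar\tau)$ is a bimodule over the pre-Lie algebra $(\g,\cdot_N)$. Since $\bar\sigma=\tilde{\huaL}^*-\tilde{\huaR}^*$ and $\bar\tau=-\tilde{\huaR}^*$, this is exactly the dual-bimodule statement, and Proposition \ref{pro:dual-module equiv} then yields the claim.

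The only delicate point is the bookkeeping in the two displays above: the sign flip under adjoints together with the $-\huaR^*$ twist built into the dual bimodule must be tracked carefully, so that the minus-commutator defining $\tilde{\huaL}$ and $\tilde{\huaR}$ matches the plus-commutator appearing in \eqref{eq:trivialdeform3} and \eqref{eq:trivialdeform5}. A direct alternative would verify the two bimodule axioms \eqref{representation condition 1} and \eqref{representation condition 2} for $(V;\tilde{\huaL},\tilde{\huaR})$ over $(\g,\cdot_N)$ by hand, expanding $\tilde{\huaL}_x\tilde{\huaL}_y-\tilde{\huaL}_{x\cdot_N y}$ and its analogue and repeatedly invoking the Nijenhuis identity for $N$ together with \eqref{eq:coNijpair1} and \eqref{eq:coNijpair2}; this is feasible but substantially longer, which is why I would route through the dual picture instead.
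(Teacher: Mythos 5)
Your proof is correct and follows essentially the same route as the paper's: both pass to the dual bimodule $(V^*;\huaL^*-\huaR^*,-\huaR^*)$ via Proposition \ref{pro:dual-module equiv} and identify $\tilde{\huaL}^*-\tilde{\huaR}^*$ and $-\tilde{\huaR}^*$ with the deformation data generated by $(N,S^*)$. The paper packages the final step as the computation that the deformed product $\diamond_{N+S^*}$ of the Nijenhuis operator $N+S^*$ on $\g\ltimes V^*$ is a semidirect-product pre-Lie structure, which is the same fact as your appeal to \eqref{eq:A-bimodule1} together with Proposition \ref{pro:bimodule and C-bracket}.
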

\begin{proof} Since $N+S^*$ is a Nijenhuis operator on the semidirect
product pre-Lie algebra  $\g\ltimes_{\huaL^*-\huaR^*,-\huaR^*} V^*$, for all
$x,y\in \g$ and $\xi,\eta\in V^*$, the deformed operation $\diamond_{N+S^*}$ is
given by
\begin{eqnarray*}
  (x+\xi)\diamond_{N+S^*}(y+\eta)&=&\big((N+S^*)(x+\xi))\cdot_{\huaL,\huaR}(y+\eta)+(x+\xi)\cdot_{\huaL,\huaR}(N+S^*)(y+\eta)\\
  &&-(N+S^*)\big((x+\xi)\cdot_{\huaL,\huaR}(y+\eta)\big)\\
  &=&N(x)\cdot_\g y+x\cdot_\g N(y)-N(x\cdot_\g y)+(\huaL^*_{N(x)}-\huaR^*_{N(x)})\eta+(\huaL^*_{x}-\huaR^*_{x})S^*(\eta)\\
  &&-S^*(\huaL^*_{x}-\huaR^*_{x})(\eta)-\huaR^*_yS^*(\xi)-\huaR^*_{N(y)}\xi+S^*(\huaR^*_y\xi)\\
  &=&x\cdot_N y+(\tilde{\huaL}^*_x-\tilde{\huaR}^*_x)\eta-\tilde{\huaR}^*_y\xi,
\end{eqnarray*}
which implies that $(V^*;\tilde{\huaL}^*-\tilde{\huaR}^*,-\tilde{\huaR}^*)$ is a bimodule over the pre-Lie algebra $(\g,\cdot_N)$. By Proposition \ref{pro:dual-module equiv}, $(V;\tilde{\huaL},\tilde{\huaR})$ is a bimodule over the pre-Lie algebra $(\g,\cdot_N)$.
\end{proof}

\section{$\GRBN$-structures on bimodules over pre-Lie algebras and compatible $\huaO$-operators}\label{sec:ON-structure}

\subsection{$\GRBN$-structures on bimodules over pre-Lie algebras}
In this subsection, we give the notion of an $\GRBN$-structure on a bimodule over a pre-Lie algebra, which is an analogue of the Possion-Nijenhuis structure.

Let  $T:V\longrightarrow \g$ be an $\GRB$-operator and $(N,S)$ a Nijenhuis structure  on a bimodule $(V;\huaL,\huaR)$ over a pre-Lie algebra $(\g,\cdot_\g)$. Recall that $(V,\cdot^T)$ is a pre-Lie algebra, where $\cdot^T$ is given by \begin{equation}\label{eq:pre-Lie operation}
  u\cdot^T v= \huaL_{T(u)}v+\huaR_{T(v)}u,\quad\forall~u,v\in V
\end{equation}
and $T$ is a homomorphism from the pre-Lie algebra $(V,\cdot^T)$ to the pre-Lie algebra $(\g,\cdot_\g)$.

 We define the multiplication $\cdot^T_S:\otimes^2V\longrightarrow V$ to be the deformed multiplication of $\cdot^T$ by $S$, i.e.
\begin{eqnarray}
 \label{eq:defieq21}u\cdot^T_S v&=&S(u)\cdot^T v+u\cdot^T S(v)-S(u\cdot^T v),\quad\forall~u,v\in V.\label{eq:defieq33}
\end{eqnarray}
Define the multiplication $\star^T:\otimes^2V\longrightarrow V$ similar as \eqref{eq:pre-Lie operation} using $( \tilde{\huaL},\tilde{\huaR})$ given by \eqref{eq:rep2}. More precisely,
\begin{eqnarray}
  \label{eq:defieq23} u\star^T v&=&\tilde{\huaL}_{T(u)}v+\tilde{\huaR}_{T(v)}u.
\end{eqnarray}
It is not true in general that the multiplications $\cdot^T_S$ and  $\star^T$ are pre-Lie operations.

\begin{defi}
Let $T:V\longrightarrow \g$ be an $\GRB$-operator and $(N,S)$ a Nijenhuis structure on a $\g$-bimodule $V$.  The triple $(T,N, S)$ is called an {\bf $\GRBN$-structure} on  a $\g$-bimodule $V$  if $T$ and $(N,S)$ satisfy the following conditions
 \begin{eqnarray}
 \label{eq:TN1}N\circ T&=&T\circ S,\\
 \label{eq:TN2} u\cdot^{N\circ T} v&=&u\cdot^{T}_{S} v.
  \end{eqnarray}
  \end{defi}

\begin{lem}\label{lem:mulrel}
Let $(T,N,S)$ be an $\GRBN$-structure. Then we have
 $$ u\cdot^{T}_{S} v=u\star^T v.$$
\end{lem}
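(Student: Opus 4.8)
The plan is to expand both $u\cdot^{T}_{S}v$ and $u\star^T v$ in terms of the original bimodule maps $\huaL,\huaR$ together with the operators $N,S,T$, and to show that each of them reduces to the common expression $u\cdot^{N\circ T}v=\huaL_{N(T(u))}v+\huaR_{N(T(v))}u$. The engine of the argument will be a reformulation of the two defining conditions \eqref{eq:TN1} and \eqref{eq:TN2} of an $\GRBN$-structure into a single compatibility identity for $S$ against the bimodule action restricted to the image of $T$.

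First I would unfold $u\star^T v=\tilde{\huaL}_{T(u)}v+\tilde{\huaR}_{T(v)}u$ using the definition \eqref{eq:rep2}. Since $\tilde{\huaL}_x v=\huaL_{N(x)}v-\huaL_x S(v)+S(\huaL_x v)$ and likewise $\tilde{\huaR}_x u=\huaR_{N(x)}u-\huaR_x S(u)+S(\huaR_x u)$, setting $x=T(u)$ and $x=T(v)$ respectively gives
\begin{equation*}
u\star^T v=\big(\huaL_{N(T(u))}v+\huaR_{N(T(v))}u\big)-\big(\huaL_{T(u)}S(v)+\huaR_{T(v)}S(u)\big)+S\big(\huaL_{T(u)}v+\huaR_{T(v)}u\big).
\end{equation*}
Recognizing the first group as $u\cdot^{N\circ T}v$ and the last as $S(u\cdot^T v)$ via \eqref{eq:pre-Lie operation}, this becomes
\begin{equation*}
u\star^T v=u\cdot^{N\circ T}v-\big(\huaL_{T(u)}S(v)+\huaR_{T(v)}S(u)\big)+S(u\cdot^T v).
\end{equation*}

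Next I would expand $u\cdot^{T}_{S}v=S(u)\cdot^T v+u\cdot^T S(v)-S(u\cdot^T v)$ from \eqref{eq:defieq33}, again via \eqref{eq:pre-Lie operation}, now using the key substitution $T\circ S=N\circ T$ from \eqref{eq:TN1} to replace $T(S(u))$ by $N(T(u))$ and $T(S(v))$ by $N(T(v))$. This yields
\begin{equation*}
u\cdot^{T}_{S}v=u\cdot^{N\circ T}v+\big(\huaL_{T(u)}S(v)+\huaR_{T(v)}S(u)\big)-S(u\cdot^T v).
\end{equation*}
Comparing the two displays, the desired equality $u\cdot^{T}_{S}v=u\star^T v$ is equivalent to the vanishing of
\begin{equation*}
\huaL_{T(u)}S(v)+\huaR_{T(v)}S(u)-S\big(\huaL_{T(u)}v+\huaR_{T(v)}u\big).
\end{equation*}
But this is exactly what \eqref{eq:TN2} gives once \eqref{eq:TN1} is used: substituting the computed value of $u\cdot^{T}_{S}v$ into the relation $u\cdot^{N\circ T}v=u\cdot^{T}_{S}v$ and cancelling $u\cdot^{N\circ T}v$ leaves precisely this identity. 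Hence both $\cdot^{T}_{S}$ and $\star^T$ coincide with $\cdot^{N\circ T}$, and the lemma follows.

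The calculation is routine; the single genuine step, and the part I would be most careful about, is recognizing that \eqref{eq:TN2} should not be invoked abstractly but is equivalent—modulo \eqref{eq:TN1}—to the symmetric compatibility $\huaL_{T(u)}S(v)+\huaR_{T(v)}S(u)=S(\huaL_{T(u)}v+\huaR_{T(v)}u)$. Once this reformulation is in hand the matching of the two multiplications is immediate, and notably no appeal to the Nijenhuis conditions \eqref{eq:coNijpair1}--\eqref{eq:coNijpair2} or to the $\huaO$-operator equation is needed. I would double-check the bookkeeping of signs in $[\huaL_x,S]$ and $[\huaR_x,S]$, since a sign slip there is the only thing that would derail the cancellation.
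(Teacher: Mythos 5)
Your proposal is correct and is essentially the paper's own argument in expanded form: the paper observes that \eqref{eq:TN1} gives $u\cdot^{T}_{S}v+u\star^{T}v=2\,u\cdot^{N\circ T}v$ (i.e.\ your two correction terms are negatives of each other) and then cancels via \eqref{eq:TN2}. Your extra step of isolating the equivalent identity $\huaL_{T(u)}S(v)+\huaR_{T(v)}S(u)=S(\huaL_{T(u)}v+\huaR_{T(v)}u)$ is a harmless unpacking of the same computation.
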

\begin{proof} By \eqref{eq:TN1}, we have
$u\cdot^{T}_{S} v+u\star^T v=2u\cdot^{N\circ T} v.$
Then by \eqref{eq:TN2}, we obtain $u\cdot^{T}_{S} v=u\star^T v.$\end{proof}

Thus, if $(T,N,S)$ is an $\GRBN$-structure, then the three multiplications $\cdot^T_S$, $\star^T$ and $\cdot^{N\circ T}$ are the same. Moreover, all of them are pre-Lie operations.

\begin{pro}\label{pro:S-Nijenhuis operator}
 Let $(T,N,S)$ be an $\GRBN$-structure. Then $S$ is a Nijenhuis operator on the pre-Lie algebra $(V,\cdot^T)$. Thus, the multiplications $\cdot^T_S$, $\star^T$ and $\cdot^{N\circ T}$ are pre-Lie operations.
\end{pro}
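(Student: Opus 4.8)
The plan is to show that $S$ is a Nijenhuis operator on the pre-Lie algebra $(V,\cdot^T)$, meaning I must verify that for all $u,v\in V$,
\begin{equation*}
S(u)\cdot^T S(v)=S\big(S(u)\cdot^T v+u\cdot^T S(v)-S(u\cdot^T v)\big)=S(u\cdot^T_S v).
\end{equation*}
By definition \eqref{eq:defieq33} of $\cdot^T_S$, the right-hand equality is simply $S(u)\cdot^T S(v)=S(u\cdot^T_S v)$, so the whole statement reduces to establishing this single identity. First I would rewrite the left-hand side using the definition \eqref{eq:pre-Lie operation} of $\cdot^T$, namely $S(u)\cdot^T S(v)=\huaL_{T(S(u))}S(v)+\huaR_{T(S(v))}S(u)$, and then invoke the compatibility condition \eqref{eq:TN1}, $N\circ T=T\circ S$, to replace each occurrence of $T\circ S$ by $N\circ T$. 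This turns the left-hand side into $\huaL_{N(T(u))}S(v)+\huaR_{N(T(v))}S(u)$, which is precisely the shape controlled by the Nijenhuis-structure conditions.

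Next I would apply the two defining identities of a Nijenhuis structure, \eqref{eq:coNijpair1} and \eqref{eq:coNijpair2}, with $x=T(u)$ and $x=T(v)$ respectively, to expand $\huaL_{N(T(u))}S(v)$ and $\huaR_{N(T(v))}S(u)$. Each of these rewrites the term as $S$ applied to a combination of operators plus correction terms involving $S^2$. Summing the two expansions, I expect the left-hand side to organize into $S$ applied to an expression that, after using \eqref{eq:pre-Lie operation} again to recognize the pre-Lie products, matches $S(u\cdot^T_S v)$. Here I would use Lemma \ref{lem:mulrel}, which identifies $u\cdot^T_S v=u\star^T v=\tilde{\huaL}_{T(u)}v+\tilde{\huaR}_{T(v)}u$ with $\tilde{\huaL},\tilde{\huaR}$ given by \eqref{eq:rep2}; the operators $\tilde{\huaL}_x=\huaL_{N(x)}-[\huaL_x,S]$ and $\tilde{\huaR}_x=\huaR_{N(x)}-[\huaR_x,S]$ are exactly the combinations that appear after expanding via \eqref{eq:coNijpair1}--\eqref{eq:coNijpair2}.

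The main obstacle I anticipate is the bookkeeping in matching the correction terms: after expanding the left-hand side one gets $S$ acting on $\huaL_{N(T(u))}v+\huaL_{T(u)}S(v)-S(\huaL_{T(u)}v)+\huaR_{N(T(v))}u+\huaR_{T(v)}S(u)-S(\huaR_{T(v)}u)$, and one must recognize this inner expression as $\tilde{\huaL}_{T(u)}v+\tilde{\huaR}_{T(v)}u=u\star^T v$, which by Lemma \ref{lem:mulrel} equals $u\cdot^T_S v$. Since $S(u)\cdot^T S(v)=S(u\cdot^T_S v)$ is exactly the Nijenhuis condition in the form above, this completes the verification. The conclusion that $\cdot^T_S$, $\star^T$ and $\cdot^{N\circ T}$ are pre-Lie operations then follows immediately: since $S$ is a Nijenhuis operator on $(V,\cdot^T)$, the deformed multiplication $\cdot^T_S=(\cdot^T)_S$ is a pre-Lie multiplication by the general deformation property recorded after \eqref{eq:deformbracket}, and the other two coincide with it by Lemma \ref{lem:mulrel} and condition \eqref{eq:TN2}.
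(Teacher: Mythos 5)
Your strategy is the paper's: reduce the Nijenhuis condition for $S$ on $(V,\cdot^T)$ to the single identity $S(u)\cdot^T S(v)=S(u\cdot^T_S v)$, rewrite the left-hand side as $\huaL_{NT(u)}S(v)+\huaR_{NT(v)}S(u)$ using \eqref{eq:TN1}, and expand via \eqref{eq:coNijpair1}--\eqref{eq:coNijpair2}. The gap is in the sentence where you assert that the two expansions ``organize into $S$ applied to'' your displayed inner expression. What they actually give is
\begin{equation*}
S(u)\cdot^T S(v)=S\bigl(\huaL_{NT(u)}v+\huaR_{NT(v)}u\bigr)
+\huaL_{T(u)}S^2(v)+\huaR_{T(v)}S^2(u)-S\bigl(\huaL_{T(u)}S(v)\bigr)-S\bigl(\huaR_{T(v)}S(u)\bigr),
\end{equation*}
and since $\huaL_{NT(u)}v+\huaR_{NT(v)}u=u\cdot^{N\circ T}v=u\cdot^T_S v$ by \eqref{eq:TN2}, the first summand is already the desired right-hand side. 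The four remaining terms are not of the form $S(\,\cdot\,)$ and do not cancel by regrouping; you still have to prove
\begin{equation*}
\huaL_{T(u)}S^2(v)+\huaR_{T(v)}S^2(u)=S\bigl(\huaL_{T(u)}S(v)\bigr)+S\bigl(\huaR_{T(v)}S(u)\bigr),
\end{equation*}
and this is the actual crux of the proposition, not bookkeeping.

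The identity is true but needs a second, independent use of the hypotheses. Condition \eqref{eq:TN2} (together with \eqref{eq:TN1}) is equivalent to $S(\huaL_{T(u)}v+\huaR_{T(v)}u)=\huaL_{T(u)}S(v)+\huaR_{T(v)}S(u)$; replacing $v$ by $S(v)$ gives $S(\huaL_{T(u)}S(v))+S(\huaR_{TS(v)}u)=\huaL_{T(u)}S^2(v)+\huaR_{TS(v)}S(u)$, and then \eqref{eq:coNijpair2} at $x=T(v)$ converts $\huaR_{TS(v)}S(u)-S(\huaR_{TS(v)}u)$ into $\huaR_{T(v)}S^2(u)-S(\huaR_{T(v)}S(u))$, which yields exactly the missing identity. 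This is the role played by \eqref{eq:pn11}, the unnumbered equation following it, and \eqref{eq:pn22} in the paper's proof. (A minor further point: your identification of the inner expression with $\tilde{\huaL}_{T(u)}v+\tilde{\huaR}_{T(v)}u$ has the commutator terms with the opposite sign to \eqref{eq:rep2}; the two expressions agree only because $[\huaL_{T(u)},S]v+[\huaR_{T(v)},S]u=0$ by \eqref{eq:TN2}.) With these steps supplied your argument closes, and the final deduction that $\cdot^T_S$, $\star^T$ and $\cdot^{N\circ T}$ are pre-Lie operations is correct.
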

\begin{proof}  Since $(T,N,S)$ is an $\GRBN$-structure, by the relation $u\cdot^{N\circ T} v=u\cdot^{T}_{S} v$, one has
\begin{eqnarray}
 S(\huaL_{TS(u)}v)+S(\huaR_{T(v)}S(u))&=&\huaL_{TS(u)}S(v)+\huaR_{T(v)}S^2(u);\label{eq:pn11}\\
 S^2(\huaL_{T(u)}v)+S^2(\huaR_{T(v)}u)&=& S(\huaL_{T(u)}S(v))+S(\huaR_{T(v)}S(u)).
\end{eqnarray}
By  \eqref{eq:coNijpair2}, we have
\begin{eqnarray}
 \label{eq:pn22}\huaR_{ T(v)}S^2(u)-\huaR_{TS(v)}S(u)=S(\huaR_{ T(v)}S(u))-S(\huaR_{TS(v)}u).
\end{eqnarray}
By \eqref{eq:pn11}-\eqref{eq:pn22}, we have
\begin{eqnarray*}
  S(u)\cdot^{T}S(v)- S(u\cdot^{T}_{S}v)
  &=&\huaL_{TS(u)}S(v)+\huaR_{T S(v)}S(u)+S^2(\huaL_{T(u)}v)+S^2(\huaR_{T(v)}u)\\
  &&-S(\huaL_{T(u)}S(v))-S(\huaR_{TS(v)}u)-S(\huaL_{TS(u)}v)-S(\huaR_{T(v)}S(u))\\
&=&\huaL_{TS(u)}S(v)+\huaR_{TS(v)}S(u)+S^2(\huaL_{T(u)}v)+S^2(\huaR_{T(v)}u)\\
  &&-S(\huaL_{T(u)}S(v))-S(\huaR_{TS(v)}u)-\huaL_{TS(u)}S(v)-\huaR_{T(v)}S^2(u)\\
&=&\huaL_{TS(u)}S(v)+\huaR_{TS(v)}S(u)+ S(\huaL_{T(u)}S(v))+S(\huaR_{T(v)}S(u))\\
  &&-S(\huaL_{T(u)}S(v))-S(\huaR_{TS(v)}u)-\huaL_{T\circ S(u)}S(v)-\huaR_{T(v)}S^2(u)\\
  &=&\huaR_{TS(v)}S(u)+S(\huaR_{T(v)}S(u))-\huaR_{T(v)}S^2(u)-S(\huaR_{TS(v)}u)\\
  &=&0.
\end{eqnarray*}
Thus, $S$ is a Nijenhuis operator on the pre-Lie algebra $(V,\cdot^T)$.
\end{proof}

\begin{thm}\label{pro:TS1}
 Let $(T,N,S)$ be an $\GRBN$-structure. Then we have
  \begin{itemize}
\item[$\rm(i)$] $T$ is an $\GRB$-operator on the bimodule $(V; \tilde{\huaL},\tilde{\huaR})$ over the deformed pre-Lie algebra $(\g,\cdot_N)$;
 \item[$\rm(ii)$] $N\circ T$ is an $\GRB$-operator on the bimodule  $(V;\huaL,\huaR)$ over the pre-Lie algebra $(\g,\cdot_\g)$.
  \end{itemize}

\end{thm}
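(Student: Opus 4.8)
The plan is to prove (i) by a direct computation expanding the deformed multiplication, and then to deduce (ii) from (i) together with the Nijenhuis identity for $N$. For (i), I would start from the left-hand side of the desired $\GRB$-operator identity and expand $\cdot_N$:
\[
T(u)\cdot_N T(v)=N(T(u))\cdot_\g T(v)+T(u)\cdot_\g N(T(v))-N\big(T(u)\cdot_\g T(v)\big).
\]
Using $N\circ T=T\circ S$ from \eqref{eq:TN1}, I would replace $N(T(u))=T(S(u))$ and $N(T(v))=T(S(v))$ in the first two terms, turning them into $T(S(u))\cdot_\g T(v)$ and $T(u)\cdot_\g T(S(v))$. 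Each of these, together with the product inside the last term, is rewritten by the defining identity \eqref{O-operator} of the $\GRB$-operator $T$; since $N(T(u)\cdot_\g T(v))=N\circ T(u\cdot^T v)=T(S(u\cdot^T v))$, collecting the three resulting $T(\cdots)$ terms yields $T$ applied to
\[
\huaL_{TS(u)}v+\huaR_{T(v)}S(u)+\huaL_{T(u)}S(v)+\huaR_{TS(v)}u-S(u\cdot^T v),
\]
which is precisely $T(u\cdot^T_S v)$ by the definition \eqref{eq:defieq33} of $\cdot^T_S$. Finally, Lemma \ref{lem:mulrel} identifies $u\cdot^T_S v=u\star^T v=\tilde{\huaL}_{T(u)}v+\tilde{\huaR}_{T(v)}u$ via \eqref{eq:defieq23}, so that $T(u)\cdot_N T(v)=T(\tilde{\huaL}_{T(u)}v+\tilde{\huaR}_{T(v)}u)$, which is the $\GRB$-operator condition for $T$ on $(V;\tilde{\huaL},\tilde{\huaR})$ over $(\g,\cdot_N)$.

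For (ii), writing $P=N\circ T=T\circ S$, I would use that $N$ is a Nijenhuis operator on $(\g,\cdot_\g)$ to obtain
\[
P(u)\cdot_\g P(v)=N(T(u))\cdot_\g N(T(v))=N\big(N(T(u))\cdot_\g T(v)+T(u)\cdot_\g N(T(v))-N(T(u)\cdot_\g T(v))\big)=N\big(T(u)\cdot_N T(v)\big),
\]
since the bracketed expression is exactly $T(u)\cdot_N T(v)$. By part (i) this equals $N(T(u\cdot^T_S v))=P(u\cdot^T_S v)$. On the other hand, the right-hand side of the $\GRB$-operator identity for $P$ is $P(\huaL_{P(u)}v+\huaR_{P(v)}u)=P(u\cdot^{N\circ T}v)$, and condition \eqref{eq:TN2} gives $u\cdot^{N\circ T}v=u\cdot^T_S v$. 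Hence both sides equal $P(u\cdot^T_S v)$, and $N\circ T$ is an $\GRB$-operator on $(V;\huaL,\huaR)$ over $(\g,\cdot_\g)$.

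The computations are essentially formal bookkeeping; the only place requiring real care is the cancellation in (i), where one must verify that applying \eqref{O-operator} to the three (mixed) products of the form $T(\cdot)\cdot_\g T(\cdot)$ recombines — using $N\circ T=T\circ S$ — into exactly the expression defining $u\cdot^T_S v$. This is where condition \eqref{eq:TN1} is used directly, while \eqref{eq:TN2} enters through Lemma \ref{lem:mulrel} to pass from $\cdot^T_S$ to the operation $\star^T$ built from $(\tilde{\huaL},\tilde{\huaR})$. Thus both compatibility conditions of an $\GRBN$-structure are essential. Once (i) is in place, (ii) is an immediate consequence of it and the Nijenhuis identity for $N$, so I expect no further obstacle.
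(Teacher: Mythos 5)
Your proof is correct and is essentially the paper's argument: part (i) is the same chain of identities (expand $\cdot_N$, use $N\circ T=T\circ S$ and the $\GRB$-identity for $T$, then invoke Lemma \ref{lem:mulrel}) read in the reverse direction, and part (ii) matches the paper's one-line computation $NT(u\cdot^{N\circ T}v)=NT(u\cdot^T_S v)=N(T(u)\cdot_N T(v))=NT(u)\cdot_\g NT(v)$. No gaps.
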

\begin{proof} (i)   By Lemma \ref{lem:mulrel}, we have
\begin{eqnarray*}
{T(u\star^Tv)}&=&T(u\cdot^{T}_ {S}v)=T\big(S(u)\cdot^T v+u\cdot^T S(v)-S(u\cdot^T v)\big)\\
&=&T(S(u))\cdot_\g T(v)+T(u)\cdot_\g T(S(v))-TS(u\cdot^T v)\\
&=&N(T(u))\cdot_\g T(v)+T(u)\cdot_\g N(T(v))-N(T(u)\cdot_\g T(v))\\
&=&T(u)\cdot_N T(v).
\end{eqnarray*}
Thus, $T$ is an $\GRB$-operator on the bimodule $(V; \tilde{\huaL},\tilde{\huaR})$ over the deformed pre-Lie algebra $(\g,\cdot_N)$.

(ii) By \eqref{eq:TN2} and the fact that $N$ is a Nijenhuis operator on the pre-Lie algebra $(\g,\cdot_\g)$, we have
\begin{eqnarray*}
 NT(u\cdot^{N\circ T}v)&=&N T(u\cdot^{T}_ {S} v)=N(T(u)\cdot_N T(v))=NT(u)\cdot_\g NT(v),
\end{eqnarray*}
which implies that $N\circ T$ is an $\GRB$-operator on the bimodule  $(V;\huaL,\huaR)$ over the pre-Lie algebra $(\g,\cdot_\g)$.
\end{proof}

\subsection{$\KVN$-structures, $\HN$-structures and $\GRBN$-structures}
In the following, we give the notion of a $\KVN$-structure on a pre-Lie algebra, which consists of an $\frks$-matrix and a Nijenhuis operator on a pre-Lie algebra.  We first recall the notion of an $\frks$-matrix, which plays an important role in the theory of pre-Lie bialgebras.
\begin{defi}{\rm(\cite{Left-symmetric bialgebras})}
Let $(\g,\cdot_\g)$ be a pre-Lie algebra. An element $r\in\Sym^2(\g)$ is called an {\bf $\frks$-matrix} if $\llbracket r,r\rrbracket=0$, where $\llbracket r,r\rrbracket\in\wedge^2\g\otimes \g$ is defined as follows:
\begin{equation}\label{S-equation1}
\llbracket r,r\rrbracket(\xi,\eta,\zeta)=-\langle\xi,r^\sharp(\eta)\cdot_\g r^\sharp(\zeta)\rangle+\langle\eta,r^\sharp(\xi)\cdot_\g r^\sharp(\zeta)\rangle+\langle\zeta,[r^\sharp(\xi),r^\sharp(\eta)]_\g\rangle,\quad\forall~\xi,\eta,\zeta\in\g^*,
\end{equation}
in which $r^\sharp:\g\longrightarrow \g^*$ is a linear map induced by $\langle r^\sharp(\xi),\eta\rangle=r(\xi,\eta)$.
\end{defi}

There is a close relationship between $\frks$-matrices and $\GRB$-operators.
\begin{lem}\label{lem:assrmatrix-GRB}{\rm(\cite{Left-symmetric bialgebras})}
 Let $(\g,\cdot_\g)$ be a pre-Lie algebra.  Then $r\in\Sym^2(\g)$ is an $\frks$-matrix if and only if $r^\sharp$ is an $\GRB$-operator on the bimodule $(\g^*;\ad^*,-R^*)$ over $\g$.
\end{lem}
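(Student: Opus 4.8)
The plan is to prove the two implications of Lemma~\ref{lem:assrmatrix-GRB} by directly comparing the defining equation of the $\frks$-matrix condition $\llbracket r,r\rrbracket=0$ with the defining equation \eqref{O-operator} of an $\GRB$-operator for the specific bimodule $(\g^*;\ad^*,-R^*)$. The key observation is that $r^\sharp:\g^*\longrightarrow\g$ (note that in this setting $r\in\Sym^2(\g)$, so $r^\sharp$ maps $\g^*$ into $\g$, and it is the map $V=\g^*\to\g$ that plays the role of $T$) is an $\GRB$-operator on $(\g^*;\huaL=\ad^*,\huaR=-R^*)$ precisely when, for all $\xi,\eta\in\g^*$,
\begin{equation*}
  r^\sharp(\xi)\cdot_\g r^\sharp(\eta)=r^\sharp\big(\ad^*_{r^\sharp(\xi)}\eta-R^*_{r^\sharp(\eta)}\xi\big).
\end{equation*}

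First I would pair this candidate identity against an arbitrary $\zeta\in\g^*$ to turn the operator equation into a scalar equation in the duality pairing $\langle\cdot,\cdot\rangle$. The left-hand side becomes $\langle\zeta,r^\sharp(\xi)\cdot_\g r^\sharp(\eta)\rangle$. For the right-hand side, I would unwind the definitions $\ad^*=L^*-R^*$ and the dual-action formulas, using $\langle r^\sharp(\alpha),\beta\rangle=r(\alpha,\beta)=\langle r^\sharp(\beta),\alpha\rangle$ (the symmetry of $r$, which is what allows $r^\sharp$ to be treated self-adjointly). The term $\langle\zeta,r^\sharp(\ad^*_{r^\sharp(\xi)}\eta)\rangle$ rewrites as $\langle r^\sharp(\zeta),\ad^*_{r^\sharp(\xi)}\eta\rangle=-\langle\eta,\ad_{r^\sharp(\xi)}r^\sharp(\zeta)\rangle=-\langle\eta,[r^\sharp(\xi),r^\sharp(\zeta)]_\g\rangle$, using that $\ad^*$ is the coadjoint action of the sub-adjacent Lie algebra. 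Similarly the $-R^*$ term rewrites as $+\langle\xi,r^\sharp(\zeta)\cdot_\g r^\sharp(\eta)\rangle$. Collecting, the $\GRB$-operator equation evaluated on $\zeta$ becomes an expression that I would then match term-by-term against $\llbracket r,r\rrbracket$ as written in \eqref{S-equation1}, taking care with the symmetry/antisymmetry bookkeeping so that the $\wedge^2\g\otimes\g$ structure of $\llbracket r,r\rrbracket$ is respected.

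The main obstacle I anticipate is purely bookkeeping: reconciling the signs and the index placement between \eqref{S-equation1} and the expanded $\GRB$-operator identity, and in particular verifying that the scalar equation obtained by pairing against every $\zeta$ is genuinely equivalent to $\llbracket r,r\rrbracket=0$ rather than to some symmetrized or antisymmetrized consequence of it. Since $\llbracket r,r\rrbracket$ lives in $\wedge^2\g\otimes\g$ it is antisymmetric in its first two arguments $\xi,\eta$, whereas the raw $\GRB$-operator equation a priori gives information symmetric in a different pair; I would need to check that the $\Sym^2$-hypothesis on $r$ forces the appropriate symmetrization so that no information is lost. Once the pairing identity
\begin{equation*}
  \llbracket r,r\rrbracket(\xi,\eta,\zeta)=\big\langle\zeta,\,r^\sharp(\xi)\cdot_\g r^\sharp(\eta)-r^\sharp(\ad^*_{r^\sharp(\xi)}\eta-R^*_{r^\sharp(\eta)}\xi)\big\rangle
\end{equation*}
(or its correctly signed analogue) is established, both directions follow at once: $\llbracket r,r\rrbracket=0$ for all arguments is equivalent to the bracketed quantity vanishing for all $\xi,\eta$, which is exactly \eqref{O-operator}. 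Since the pairing $\langle\cdot,\cdot\rangle$ between $\g$ and $\g^*$ is nondegenerate, vanishing against all $\zeta$ is equivalent to vanishing of the vector itself, closing the equivalence.
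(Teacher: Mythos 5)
The paper does not actually prove this lemma---it is quoted from \cite{Left-symmetric bialgebras}---so there is no in-paper argument to compare against; your proposal is the standard direct verification and it is correct. To settle the bookkeeping you flag as the main obstacle: pairing the candidate identity $r^\sharp(\xi)\cdot_\g r^\sharp(\eta)=r^\sharp\big(\ad^*_{r^\sharp(\xi)}\eta-R^*_{r^\sharp(\eta)}\xi\big)$ against $\zeta\in\g^*$ and using the symmetry $\langle\zeta,r^\sharp(\alpha)\rangle=r(\alpha,\zeta)=\langle\alpha,r^\sharp(\zeta)\rangle$ together with $\langle\ad^*_x\eta,y\rangle=-\langle\eta,[x,y]_\g\rangle$ and $\langle R^*_x\xi,y\rangle=-\langle\xi,y\cdot_\g x\rangle$ gives exactly
\[
\Big\langle\zeta,\;r^\sharp(\xi)\cdot_\g r^\sharp(\eta)-r^\sharp\big(\ad^*_{r^\sharp(\xi)}\eta-R^*_{r^\sharp(\eta)}\xi\big)\Big\rangle
=-\langle\xi,r^\sharp(\zeta)\cdot_\g r^\sharp(\eta)\rangle+\langle\zeta,r^\sharp(\xi)\cdot_\g r^\sharp(\eta)\rangle+\langle\eta,[r^\sharp(\xi),r^\sharp(\zeta)]_\g\rangle,
\]
which is precisely $\llbracket r,r\rrbracket(\xi,\zeta,\eta)$ as defined in \eqref{S-equation1}; so your pairing identity holds with the arguments permuted to $(\xi,\zeta,\eta)$ rather than $(\xi,\eta,\zeta)$. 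Your worry about losing information to (anti)symmetrization does not materialize: the formula \eqref{S-equation1} is defined on all triples and is already antisymmetric in its first two slots as written, so ``$\llbracket r,r\rrbracket=0$'' means exactly that the scalar expression vanishes for every triple, and the permutation $(\xi,\eta,\zeta)\mapsto(\xi,\zeta,\eta)$ is a bijection on triples. Hence vanishing for all triples is equivalent to the paired $\GRB$-identity for all $\xi,\eta,\zeta$, and nondegeneracy of the pairing closes the equivalence as you say. One further point in your favor: the paper's text declares $r^\sharp:\g\rightarrow\g^*$, but for \eqref{S-equation1} to parse it must be $r^\sharp:\g^*\rightarrow\g$, which is the reading you correctly adopt.
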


\begin{defi}\label{defi:rmnij}
Let $r\in\Sym^2(\g)$ be an $\frks$-matrix and $N:\g\longrightarrow \g$ a Nijenhuis operator on a pre-Lie algebra $(\g,\cdot_\g)$.   A pair $(r,N)$ is called a {\bf $\KVN$-structure} on the pre-Lie algebra $(\g,\cdot_\g)$ if for all $\xi,\eta\in \g^*$, the following conditions are satisfied:
\begin{eqnarray}
 \label{eq:rmn1} N\circ r^\sharp&=&r^\sharp\circ N^*,\\
 \label{eq:rmn2} {\xi\cdot^{N\circ r^\sharp}\eta}&=&{\xi\cdot_{N^*}^{r^\sharp}\eta},
\end{eqnarray}
where  \eqref{eq:rmn2} is given by \eqref{eq:TN2} with $T=r^\sharp$, $S=N^*$ and the  bimodule $(\g^*;\ad^*,-R^*)$.
\end{defi}
\begin{rmk}
 The notion of a $\KVN$-structure on the  left-symmetric algebroid was introduced in \cite{WLS}, which combines a Koszul-Vinberg structure and a Nijenhuis operator on a left-symmetric algebroid satisfying some compatibility conditions. A $\KVN$-structure on the left-symmetric algebroid over a point is a $\KVN$-structure on a pre-Lie algebra.
\end{rmk}

By Lemma \ref{lem:assrmatrix-GRB} and the fact that $(N,N^*)$ is a Nijenhuis structure on the pre-Lie algebra $\g$, we have
\begin{pro}\label{thm:rNijRB}
  Let $r\in\Sym^2(\g)$ be an $\frks$-matrix and $N:\g\longrightarrow \g$ a Nijenhuis operator on a pre-Lie algebra $(\g,\cdot_\g)$. Then $(r,N)$ is a $\KVN$-structure on $(\g,\cdot_\g)$ if and only  if $(r^\sharp,N,S=N^*)$ is an $\GRBN$-structure on the bimodule $(\g^*;\ad^*,-R^*)$ over the pre-Lie algebra $(\g,\cdot_\g)$.
\end{pro}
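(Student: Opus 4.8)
The plan is to prove the equivalence by matching the defining conditions of the two structures term by term, under the dictionary $T = r^\sharp$, $S = N^*$ and $V = \g^*$ with the bimodule $(\g^*;\ad^*,-R^*)$. Unpacking Definition \ref{defi:rmnij}, a $\KVN$-structure $(r,N)$ consists of: $r$ an $\frks$-matrix, $N$ a Nijenhuis operator on $(\g,\cdot_\g)$, and the compatibility equations \eqref{eq:rmn1} and \eqref{eq:rmn2}. Unpacking the definition of an $\GRBN$-structure, the triple $(r^\sharp,N,N^*)$ consists of: $r^\sharp$ an $\GRB$-operator on $(\g^*;\ad^*,-R^*)$, the pair $(N,N^*)$ a Nijenhuis structure on the $\g$-bimodule $\g^*$, and the compatibility equations \eqref{eq:TN1} and \eqref{eq:TN2}. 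I would show these two lists are equivalent item by item.

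Two of the matches are immediate. The compatibility equations coincide verbatim: by construction in Definition \ref{defi:rmnij}, \eqref{eq:rmn1} and \eqref{eq:rmn2} are exactly \eqref{eq:TN1} and \eqref{eq:TN2} specialized to $T=r^\sharp$, $S=N^*$ on $(\g^*;\ad^*,-R^*)$. And the requirement that $r$ be an $\frks$-matrix is equivalent to $r^\sharp$ being an $\GRB$-operator on $(\g^*;\ad^*,-R^*)$, which is precisely Lemma \ref{lem:assrmatrix-GRB}. It therefore remains only to reconcile ``$N$ is a Nijenhuis operator'' with ``$(N,N^*)$ is a Nijenhuis structure''.

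This single reconciliation is where the substance lies. One direction is built into the definition: if $(N,N^*)$ is a Nijenhuis structure, then $N$ is by definition a Nijenhuis operator on $(\g,\cdot_\g)$. For the converse, I would appeal to Example \ref{ex:cNP}, which states exactly that a Nijenhuis operator $N$ always yields a Nijenhuis structure $(N,N^*)$ on $(\g^*;\ad^*,-R^*)$---that is, the conditions \eqref{eq:coNijpair1} and \eqref{eq:coNijpair2} with $S=N^*$ hold automatically. Granting this (already verified in Example \ref{ex:cNP}) together with Lemma \ref{lem:assrmatrix-GRB}, both implications of the proposition follow at once, with no further computation. The main obstacle is thus entirely absorbed into Example \ref{ex:cNP}; the present proof is a dictionary check.
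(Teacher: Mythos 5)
Your proposal is correct and matches the paper's own argument, which likewise reduces the statement to Lemma \ref{lem:assrmatrix-GRB} together with the fact (Example \ref{ex:cNP}) that $(N,N^*)$ is a Nijenhuis structure on $(\g^*;\ad^*,-R^*)$, the compatibility conditions \eqref{eq:rmn1}--\eqref{eq:rmn2} being \eqref{eq:TN1}--\eqref{eq:TN2} by definition. The paper states this in one line; your write-up is just a more explicit version of the same dictionary check.
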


\begin{defi}{\rm(\cite{NiBai})}
 A {\bf pseudo-Hessian  structure} on a pre-Lie algebra $(\g,\cdot_\g)$  is a symmetric nondegenerate $2$-cocycle $\frkB\in\Sym^2(\g^*)$, i.e. $\dt\frkB=0$. More precisely,
$$\frkB(x\cdot_\g y,z)-\frkB(x,y\cdot_\g z)=\frkB(y\cdot_\g x,z)-\frkB(y,x\cdot_\g z),\quad \forall~x,y,z\in\g.$$
\end{defi}

An element $\frkB\in\Sym^2(\g^*)$ induces a linear map $\frkB^\natural:\g\longrightarrow \g^*$ by
$$
\langle\frkB^\natural(x),y\rangle=\frkB(x,y),\quad\forall x,y\in \g.
$$
We say that $\frkB\in\Sym^2(\g^*)$ is nondegenerate if $\frkB^\natural $ is an isomorphism.

\begin{lem}{\rm(\cite{Left-symmetric bialgebras})}\label{lem:conRB}
Let $(\g,\cdot_\g)$ be a pre-Lie algebra. Then a nondegenerate $\frkB\in\Sym^2(\g^*)$ is a pseudo-Hessian  structure on $\g$ if and only if $r\in\Sym^2(\g)$  defined by
  \begin{equation}\label{eq:Connes-r}
r(\xi,\eta)=\langle \xi,(r^\sharp)^{-1}(\eta)\rangle,\quad \forall~\xi,\eta\in \g^*
\end{equation}
is an $\frks$-matrix on $\g$.
\end{lem}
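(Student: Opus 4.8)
The plan is to compute the $\frks$-matrix obstruction $\llbracket r,r\rrbracket$ directly in terms of $\frkB$ and recognize it as the cocycle condition $\dt\frkB=0$, so that both implications come out simultaneously.

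First I would set up the dictionary provided by nondegeneracy. Since $\frkB$ is nondegenerate, $\frkB^\natural:\g\longrightarrow\g^*$ is a linear isomorphism, and the element $r\in\Sym^2(\g)$ in the statement is exactly the one whose induced map is $r^\sharp=(\frkB^\natural)^{-1}:\g^*\longrightarrow\g$. I would begin by confirming $r$ really is symmetric: writing $\xi=\frkB^\natural(a)$ and $\eta=\frkB^\natural(b)$ gives $r(\xi,\eta)=\langle\xi,r^\sharp(\eta)\rangle=\langle\frkB^\natural(a),b\rangle=\frkB(a,b)$, which is symmetric in $a,b$ precisely because $\frkB$ is. The crucial structural point for what follows is that $r^\sharp$ is a bijection, so as $\xi,\eta,\zeta$ range over $\g^*$ the elements $a=r^\sharp(\xi)$, $b=r^\sharp(\eta)$, $c=r^\sharp(\zeta)$ range freely over $\g$, while $\langle\xi,-\rangle=\langle\frkB^\natural(a),-\rangle=\frkB(a,-)$, and similarly for $\eta,\zeta$.

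Next I would substitute this dictionary into the defining formula \eqref{S-equation1}. Each of the three terms turns into a value of $\frkB$: the term $-\langle\xi,r^\sharp(\eta)\cdot_\g r^\sharp(\zeta)\rangle$ becomes $-\frkB(a,b\cdot_\g c)$, the term $\langle\eta,r^\sharp(\xi)\cdot_\g r^\sharp(\zeta)\rangle$ becomes $\frkB(b,a\cdot_\g c)$, and the bracket term $\langle\zeta,[r^\sharp(\xi),r^\sharp(\eta)]_\g\rangle$ becomes $\frkB(c,a\cdot_\g b)-\frkB(c,b\cdot_\g a)$. Collecting these and using only the symmetry of $\frkB$ to move the third arguments back into the first slot, I would obtain
\begin{equation*}
\llbracket r,r\rrbracket(\xi,\eta,\zeta)=\big(\frkB(a\cdot_\g b,c)-\frkB(a,b\cdot_\g c)\big)-\big(\frkB(b\cdot_\g a,c)-\frkB(b,a\cdot_\g c)\big).
\end{equation*}

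Finally, because $r^\sharp$ is a bijection, the vanishing of the left-hand side for all $\xi,\eta,\zeta\in\g^*$ is equivalent to the vanishing of the right-hand side for all $a,b,c\in\g$, and this last identity is exactly the $2$-cocycle condition $\dt\frkB=0$ defining a pseudo-Hessian structure. I do not expect a genuine obstacle here: the content is bookkeeping, and the only points that need care are the symmetry check for $r$ and the sign conventions hidden in the pairings. Working straight from \eqref{S-equation1} is what lets me avoid the $\ad^*$/$-R^*$ signs of the dual bimodule. As an alternative one could instead invoke Lemma \ref{lem:assrmatrix-GRB} and translate the $\GRB$-operator equation for $r^\sharp=(\frkB^\natural)^{-1}$ on $(\g^*;\ad^*,-R^*)$ through $\frkB^\natural$; this yields the same cocycle identity but forces one to track the dual-representation signs explicitly, so the direct route above is the cleaner one to carry out.
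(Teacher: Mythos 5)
Your computation is correct and complete: substituting $a=r^\sharp(\xi)$, $b=r^\sharp(\eta)$, $c=r^\sharp(\zeta)$ into \eqref{S-equation1} with $r^\sharp=(\frkB^\natural)^{-1}$ does yield
$\llbracket r,r\rrbracket(\xi,\eta,\zeta)=\big(\frkB(a\cdot_\g b,c)-\frkB(a,b\cdot_\g c)\big)-\big(\frkB(b\cdot_\g a,c)-\frkB(b,a\cdot_\g c)\big)$,
and bijectivity of $r^\sharp$ makes the two vanishing conditions equivalent. Note that the paper itself offers no proof of this lemma (it is quoted from \cite{Left-symmetric bialgebras}), so there is nothing to compare against; your direct route via \eqref{S-equation1} is the standard argument and, as you say, cleanly avoids the sign bookkeeping of the dual bimodule $(\g^*;\ad^*,-R^*)$ that the alternative via Lemma \ref{lem:assrmatrix-GRB} would require.
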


Thus  $\frkB\in\Sym^2(\g^*)$  is  a pseudo-Hessian  structure on $\g$ if and only if $(\frkB^\natural)^{-1}:\g^*\longrightarrow \g$ is an $\GRB$-operator on the bimodule $(\g^*;\ad^*,-R^*)$ over $\g$.

\begin{defi}{\rm(\cite{WBLS})}
Let $\frkB$ be a pseudo-Hessian structure and $N$ a Nijenhuis operator on a pre-Lie algebra $(\g,\cdot_\g)$. Then $(\frkB,N)$ is called a {\bf pseudo-Hessian-Nijenhuis structure} (or {\bf $\HN$-structure}) on the pre-Lie algebra  $(\g,\cdot_\g)$ if for all $x,y\in\g$
\begin{equation}\label{eq:Hess1}
\frkB(N(x),y)=\frkB(x,N(y))
\end{equation} and $\frkB_N:\otimes^2 \g\longrightarrow \g $ defined by $\frkB_N(x,y)=\omega(N(x),y)$ is also $\dt$-closed, i.e.
\begin{equation}\label{eq:ConNijcon}
\frkB(x\cdot_\g y,N(z))-\frkB(x, N(y\cdot_\g z))=\frkB(y\cdot_\g
x,N(z))-\frkB(y,N(x\cdot_\g z)),\quad \forall~x,y,z\in\g.
\end{equation}
\end{defi}

There is a close relationship between  $\HN$-structures and $\GRBN$-structures.
\begin{pro}\label{pro:ConNijRB}
  Let  $\frkB\in\Sym^2(\g^*)$ be a nondegenerate bilinear form and $N:\g\longrightarrow \g$ a Nijenhuis operator on a  pre-Lie algebra $(\g,\cdot_\g)$. Then $(\frkB,N)$ is an $\HN$-structure on $(\g,\cdot_\g)$ if and only  if $((\frkB^\natural)^{-1},N,S=N^*)$ is an $\GRBN$-structure on the bimodule $(\g^*;\ad^*,-R^*)$ over the pre-Lie algebra $(\g,\cdot_\g)$.
\end{pro}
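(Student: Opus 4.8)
The plan is to reduce the statement to Proposition \ref{thm:rNijRB} by passing through the correspondence between pseudo-Hessian structures and $\frks$-matrices recorded in Lemma \ref{lem:conRB}. Let $r\in\Sym^2(\g)$ be the element determined by $r^\sharp=(\frkB^\natural)^{-1}$, so that the candidate $\GRB$-operator is $T=(\frkB^\natural)^{-1}=r^\sharp$. By Lemma \ref{lem:conRB}, $\frkB$ is a pseudo-Hessian structure if and only if $r$ is an $\frks$-matrix, and by Proposition \ref{thm:rNijRB}, once $r$ is an $\frks$-matrix the pair $(r,N)$ is a $\KVN$-structure if and only if $(r^\sharp,N,N^*)$ is an $\GRBN$-structure on $(\g^*;\ad^*,-R^*)$. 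Hence it suffices to prove that $(\frkB,N)$ is an $\HN$-structure if and only if $(r,N)$ is a $\KVN$-structure; that is, under the identification $r^\sharp=(\frkB^\natural)^{-1}$, I must match the two remaining axioms of each definition: the symmetry condition \eqref{eq:Hess1} with \eqref{eq:rmn1}, and the $\dt$-closedness condition \eqref{eq:ConNijcon} with \eqref{eq:rmn2}.

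For the symmetry conditions, I would first rewrite \eqref{eq:Hess1} using $\langle\frkB^\natural(x),y\rangle=\frkB(x,y)$ as the operator identity $\frkB^\natural\circ N=N^*\circ\frkB^\natural$ on $\g$. Substituting $\frkB^\natural=(r^\sharp)^{-1}$ and composing with $r^\sharp$ on both sides converts this into $N\circ r^\sharp=r^\sharp\circ N^*$, which is exactly \eqref{eq:rmn1}. As every manipulation here is reversible, \eqref{eq:Hess1} and \eqref{eq:rmn1} are equivalent.

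The main computation is the equivalence of the cocycle condition \eqref{eq:ConNijcon} with \eqref{eq:rmn2}. Here I would expand both sides of \eqref{eq:rmn2} explicitly for the bimodule $(\g^*;\ad^*,-R^*)$: the left-hand side gives $\xi\cdot^{N\circ r^\sharp}\eta=\ad^*_{Nr^\sharp(\xi)}\eta-R^*_{Nr^\sharp(\eta)}\xi$, while the right-hand side, via \eqref{eq:defieq33} and \eqref{eq:pre-Lie operation}, gives $N^*(\xi)\cdot^{r^\sharp}\eta+\xi\cdot^{r^\sharp}N^*(\eta)-N^*(\xi\cdot^{r^\sharp}\eta)$ with $\xi\cdot^{r^\sharp}\eta=\ad^*_{r^\sharp(\xi)}\eta-R^*_{r^\sharp(\eta)}\xi$. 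I would then pair the resulting identity in $\g^*$ against an arbitrary $z\in\g$ and substitute $\xi=\frkB^\natural(x)$, $\eta=\frkB^\natural(y)$, which is legitimate since $\frkB^\natural$ is an isomorphism. Unwinding the defining relations of $\ad^*$ and $R^*$ and using $\langle\frkB^\natural(x),y\rangle=\frkB(x,y)$ turns \eqref{eq:rmn2} into a bilinear identity in $x,y,z$ involving only $\frkB$, $N$, and $\cdot_\g$, which I expect to reorganize into \eqref{eq:ConNijcon}.

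The hard part will be this last bookkeeping: the two sides of \eqref{eq:rmn2} produce several terms with the adjoint maps $\ad^*,R^*,N^*$ acting after the substitution $\xi=\frkB^\natural(x)$, and one must regroup them so that the result collapses to the four-term $\dt$-closedness condition \eqref{eq:ConNijcon}. I anticipate needing the already-established symmetry \eqref{eq:Hess1}, the pseudo-Hessian cocycle identity $\dt\frkB=0$, and the Nijenhuis identity for $N$ as auxiliary simplifications, together with the nondegeneracy of $\frkB$ to let $x,y,z$ range freely. Since each of these reductions is reversible, establishing the identity in one direction yields the equivalence, completing the proof by the reduction above.
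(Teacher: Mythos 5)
Your plan is correct and, modulo the cosmetic detour through $\KVN$-structures (Proposition \ref{thm:rNijRB} is essentially the relabeling $T=r^\sharp$, $S=N^*$ of the $\GRBN$ axioms), it is the same argument as the paper's: Lemma \ref{lem:conRB} handles the $\GRB$-operator part, \eqref{eq:Hess1} is the operator identity $N\circ(\frkB^\natural)^{-1}=(\frkB^\natural)^{-1}\circ N^*$, and the remaining equivalence of \eqref{eq:ConNijcon} with the deformed-multiplication identity is verified exactly as you describe, by pairing against $z\in\g$ after substituting $\xi=\frkB^\natural(x)$, $\eta=\frkB^\natural(y)$. The paper's computation confirms your expectation, using only \eqref{eq:Hess1} and $\dt\frkB=0$ (the Nijenhuis identity for $N$ turns out not to be needed) to collapse the difference of the two multiplications, paired with $z$, to the four-term expression of \eqref{eq:ConNijcon}.
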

\begin{proof}  It is obvious that \eqref{eq:Hess1} is equivalent to that  $(\frkB^\natural)^{-1}\circ N^*=N\circ (\frkB^\natural)^{-1}$. By Lemma \ref{lem:conRB}, $\frkB\in\Sym^2(\g^*)$ is a pseudo-Hessian structure if and only if $(\frkB^\natural)^{-1} $ is an $\GRB$-operator on the bimodule $(\g^*;\ad^*,-R^*)$ over $\g$. Thus we only need to show that \eqref{eq:ConNijcon} is equivalent to
\begin{equation}\label{eq:ttt}\xi\cdot^{N\circ (\frkB^\natural)^{-1}}\eta= \xi\cdot^{(\frkB^\natural)^{-1}}_{N^*}\eta,\quad \forall~ \xi,\eta\in \g^*.
\end{equation}
 Assume that $\xi=\frkB^\natural(x)$ and $\eta=\frkB^\natural(y)$ for some $x,y\in \g$. For all $z\in \g$, by \eqref{eq:Hess1} and the fact that $\frkB$ is $\dt$-closed, we have
\begin{eqnarray*}
 \langle\xi\cdot^{N\circ (\frkB^\natural)^{-1}}\eta-\xi\cdot^{(\frkB^\natural)^{-1}}_{N^*}\eta,z\rangle
 &=&\frkB(x\cdot_\g z,N(y))-\frkB(x, N(z\cdot_\g y))-\frkB(z\cdot_\g
x,N(y))+\frkB(z,N(x\cdot_\g y)),
\end{eqnarray*}
which implies that \eqref{eq:ConNijcon} is equivalent to \eqref{eq:ttt}.
\end{proof}

 By Proposition \ref{thm:rNijRB} and Proposition \ref{pro:ConNijRB}, we have
\begin{cor}\label{cor:Connes-r-matrix-Nij}
  Let   $\frkB\in\Sym^2(\g^*)$ be a nondegenerate bilinear form and $N:\g\longrightarrow \g$ a Nijenhuis operator on a pre-Lie algebra $(\g,\cdot_\g)$. Then $(\frkB,N)$ is an $\HN$-structure on $(\g,\cdot_\g)$ if and only  if $(r,N)$ is a $\KVN$-structure on $(\g,\cdot_\g)$, where $r\in\Sym^2(\g)$ is given by \eqref{eq:Connes-r}.
\end{cor}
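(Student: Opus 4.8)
The plan is to route both structures through the single notion of an $\GRBN$-structure on the dual bimodule $(\g^*;\ad^*,-R^*)$, exactly as the two preceding propositions suggest. The key observation is that the map $r^\sharp$ attached to the $\frks$-matrix $r$ of \eqref{eq:Connes-r} and the map $(\frkB^\natural)^{-1}$ attached to the nondegenerate form $\frkB$ are one and the same operator $\g^*\lon\g$. Indeed, \eqref{eq:Connes-r} is precisely the identification $r^\sharp=(\frkB^\natural)^{-1}$, and Lemma \ref{lem:conRB} guarantees that under this identification $\frkB$ is a pseudo-Hessian structure if and only if $r$ is an $\frks$-matrix. Thus the background hypotheses underlying the two structures---$\frkB$ pseudo-Hessian versus $r$ an $\frks$-matrix---already coincide, and the nondegeneracy of $\frkB$ is exactly what is needed for $(\frkB^\natural)^{-1}$ to exist.

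First I would invoke Proposition \ref{pro:ConNijRB}, which asserts that $(\frkB,N)$ is an $\HN$-structure if and only if $((\frkB^\natural)^{-1},N,S=N^*)$ is an $\GRBN$-structure on $(\g^*;\ad^*,-R^*)$. Next I would invoke Proposition \ref{thm:rNijRB}, which asserts that $(r,N)$ is a $\KVN$-structure if and only if $(r^\sharp,N,S=N^*)$ is an $\GRBN$-structure on the same bimodule. Since $r^\sharp=(\frkB^\natural)^{-1}$ and both triples carry the identical Nijenhuis datum $(N,N^*)$ with the same Nijenhuis operator $N$ on $(\g,\cdot_\g)$, the two $\GRBN$-structures are literally the same object, not merely isomorphic ones. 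Chaining the two biconditionals then gives that $(\frkB,N)$ is an $\HN$-structure if and only if $(r,N)$ is a $\KVN$-structure, which is the assertion of the corollary.

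The proof is therefore a formal consequence of the two propositions together with the dictionary of Lemma \ref{lem:conRB}, and requires no new computation. The only point that needs care---and hence the main obstacle, such as it is---is the bookkeeping behind the identification $r^\sharp=(\frkB^\natural)^{-1}$, so that the common $\GRBN$-structure appearing in the two propositions is recognized as genuinely the same triple. Once this identification is made explicit, the equivalence follows immediately, so I would keep the written proof to a single sentence citing Propositions \ref{thm:rNijRB} and \ref{pro:ConNijRB} and the relation \eqref{eq:Connes-r}.
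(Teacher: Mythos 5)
Your proposal is correct and is essentially identical to the paper's proof, which likewise just chains Proposition \ref{thm:rNijRB} and Proposition \ref{pro:ConNijRB} through the common $\GRBN$-structure on $(\g^*;\ad^*,-R^*)$ under the identification $r^\sharp=(\frkB^\natural)^{-1}$ from \eqref{eq:Connes-r}. Your extra remark that Lemma \ref{lem:conRB} reconciles the background hypotheses (pseudo-Hessian versus $\frks$-matrix) is a sound piece of bookkeeping that the paper leaves implicit.
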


\begin{ex}
   Let $(\g,\cdot_\g)$ be a 2-dimensional pre-Lie algebra with a basis
$\{e_1,e_2\}$ whose non-zero products are given as follows:
$$e_2\cdot_\g e_1=-e_1,\quad e_2\cdot_\g e_2=e_2.$$
  Let $\{e_1^*,e_2^*\}$ be the dual basis. Then $(\frkB,N)$ given by
  $$\frkB=a e_1^*\otimes e_2^*+ a e_2^*\otimes e_1^*+b e_2^*\otimes e_2^*,~  a\neq 0,\quad N =\begin{bmatrix}c&
d\\ 0&c\end{bmatrix} $$
 is an $\HN$-structure on the pre-Lie algebra $(\g,\cdot_\g)$.

 Therefore, $(r,N)$ given by
  $$r=-\frac{b}{a^2} e_1\otimes e_1+\frac{1}{a} e_1\otimes e_2+ \frac{1}{a} e_2\otimes e_1,~  a\neq 0,\quad N =\begin{bmatrix}c&
d\\ 0&c\end{bmatrix} $$
 is a $\KVN$-structure on the pre-Lie algebra $(\g,\cdot_\g)$.
\end{ex}

\begin{ex}\label{ex:HN2}
    Let $(\g,\cdot_\g)$ be a 3-dimensional pre-Lie algebra
with a basis $\{e_1,e_2,e_3\}$ whose non-zero products are given
as follows:
$$e_3\cdot_\g e_2=e_2,\quad e_3\cdot_\g e_3=-e_3.$$
  Let $\{e_1^*,e_2^*,e_3^*\}$ be the dual basis. Then $(\frkB,N)$ given by
  $$\frkB=a e_1^*\otimes e_1^*+ b e_2^*\otimes e_3^*+b e_3^*\otimes e_2^*+ce_3^*\otimes e_3^*,\quad ab\neq 0,\quad N =\begin{bmatrix}d&
0&0\\ 0&e&f\\0&0&e\end{bmatrix}$$
 is an $\HN$-structure on the pre-Lie algebra $(\g,\cdot_\g)$.

 Therefore, $(r,N)$ given by
  $$r=\frac{1}{a} e_1\otimes e_1-\frac{c}{b^2} e_2\otimes e_2-b e_2\otimes e_3-b e_3\otimes e_2,\quad ab\neq 0,\quad N =\begin{bmatrix}d&
0&0\\ 0&e&f\\0&0&e\end{bmatrix}$$
 is a $\KVN$-structure on the pre-Lie algebra $(\g,\cdot_\g)$.
\end{ex}

\subsection{Compatible $\GRB$-operators and $\GRBN$-structures}
First, we recall the notion of compatible $\huaO$-operators.
\begin{defi}
Let $(\g,\cdot_\g)$ be a pre-Lie algebra and $(V;\huaL,\huaR)$  a
bimodule over $\g$. Let $T_1,T_2: V\longrightarrow \g$ be two
$\huaO$-operators on the bimodule $(V;\huaL,\huaR)$. If for any
$k_1,k_2$, $k_1T_1+k_2T_2$ is still an $\huaO$-operator, then $T_1$ and $T_2$ are called {\bf
compatible}.
\end{defi}

A pair of compatible $\huaO$-operators can  give rise to a Nijenhuis operator under some conditions.
\begin{pro}{\rm(\cite{WBLS})}\label{thm:comRBN}
Let $T_1,T_2: V\longrightarrow \g$ be two $\huaO$-operators on
a  bimodule $(V;\huaL,\huaR)$ over a pre-Lie algebra $(\g,\cdot_\g)$. Suppose that $T_2$ is invertible. If $T_1$ and $T_2$
are compatible, then $N=T_1\circ T_2^{-1}$ is a Nijenhuis operator on the pre-Lie algebra $(\g,\cdot_\g)$.
\end{pro}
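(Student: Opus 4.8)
The plan is to first unpack the compatibility hypothesis into a single bilinear identity, and then feed that identity directly into the Nijenhuis equation after using the invertibility of $T_2$ to change variables. There is no deep structure to uncover here; the whole argument is the observation that the Nijenhuis condition for $N=T_1\circ T_2^{-1}$ is built from exactly three ingredients, all of which are at hand.

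First I would expand the assumption that $k_1T_1+k_2T_2$ is an $\GRB$-operator for all scalars $k_1,k_2$. Substituting $T=k_1T_1+k_2T_2$ into the defining relation \eqref{O-operator} and collecting powers of the $k_i$, the $k_1^2$- and $k_2^2$-coefficients balance automatically, precisely because $T_1$ and $T_2$ are themselves $\GRB$-operators. Hence the entire content of compatibility is the vanishing of the $k_1k_2$-coefficient, i.e. the mixed identity
$$T_1(u)\cdot_\g T_2(v)+T_2(u)\cdot_\g T_1(v)=T_1\big(\huaL_{T_2(u)}v+\huaR_{T_2(v)}u\big)+T_2\big(\huaL_{T_1(u)}v+\huaR_{T_1(v)}u\big),\quad\forall~u,v\in V.$$
This is the only nontrivial consequence of compatibility I will use; call it $(\star)$.

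Next, since $T_2$ is invertible, I would parametrize arbitrary $x,y\in\g$ as $x=T_2(u)$, $y=T_2(v)$ with $u=T_2^{-1}(x)$, $v=T_2^{-1}(y)$, so that $N(x)=T_1(u)$ and $N(y)=T_1(v)$, and also $N\circ T_2=T_1$. The Nijenhuis condition to verify is $N(x)\cdot_\g N(y)=N\big(N(x)\cdot_\g y+x\cdot_\g N(y)-N(x\cdot_\g y)\big)$. On the left, the $\GRB$-operator identity for $T_1$ gives $N(x)\cdot_\g N(y)=T_1(u)\cdot_\g T_1(v)=T_1(\huaL_{T_1(u)}v+\huaR_{T_1(v)}u)$. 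On the right, I would compute the three inner terms: $N(x)\cdot_\g y=T_1(u)\cdot_\g T_2(v)$, $x\cdot_\g N(y)=T_2(u)\cdot_\g T_1(v)$, and, using that $T_2$ is an $\GRB$-operator followed by $N\circ T_2=T_1$, $N(x\cdot_\g y)=T_1(\huaL_{T_2(u)}v+\huaR_{T_2(v)}u)$. Substituting $(\star)$ then collapses the inner bracket to $T_2(\huaL_{T_1(u)}v+\huaR_{T_1(v)}u)$, and applying $N=T_1\circ T_2^{-1}$ converts this into $T_1(\huaL_{T_1(u)}v+\huaR_{T_1(v)}u)$, which matches the left-hand side exactly.

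I expect no serious obstacle: the verification is purely computational. The only care needed is in cleanly isolating the cross-term identity $(\star)$ from the family of $\GRB$-operator relations, and in recognizing that the three defining relations—the $\GRB$-operator identities for $T_1$ and for $T_2$, and the compatibility identity $(\star)$—are precisely the three inputs required. Invertibility of $T_2$ enters twice and is essential: once to realize every $x,y\in\g$ as images $T_2(u),T_2(v)$, and once to perform the cancellation $T_2^{-1}T_2=\Id$ in the final step.
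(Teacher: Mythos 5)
Your proof is correct. Note that the paper itself gives no proof of this proposition --- it is quoted from \cite{WBLS} --- so there is nothing to compare against line by line; but your argument is the natural and complete one: isolating the cross-term identity $(\star)$ as the full content of compatibility (the $k_1^2$- and $k_2^2$-terms indeed cancel by the $\GRB$-operator identities for $T_1$ and $T_2$, using linearity of $x\mapsto\huaL_x$ and $x\mapsto\huaR_x$), parametrizing $x=T_2(u)$, $y=T_2(v)$, and checking that both sides of the Nijenhuis identity reduce to $T_1\big(\huaL_{T_1(u)}v+\huaR_{T_1(v)}u\big)$. All steps check out.
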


Furthermore, we have
\begin{pro}{\rm(\cite{WBLS})}
Let $T_1,T_2: V\longrightarrow \g$ be two $\huaO$-operators on a bimodule $(V;\huaL,\huaR)$ over a pre-Lie algebra $(\g,\cdot_\g)$. Suppose that  $T_1$ and $T_2$ are invertible. Then
$T_1$ and $T_2$ are compatible if and only if $N=T_1\circ T_2^{-1}$ is a
Nijenhuis operator.
\end{pro}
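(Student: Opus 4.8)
The plan is to establish the nontrivial implication, since the forward direction---that compatibility of $T_1$ and $T_2$ forces $N=T_1\circ T_2^{-1}$ to be a Nijenhuis operator---is precisely Proposition \ref{thm:comRBN}. So I assume $N$ is a Nijenhuis operator and aim to recover compatibility. First I would record the algebraic content of compatibility. Since the defining equation \eqref{O-operator} is quadratic in the operator, expanding the $\GRB$-operator condition for $k_1T_1+k_2T_2$ and using that $T_1$ and $T_2$ are themselves $\GRB$-operators shows that the $k_1^2$- and $k_2^2$-coefficients vanish automatically, while the cross term must vanish for all $k_1,k_2$. Thus $T_1$ and $T_2$ are compatible if and only if
\begin{equation*}
T_1(u)\cdot_\g T_2(v)+T_2(u)\cdot_\g T_1(v)=T_1(\huaL_{T_2(u)}v+\huaR_{T_2(v)}u)+T_2(\huaL_{T_1(u)}v+\huaR_{T_1(v)}u),\quad\forall~u,v\in V.
\end{equation*}

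The second step is a change of variables. Because $T_2$ is invertible I may write every $u,v\in V$ as $u=T_2^{-1}(x)$, $v=T_2^{-1}(y)$ with $x,y\in\g$; then $T_2(u)=x$, $T_2(v)=y$ and $T_1(u)=N(x)$, $T_1(v)=N(y)$. The $\GRB$-operator condition for $T_2$ rearranges to $\huaL_x T_2^{-1}(y)+\huaR_y T_2^{-1}(x)=T_2^{-1}(x\cdot_\g y)$, so the first term on the right becomes $T_1T_2^{-1}(x\cdot_\g y)=N(x\cdot_\g y)$. Writing $w:=\huaL_{N(x)}T_2^{-1}(y)+\huaR_{N(y)}T_2^{-1}(x)$, the compatibility identity above is thus equivalent to
\begin{equation*}
N(x)\cdot_\g y+x\cdot_\g N(y)-N(x\cdot_\g y)=T_2(w),\quad\forall~x,y\in\g.
\end{equation*}

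The crucial observation is that the $\GRB$-operator condition for $T_1$, applied to the same pair $(u,v)$, reads $T_1(w)=N(x)\cdot_\g N(y)$, and this holds unconditionally---it is nothing but \eqref{O-operator} for $T_1$. Since $T_1(w)=T_1T_2^{-1}(T_2(w))=N(T_2(w))$, I conclude that $N(T_2(w))=N(x)\cdot_\g N(y)$ always. Now compare with the Nijenhuis condition $N(x)\cdot_\g N(y)=N\big(N(x)\cdot_\g y+x\cdot_\g N(y)-N(x\cdot_\g y)\big)$: it equates $N$ applied to the left side of the displayed compatibility identity with $N(T_2(w))$. The final step is to cancel $N$: as $N=T_1\circ T_2^{-1}$ is a composite of bijections it is injective, so $N(\alpha)=N(\beta)$ forces $\alpha=\beta$. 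Hence the Nijenhuis identity is equivalent to $N(x)\cdot_\g y+x\cdot_\g N(y)-N(x\cdot_\g y)=T_2(w)$, that is, to compatibility, completing the equivalence.

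I expect the main obstacle to be the bookkeeping: correctly isolating the cross term that encodes compatibility, and verifying that the change of variables turns both the compatibility identity and the Nijenhuis identity into two statements that differ only by the presence of $N$ on one side. Once that parallel structure is visible, the injectivity of $N$ does the rest; and the only place where invertibility of $T_1$ is genuinely used---beyond making $N$ well defined through $T_2^{-1}$---is in guaranteeing this injectivity, which is exactly what promotes the one-directional Proposition \ref{thm:comRBN} to an equivalence.
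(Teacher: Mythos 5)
Your argument is correct. The paper itself gives no proof of this proposition---it is quoted from \cite{WBLS}---so there is nothing internal to compare against, but your proof stands on its own. The three steps all check out: the polarization of the quadratic $\GRB$-condition correctly isolates the cross term as the content of compatibility; the substitution $u=T_2^{-1}(x)$, $v=T_2^{-1}(y)$ together with the $\GRB$-identity for $T_2$ correctly converts that cross term into $N(x)\cdot_\g y+x\cdot_\g N(y)-N(x\cdot_\g y)=T_2(w)$; and the unconditional identity $T_1(w)=N(x)\cdot_\g N(y)$, rewritten as $N(T_2(w))=N(x)\cdot_\g N(y)$, reduces the Nijenhuis condition to $N$ applied to both sides of the compatibility identity, so injectivity of $N$ closes the loop. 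One small remark: your chain of equivalences actually proves both directions simultaneously, so the appeal to Proposition \ref{thm:comRBN} for the forward implication is dispensable; and your closing observation correctly pinpoints that the invertibility of $T_1$ enters only through the injectivity of $N$, which is precisely the hypothesis that upgrades the one-directional statement to an equivalence.
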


In the following, we construct compatible $\GRB$-operators
from  an $\GRBN$-structure. Given an $\GRBN$-structure $(T,N,S)$ on a $\g$-bimodule $V$, by Theorem \ref{pro:TS1}, $T\circ S$ is an $\GRB$-operator.  In
fact, $T$ and $T\circ S$ are compatible.
\begin{pro}\label{pro:TS2}
 Let $(T,N,S)$ be an $\GRBN$-structure on a bimodule  $(V;\huaL,\huaR)$ over a pre-Lie algebra $(\g,\cdot_\g)$. Then $T$ and $T\circ S=N\circ T$ are compatible $\GRB$-operators.
\end{pro}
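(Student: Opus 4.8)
The plan is to reduce the compatibility assertion to a single bilinear (polarization) identity. Both maps are already known to be $\GRB$-operators—$T$ by hypothesis, and $N\circ T = T\circ S$ by Theorem \ref{pro:TS1}(ii)—so by definition it remains only to check that $k_1 T + k_2(N\circ T)$ satisfies the $\GRB$-operator equation \eqref{O-operator} for all $k_1,k_2\in\K$. First I would substitute the linear combination into both sides of \eqref{O-operator} and expand. The $k_1^2$-terms cancel because $T$ is an $\GRB$-operator and the $k_2^2$-terms cancel because $N\circ T$ is one; what survives is the $k_1k_2$-coefficient, so the entire proposition collapses to the cross identity
\begin{eqnarray*}
T(u)\cdot_\g (N\circ T)(v) + (N\circ T)(u)\cdot_\g T(v) &=& T\big(\huaL_{(N\circ T)(u)}v + \huaR_{(N\circ T)(v)}u\big)\\
&& + (N\circ T)\big(\huaL_{T(u)}v + \huaR_{T(v)}u\big),
\end{eqnarray*}
to be verified for all $u,v\in V$.

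To establish this identity I would treat the two sides separately. On the left, using \eqref{eq:TN1} to write $N\circ T = T\circ S$ and rearranging the definition \eqref{eq:deformbracket} of the deformed product into the form $N(x)\cdot_\g y + x\cdot_\g N(y) = x\cdot_N y + N(x\cdot_\g y)$, with $x=T(u)$ and $y=T(v)$, turns the left-hand side into $T(u)\cdot_N T(v) + N\big(T(u)\cdot_\g T(v)\big)$. On the right, the second summand is immediate from the $\GRB$-operator equation for $T$: applying $N$ to $T(\huaL_{T(u)}v+\huaR_{T(v)}u)=T(u)\cdot_\g T(v)$ gives $(N\circ T)(\huaL_{T(u)}v+\huaR_{T(v)}u)=N\big(T(u)\cdot_\g T(v)\big)$, which matches the second summand just produced on the left.

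It then remains to match $T(u)\cdot_N T(v)$ with the first right-hand summand. This is exactly the chain already available from the previous results: the computation in the proof of Theorem \ref{pro:TS1}(i) gives $T(u\star^T v)=T(u)\cdot_N T(v)$, Lemma \ref{lem:mulrel} identifies $u\star^T v = u\cdot^T_S v$, and the defining condition \eqref{eq:TN2} gives $u\cdot^T_S v = u\cdot^{N\circ T}v = \huaL_{(N\circ T)(u)}v + \huaR_{(N\circ T)(v)}u$. Concatenating these yields $T(u)\cdot_N T(v) = T\big(\huaL_{(N\circ T)(u)}v + \huaR_{(N\circ T)(v)}u\big)$, so both summands align and the cross identity holds.

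The only genuinely computational step is the polarization reduction in the first paragraph; once the cross identity is isolated, the three ingredients—the definitional rearrangement of $\cdot_N$, the $\GRB$-operator equation for $T$, and the chain through Theorem \ref{pro:TS1}(i), Lemma \ref{lem:mulrel} and \eqref{eq:TN2}—fit together with no further work. I expect the main hazard to be purely bookkeeping: keeping the two products $\cdot_\g$ and $\cdot_N$ distinct and correctly tracking which argument carries $N\circ T$ versus $T$, rather than any conceptual obstacle.
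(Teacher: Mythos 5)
Your proof is correct and follows essentially the same route as the paper: the paper likewise reduces compatibility to checking that $T+T\circ S$ is an $\GRB$-operator (i.e.\ to the bilinear cross term, which is exactly your polarization step) and discharges it using \eqref{eq:TN2}, the definition of $\cdot^T_S$, and the computation already carried out for Theorem \ref{pro:TS1}. Your rephrasing of the middle step through the deformed product $\cdot_N$ on $\g$ rather than through $\cdot^T_S$ on $V$ is only a cosmetic difference.
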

\begin{proof} It is sufficient to prove that $T+T\circ S$ is an $\GRB$-operator.
Since  $(T,N,S)$ is an $\GRBN$-structure, we have
$$u\cdot^{T+T\circ S}v=u\cdot^T v+u\cdot^{T\circ S}v=u\cdot^T v+u\cdot^T_S v.$$
Thus, we have
\begin{eqnarray*}
  (T+T\circ S)(u\cdot^{T+T\circ S}v)
  &=&T(u\cdot^T v)+ T(u\cdot^T_S v)+T\circ S(u\cdot^{T\circ S} v)+T\circ S(u\cdot^T v)\\
  &=&T(u\cdot^T v)+T\circ S(u\cdot^{T\circ S} v)+T\circ S(u\cdot^T v)\\
  &&+T\big(S(u)\cdot^T v+u\cdot^T S(v)-S(u\cdot^T v)\big)\\
  &=&T(u\cdot^T v)+T\circ S(u\cdot^{T\circ S} v)+T\big(S(u)\cdot^T v+u\cdot^T S(v)\big)\\
  &=&T(u)\cdot_\g T(v)+T( S(u))\cdot_\g T(S(v))+T(S(u))\cdot_\g T(v)+T(u)\cdot_\g T(S(v))\\
  &=&(T+T\circ S)(u)\cdot_\g(T+T\circ S)(v),
\end{eqnarray*}
which means that $T+T\circ S$ is an $\GRB$-operator.
\end{proof}

Compatible $\GRB$-operators  can give rise to $\GRBN$-structures.

\begin{pro}\label{pro:ComptoNS}
Let $T_1, T_2: V\longrightarrow \g$ be two $\GRB$-operators on a $\g$-bimodule $V$. Suppose that $T_2$ is invertible. If
$T_1$ and $T_2$ are compatible, then
 $(T_i,N=T_1\circ T_2^{-1},S=T_2^{-1}\circ T_1)$ is an $\GRBN$-structure, for $i=1,2$.
 \end{pro}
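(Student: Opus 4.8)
The plan is to verify, for each $i\in\{1,2\}$, the three ingredients in the definition of an $\GRBN$-structure on $(T_i,N,S)$: that $T_i$ is an $\GRB$-operator (a hypothesis), that $(N,S)$ is a Nijenhuis structure, and that the compatibility conditions \eqref{eq:TN1} and \eqref{eq:TN2} hold. The starting point is the pair of elementary intertwining identities
\[
N\circ T_2=T_1=T_2\circ S,\qquad N\circ T_1=T_1\circ T_2^{-1}\circ T_1=T_1\circ S,
\]
both immediate from $N=T_1\circ T_2^{-1}$ and $S=T_2^{-1}\circ T_1$; these are exactly \eqref{eq:TN1} for $i=2$ and $i=1$. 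In particular $T_1=N\circ T_2=T_2\circ S$ and $T_i\circ S=N\circ T_i$, which I will use repeatedly.

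The computational core is the single identity
\[
S(u\cdot^{T_i}v)=\huaL_{T_i(u)}S(v)+\huaR_{T_i(v)}S(u),\qquad i=1,2,\ \forall\,u,v\in V,
\]
which I denote $(\star_i)$. Granting $(\star_i)$, condition \eqref{eq:TN2} drops out: expanding $u\cdot^{T_i}_Sv=S(u)\cdot^{T_i}v+u\cdot^{T_i}S(v)-S(u\cdot^{T_i}v)$ and replacing $T_i\circ S$ by $N\circ T_i$, the surviving terms are $\huaL_{NT_i(u)}v+\huaR_{NT_i(v)}u=u\cdot^{N\circ T_i}v$, while the remaining terms $\huaL_{T_i(u)}S(v)+\huaR_{T_i(v)}S(u)-S(u\cdot^{T_i}v)$ vanish by $(\star_i)$. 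To prove $(\star_i)$ itself I use that $T_2$ is injective with $T_2\circ S=T_1$, so $(\star_i)$ is equivalent to the identity $T_1(u\cdot^{T_i}v)=T_2\bigl(\huaL_{T_i(u)}S(v)+\huaR_{T_i(v)}S(u)\bigr)$ in $\g$. For $i=1$ this follows by writing $T_1(u)\cdot_\g T_1(v)=T_2(S(u))\cdot_\g T_2(S(v))$ and applying the $\GRB$-operator identity of $T_2$; for $i=2$ one expands $T_1(u)\cdot_\g T_2(v)+T_2(u)\cdot_\g T_1(v)$ with the $\GRB$-identity of $T_2$ and subtracts the compatibility identity of the pair $(T_1,T_2)$ (obtained by expanding that $k_1T_1+k_2T_2$ is an $\GRB$-operator). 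Thus compatibility is used precisely, and only, to establish $(\star_2)$.

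It remains to show $(N,S)$ is a Nijenhuis structure, i.e. that $N$ is a Nijenhuis operator and that \eqref{eq:coNijpair1}, \eqref{eq:coNijpair2} hold. The first is Proposition~\ref{thm:comRBN}. For the second, since $T_2$ is surjective it suffices to check them at $x=T_2(w)$, $w\in V$, where $N(x)=T_1(w)$; set $v_1=S(v)$, so that $T_1(v)=T_2(v_1)$ and $S^2(v)=S(v_1)$. To verify \eqref{eq:coNijpair2} I would first use $(\star_1)$ at the arguments $(v,w)$ to rewrite the left-hand term $\huaR_{T_1(w)}S(v)$, then substitute the intertwining relation $T_1(v)=T_2(v_1)$, and finally close the identity with $(\star_2)$ at the arguments $(v_1,w)$; after the evident cancellations this collapses exactly to \eqref{eq:coNijpair2}. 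Condition \eqref{eq:coNijpair1} is verified by the same scheme with the roles of $\huaL$ and $\huaR$ interchanged.

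I expect the main obstacle to be this last step. The conditions \eqref{eq:coNijpair1}--\eqref{eq:coNijpair2} carry genuinely new information---they are not formal consequences of \eqref{eq:TN1} and \eqref{eq:TN2}---and they do not decouple the $\huaL$- and $\huaR$-parts of $(\star_i)$. The device that makes them fall out is restricting $x$ to the image of $T_2$ and then feeding the two identities $(\star_1)$ and $(\star_2)$ the transposed arguments $(v,w)$ and $(S(v),w)$, glued by the conjugacy $T_1=T_2\circ S=N\circ T_2$; it is exactly here that the invertibility of $T_2$ is indispensable.
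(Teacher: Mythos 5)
Your proof is correct and follows essentially the same route as the paper: your identities $(\star_2)$ and $(\star_1)$ are exactly the paper's equations \eqref{eq:ON3} and \eqref{eq:ON4} (equivalently \eqref{eq:TSN6}), derived in the same way from compatibility plus the $\GRB$-identity of $T_2$ and from $T_1=T_2\circ S$ being an $\GRB$-operator, respectively. The paper likewise obtains \eqref{eq:coNijpair1}--\eqref{eq:coNijpair2} by specializing $(\star_2)$ at shifted arguments (its \eqref{eq:TNS7}--\eqref{eq:TNS8}) and combining with $(\star_1)$, and deduces \eqref{eq:TN2} for $T_2$ and $T_1$ from \eqref{eq:ON3} and \eqref{eq:ON4} just as you do.
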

\begin{proof}  First we prove that $(N=T_1\circ T_2^{-1},S=T_2^{-1}\circ T_1)$ is a Nijenhuis structure on the $\g$-bimodule $V$. By Proposition \ref{thm:comRBN}, $N=T_1\circ T_2^{-1}$ is a Nijenhuis operator on the pre-Lie algebra $(\g,\cdot_\g)$. Since $T_1$ and $T_2$ are compatible, for all~$u,v\in V$, we  have
\begin{eqnarray*}
  T_2(u)\cdot_\g T_1(v)+ T_1(u)\cdot_\g
 T_2(v)=T_2\big(\huaL_{T_1(u)}v+\huaR_{T_1(v)}u\big) +T_1\big(\huaL_{T_2(u)}v+\huaR_{T_2(v)}u\big).
\end{eqnarray*}
Substituting $T_1$ by $T_2\circ S$, then we have
\begin{eqnarray*}
  T_2(u)\cdot_\g T_2(S(v))+ T_2(S(u))\cdot_\g
 T_2(v)=T_2\big(\huaL_{T_2(S(u))}v+\huaR_{T_2(S(v))}u\big) +T_2S\big(\huaL_{T_2(u)}v+\huaR_{T_2(v)}u\big).
\end{eqnarray*}
Since $T_2$ is an $\GRB$-operator, we have
\begin{eqnarray*}
 T_2(u)\cdot_\g T_2(S(v))+ T_2(S(u))\cdot_\g
 T_2(v)=T_2\big(\huaL_{T_2(u)}S(v)+\huaR_{T_2(S(v))}u+\huaL_{T_2(S(u))}v+\huaR_{T_2(v)}S(u)\big).
\end{eqnarray*}
Since $T_2$ is invertible, we obtain
\begin{eqnarray}\label{eq:ON3}
  S\big(\huaL_{T_2(u)}v+\huaR_{T_2(v)}u\big)=\huaL_{T_2(u)}S(v)+\huaR_{T_2(v)}S(u).
\end{eqnarray}
Therefore, we have
\begin{eqnarray}
\label{eq:TNS7}\huaL_{T_2(u)}S^2(v)-S\big(\huaL_{T_2(u)}S(v)\big)&=&S\big(\huaR_{T_2S(v)}u\big)-\huaR_{T_2S(v)}S(u),\\
 \label{eq:TNS8}S\big(\huaL_{T_2 S(u)}v\big)-\huaL_{T_2S(u)}S(v)&=&\huaR_{T_2(v)}S^2(u)-S\big(\huaR_{T_2(v)}S(u)\big).
\end{eqnarray}
Furthermore, since $T_2$ and $T_2\circ S$ are $\GRB$-operators, we have
$$T_2\circ S(u\cdot^{T_2\circ S}v)=T_2( S(u))\cdot_\g T_2(S(v))=T_2(S(u)\cdot^{T_2} S(v)).$$
Since $T_2$ is invertible, we have
\begin{equation}\label{eq:ON4}
S(u\cdot^{T_2\circ S}v)=S(u)\cdot^{T_2} S(v),
\end{equation}
which means that
\begin{eqnarray}\label{eq:TSN6}
  S\Big(\huaL_{T_2S(u)}v+\huaR_{T_2S(v)}u\Big)=\huaL_{T_2 S(u)}S(v)+\huaR_{T_2 S(v)}S(u).
\end{eqnarray}
By  \eqref{eq:TNS7} and \eqref{eq:TSN6}, one has
\begin{eqnarray*}
 0&=&\huaL_{T_2(u)}S^2(v)+ S\big(\huaL_{T_2S(u)}v\big)-S\big(\huaL_{T_2(u)}S(v)\big)-\huaL_{T_2S(u)}S(v)\\
 &=&\huaL_{T_2(u)}S^2(v)+ S\big(\huaL_{NT_2(u)}v\big)-S\big(\huaL_{T_2(u)}S(v)\big)-\huaL_{NT_2(u)}S(v).
\end{eqnarray*}
Since $T_2$ is invertible and let $x=T_2(u)$, we have
$$S\big(\huaL_xS(v)\big)-\huaL_xS^2(v)- S\big(\huaL_{N(x)}v\big)+\huaL_{N(x)}S(v)=0,$$
which implies that \eqref{eq:coNijpair1} holds

Similarly, by \eqref{eq:TNS8} and \eqref{eq:TSN6}, one has
\begin{eqnarray*}
 0&=&\huaR_{T_2(v)}S^2(u)+ S\big(\huaR_{T_2S(v)}u\big)-S\big(\huaR_{T_2(v)}S(u)\big)-\huaR_{T_2S(v)}S(u)\\
 &=&\huaR_{T_2(v)}S^2(u)+ S\big(\huaR_{N T_2(v)}u\big)-S\big(\huaR_{T_2(v)}S(u)\big)-\huaR_{NT_2(v)}S(u).
\end{eqnarray*}
Since $T_2$ is invertible and let $x=T_2(v)$, we have
$$S\big(\huaR_xS(u)\big)-\huaR_xS^2(u)- S\big(\huaR_{N(x)}u\big)+\huaR_{N(x)}S(u)=0,$$
which implies that \eqref{eq:coNijpair2} holds. Thus  $(N,S)$ is a Nijenhuis structure.

(i) It is obvious that $T_2\circ S=N\circ T_2=T_1$.
By \eqref{eq:ON3}, we have
\begin{eqnarray*}
{u\cdot^{T_2}_{S}v}-{u\cdot^{T_2\circ S}v}=\huaL_{T_2(u)}S(v)+\huaR_{T_2(v)}S(u)-S\big(\huaL_{T_2(u)}v+\huaR_{T_2(v)}u\big)=0,
\end{eqnarray*}
which  implies that $ {u\cdot^{T_2}_{S}v}={u\cdot^{T_2\circ S}v}$.
Therefore, $(T_2,N=T_1\circ T_2^{-1},S=T_2^{-1}\circ T_1)$ is an $\GRBN$-structure.

(ii) By \eqref{eq:ON4}, we have
\begin{eqnarray*}
 {u\cdot^{T_1}_{S}v}-{u\cdot^{T_1\circ S}v}&=&\huaL_{T_1(u)}S(v)+\huaR_{T_1(v)}S(u)-S\big(\huaL_{T_1(u)}v+\huaR_{T_1(v)}u\big)\\
  &=&\huaL_{T_2(S(u))}S(v)+\huaR_{T_2(S(v))}S(u)-S\big(\huaL_{T_2(S(u))}v+\huaR_{T_2(S(v))}u\big)\\
  &=&S(u)\cdot^{T_2} S(v)-S(u\cdot^{T_2\circ S}v)=0.
\end{eqnarray*}
Thus, $(T_1,N=T_1\circ T_2^{-1},S=T_2^{-1}\circ T_1)$ is  an $\GRBN$-structure.
\end{proof}

 In the sequel, we show that an $\GRBN$-structure on
a $\g$-bimodule $V$ gives rise to a hierarchy of compatible $\GRB$-operators.

\begin{lem}
 Let $(T,N,S)$ be an $\GRBN$-structure on
a $\g$-bimodule $V$.  Then for all $k,i\in\Nat$, we have
\begin{eqnarray}
\label{eq:TSN3}T_k(u\cdot^T_{S^{k+i}}v)&=& T_k(u)\cdot_{N^i}T_k(v);\\
\label{eq:TSN4}  u\cdot^{T_k}v&=& u\cdot^{T}_{S^k}v=S^{k-i}(u\cdot^{T_i}v),
\end{eqnarray}
where $T_k=T\circ S^k=N^k\circ T$ and set $T_0=T$.
\end{lem}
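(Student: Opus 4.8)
The plan is to reduce the whole statement to one computational identity and then propagate it through the Nijenhuis-deformation calculus of Lemma \ref{lem:Niejproperty}. Two facts drive everything. First, by Proposition \ref{pro:S-Nijenhuis operator}, $S$ is a Nijenhuis operator on the pre-Lie algebra $(V,\cdot^T)$, so the deformed multiplications $\cdot^T_{S^k}$ are all pre-Lie and satisfy the transitivity $(\cdot^T_{S^k})_{S^i}=\cdot^T_{S^{k+i}}$ (Lemma \ref{lem:Niejproperty}(iii)). Second, the Nijenhuis-structure conditions \eqref{eq:coNijpair1} and \eqref{eq:coNijpair2} are exactly the operator identities $[\huaL_{N(x)},S]=[\huaL_x,S]\circ S$ and $[\huaR_{N(x)},S]=[\huaR_x,S]\circ S$; iterating gives $[\huaL_{N^k(x)},S]=[\huaL_x,S]\circ S^k$ and the analogue for $\huaR$, for all $x\in\g$ and $k\in\Nat$. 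The crux is the single identity $\cdot^{T_{k+1}}=(\cdot^{T_k})_S$, i.e.
\[
u\cdot^{T_{k+1}}v=S(u)\cdot^{T_k}v+u\cdot^{T_k}S(v)-S(u\cdot^{T_k}v),\qquad\forall~u,v\in V.
\]

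I would prove this identity directly. Expanding its right-hand side with $T_k=N^k\circ T=T\circ S^k$ and the definition $u\cdot^{T_k}v=\huaL_{N^kT(u)}v+\huaR_{N^kT(v)}u$ collapses it to $u\cdot^{T_{k+1}}v$ plus the error term $[\huaL_{N^kT(u)},S](v)+[\huaR_{N^kT(v)},S](u)$. By the iterated commutator identities this error equals $[\huaL_{T(u)},S]S^k(v)+[\huaR_{T(v)},S]S^k(u)$, which is precisely the expanded form of the compatibility \eqref{eq:TN2}---namely $[\huaL_{T(a)},S](b)+[\huaR_{T(b)},S](a)=0$ for all $a,b$---evaluated at $a=S^k(u)$, $b=v$, after rewriting $[\huaL_{N^kT(u)},S]=[\huaL_{T(S^k u)},S]$ via \eqref{eq:TN1}. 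This self-contained calculation uses only the original $\GRBN$-data, and it is where I expect the genuine work to lie; everything else is formal.

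Granting the core identity, the first equality of \eqref{eq:TSN4}, $\cdot^{T_k}=\cdot^T_{S^k}$, follows by induction on $k$: trivially for $k=0$, and $\cdot^{T_{k+1}}=(\cdot^{T_k})_S=(\cdot^T_{S^k})_S=\cdot^T_{S^{k+1}}$ using the hypothesis and Lemma \ref{lem:Niejproperty}(iii). Read at the level of the triple, the core identity is condition \eqref{eq:TN2} for $(T_k,N,S)$; together with the trivial $N\circ T_k=T_k\circ S$ and with $T_k$ being an $\huaO$-operator---obtained inductively from Theorem \ref{pro:TS1}(ii), since $T_{k+1}=N\circ T_k$---it shows $(T_k,N,S)$ is an $\GRBN$-structure for every $k$. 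To reach \eqref{eq:TSN3} I would next note that $(N^i,S^i)$ is again a Nijenhuis structure: a Nijenhuis structure is the same as a Nijenhuis operator $N+S^*$ on $\g\ltimes_{\huaL^*-\huaR^*,-\huaR^*}V^*$, and $(N+S^*)^i=N^i+(S^i)^*$ is a power of one, hence Nijenhuis by Lemma \ref{lem:Niejproperty}(ii). Then $(T_k,N^i,S^i)$ is an $\GRBN$-structure as well, its condition \eqref{eq:TN2} now reading $\cdot^{T_{k+i}}=(\cdot^{T_k})_{S^i}$, which holds by the first equality and Lemma \ref{lem:Niejproperty}(iii). Applying Theorem \ref{pro:TS1}(i) to $(T_k,N^i,S^i)$ exhibits $T_k$ as an $\huaO$-operator over $(\g,\cdot_{N^i})$ whose induced multiplication is $(\cdot^{T_k})_{S^i}=\cdot^T_{S^{k+i}}$ (Lemma \ref{lem:mulrel}); unwinding the $\huaO$-operator equation yields exactly $T_k(u\cdot^T_{S^{k+i}}v)=T_k(u)\cdot_{N^i}T_k(v)$, which is \eqref{eq:TSN3}.

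The remaining equality of \eqref{eq:TSN4} is the transitivity $\cdot^T_{S^k}=(\cdot^{T_i})_{S^{k-i}}$ for $0\le i\le k$, immediate from Lemma \ref{lem:Niejproperty}(iii) once $\cdot^{T_i}=\cdot^T_{S^i}$ is in hand; here the symbol $S^{k-i}$ must be understood as the deformation parameter acting on $\cdot^{T_i}$ rather than as a plain post-composition, since the natural homomorphism of Lemma \ref{lem:Niejproperty}(v) runs in the opposite direction, $S^{l}:(V,\cdot^T_{S^{m+l}})\to(V,\cdot^T_{S^m})$. Apart from the one honest computation in the core identity, the only real difficulty is bookkeeping---consistently tracking which deformed bimodule and which of the multiplications $\cdot^{T_k}$, $\cdot^T_{S^k}$, $\cdot_{N^i}$ each expression lives in.
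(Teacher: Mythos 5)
Your proof is correct, and it is worth noting that the paper states this lemma with no proof at all, so your argument actually supplies the missing details. The one genuinely computational step --- the identity $\cdot^{T_{k+1}}=(\cdot^{T_k})_S$ --- is handled correctly: reading \eqref{eq:coNijpair1}--\eqref{eq:coNijpair2} as $[\huaL_{N(x)},S]=[\huaL_x,S]\circ S$ (and likewise for $\huaR$), iterating to $[\huaL_{N^k(x)},S]=[\huaL_x,S]\circ S^k$, and observing that \eqref{eq:TN1} plus \eqref{eq:TN2} is exactly $[\huaL_{T(a)},S](b)+[\huaR_{T(b)},S](a)=0$ does kill the error term $[\huaL_{N^kT(u)},S](v)+[\huaR_{N^kT(v)},S](u)$, and from there the induction for $\cdot^{T_k}=\cdot^T_{S^k}$ via Proposition \ref{pro:S-Nijenhuis operator} and Lemma \ref{lem:Niejproperty}(iii) is routine. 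For \eqref{eq:TSN3} your detour through the statement that $(T_k,N^i,S^i)$ is again an $\GRBN$-structure (using $(N+S^*)^i$ Nijenhuis and Theorem \ref{pro:TS1}) is valid but heavier than necessary: once you know $T_k$ is an $\huaO$-operator and $\cdot^T_{S^{k+i}}=(\cdot^{T_k})_{S^i}$, the three-line computation $T_k\bigl(S^i(u)\cdot^{T_k}v+u\cdot^{T_k}S^i(v)-S^i(u\cdot^{T_k}v)\bigr)=N^iT_k(u)\cdot_\g T_k(v)+T_k(u)\cdot_\g N^iT_k(v)-N^i\bigl(T_k(u)\cdot_\g T_k(v)\bigr)=T_k(u)\cdot_{N^i}T_k(v)$ gives \eqref{eq:TSN3} directly, which is visibly the same computation as the proof of Theorem \ref{pro:TS1}(i). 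Finally, you are right to flag the notation in \eqref{eq:TSN4}: the literal reading $S^{k-i}(u\cdot^{T_i}v)$ as post-composition with the map $S^{k-i}$ is not a consequence of the axioms (it would force $u\cdot^{T_1}v=S(u\cdot^Tv)$, i.e.\ $\huaL_{TS(u)}v+\huaR_{TS(v)}u=\huaL_{T(u)}S(v)+\huaR_{T(v)}S(u)$, which does not follow), whereas your reading $u\cdot^{T_i}_{S^{k-i}}v$ is exactly what the proof of Theorem \ref{thm:hierarchy} uses, so your interpretation is the intended one.
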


\begin{thm}\label{thm:hierarchy}
Let $(T,N,S)$ be an $\GRBN$-structure on a $\g$-bimodule $V$. Then all $T_k=N^k\circ T$ are $\GRB$-operators. Furthermore, for all $k,l\in\Nat$, $T_k$ and $T_l$ are compatible.
\end{thm}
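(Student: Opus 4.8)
The plan is to handle the two claims in turn, using the identities \eqref{eq:TSN3} and \eqref{eq:TSN4} of the preceding lemma as the main engine. First I would show that each $T_k=N^k\circ T=T\circ S^k$ is an $\GRB$-operator. By \eqref{eq:TSN4} one has $u\cdot^{T_k}v=u\cdot^T_{S^k}v$ for all $u,v\in V$, and specializing \eqref{eq:TSN3} to $i=0$ gives $T_k(u\cdot^T_{S^k}v)=T_k(u)\cdot_{N^0}T_k(v)$. Since $N^0=\Id$, the deformed product $\cdot_{N^0}$ is just $\cdot_\g$ by \eqref{eq:deformbracket}, whence $T_k(u\cdot^{T_k}v)=T_k(u)\cdot_\g T_k(v)$; recalling $u\cdot^{T_k}v=\huaL_{T_k(u)}v+\huaR_{T_k(v)}u$, this is precisely the defining identity \eqref{O-operator}, so every $T_k$ is an $\GRB$-operator.

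For the compatibility assertion I would first reduce to a single bilinear cross-term identity. As the $\GRB$-operator condition is quadratic in the operator, for arbitrary scalars $\lambda,\mu$ I would compare $(\lambda T_k+\mu T_l)(u)\cdot_\g(\lambda T_k+\mu T_l)(v)$ with $(\lambda T_k+\mu T_l)\big(\huaL_{(\lambda T_k+\mu T_l)(u)}v+\huaR_{(\lambda T_k+\mu T_l)(v)}u\big)$; since $T_k$ and $T_l$ are already $\GRB$-operators, the $\lambda^2$- and $\mu^2$-contributions match automatically, so $\lambda T_k+\mu T_l$ is an $\GRB$-operator for all $\lambda,\mu$ if and only if
\begin{equation*}
T_k(u)\cdot_\g T_l(v)+T_l(u)\cdot_\g T_k(v)=T_k(u\cdot^{T_l}v)+T_l(u\cdot^{T_k}v),\qquad\forall\,u,v\in V.
\end{equation*}
This identity is symmetric under $k\leftrightarrow l$, so I may assume $k\le l$ and set $m=l-k\ge 0$.

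To verify it I would evaluate the two right-hand summands by different routes. For the first, $u\cdot^{T_l}v=u\cdot^T_{S^l}v$ and then \eqref{eq:TSN3} with $i=m$ (so that $k+i=l$) gives $T_k(u\cdot^{T_l}v)=T_k(u)\cdot_{N^m}T_k(v)$. For the second, I would use $T_l=N^{m}\circ T_k$ together with the $\GRB$-property of $T_k$ just established to write $T_l(u\cdot^{T_k}v)=N^m\big(T_k(u)\cdot_\g T_k(v)\big)$. Putting $a=T_k(u)$, $b=T_k(v)$ and expanding $a\cdot_{N^m}b$ by \eqref{eq:deformbracket}, the sum of the two summands becomes $N^m(a)\cdot_\g b+a\cdot_\g N^m(b)$, the $N^m(a\cdot_\g b)$ terms cancelling. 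Finally $N^m(a)=N^m N^k T(u)=N^l T(u)=T_l(u)$ and similarly $N^m(b)=T_l(v)$, so the right-hand side equals $T_l(u)\cdot_\g T_k(v)+T_k(u)\cdot_\g T_l(v)$, i.e.\ the left-hand side; this proves the cross-term identity and hence that $T_k$ and $T_l$ are compatible.

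The individual manipulations are routine; the crux I expect is the asymmetric treatment of the two cross-terms after fixing $k\le l$ — reading the first directly off \eqref{eq:TSN3} as a $\cdot_{N^m}$-product, but routing the second through the relation $T_l=N^m\circ T_k$ and the first part's $\GRB$-property — and then arranging the deformed-product bookkeeping so that the $N^m(a\cdot_\g b)$ contributions cancel and leave exactly the desired left-hand side.
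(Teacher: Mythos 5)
Your proposal is correct and follows essentially the same route as the paper: both parts rest on the identities \eqref{eq:TSN3} and \eqref{eq:TSN4}, the first claim is obtained by specializing to $i=0$, and compatibility is reduced to the bilinear cross-term identity (equivalently, to $T_k+T_l$ being an $\GRB$-operator), which is then verified by the same cancellation. The only cosmetic difference is that you carry out that cancellation on the $\g$-side, expanding $T_k(u)\cdot_{N^{l-k}}T_k(v)$ against $N^{l-k}\bigl(T_k(u)\cdot_\g T_k(v)\bigr)$, whereas the paper works on the $V$-side with the deformed product $u\cdot^{T_k}_{S^{i}}v$; under $T_k\circ S^{i}=N^{i}\circ T_k$ these are the same terms.
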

\begin{proof} By  \eqref{eq:TSN3} and \eqref{eq:TSN4} with $i=0$, we have
\begin{eqnarray*}
 T_k(u\cdot^{T_{k}}v)= T_k(u)\cdot_\g T_k(v),
\end{eqnarray*}
which implies that $T_k$ is an $\GRB$-operator.
For the second conclusion, we need  to prove that $T_k+T_{k+i}$ are $\GRB$-operators for all $k,i\in\Nat$.
By  \eqref{eq:TSN4}, we have
$$u\cdot^{T_k+T_{k+i}}v=u\cdot^{T_k}v+u\cdot^{T_{k+i}}v=u\cdot^{T_k}v+u\cdot^{T_k}_{S^i}v.$$
Thus, we have
\begin{eqnarray*}
  (T_k+T_{k+i})(u\cdot^{T_k+T_{k+i}}v)
  &=&T_k(u\cdot^{T_k}v)+T_k(u\cdot^{T_k}_{S^i}v)+T_{k+i}(u\cdot^{T_k}v)+T_{k+i}(u\cdot^{T_k}_{S^i}v)\\
  &=&T_k(u\cdot^{T_k}v)+T_{k+i}(u\cdot^{T_k}v)+T_{k+i}(u\cdot^{T_k}_{S^i}v)\\
  &&+T_k\big(S^i(u)\cdot^{T_k}v+u\cdot^{T_k}S^i(v)-S^i(u\cdot^{T_k}v)\big)\\
  &=&T_k(u\cdot^{T_k}v)+T_{k+i}(u\cdot^{T_k}_{S^i}v)+T_k\big(S^i(u)\cdot^{T_k}v+u\cdot^{T_k}S^i(v)\big)\\
  &=&T_k(u)\cdot_\g T_k(v)+T_{k+i}(u)\cdot_\g T_{k+i}(v)+T_{k+i}(u)\cdot_\g T_k(v)+T_k(u)\cdot_\g T_{k+i}(v)\\
  &=&(T_k+T_{k+i})(u)\cdot_\g (T_k+T_{k+i})(v),
\end{eqnarray*}
which implies that $T_k+T_{k+i}$ are $\GRB$-operators.
\end{proof}

By Theorem \ref{thm:hierarchy} and Proposition \ref{thm:rNijRB}, we have
\begin{cor}
  Let $(r,N)$ be a $\KVN$-structure on pre-Lie algebra $(\g,\cdot_\g)$. Then for all $k\in\Nat$, $r_k\in\Sym^2(\g)$ defined by $r_k(\xi,\eta)=\langle N^k r^\sharp(\xi),\eta\rangle$ for all $\xi,\eta\in \g^*$, are pairwise compatible $\frks$-matrices in the sense that any linear combination of $r_k$ and $  r_l$ is still an $\frks$-matrix.
\end{cor}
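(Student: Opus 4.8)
The plan is to transport the statement into the language of $\GRB$-operators through the sharp map and then feed it into the hierarchy theorem already established for $\GRBN$-structures. First I would identify the linear map $r_k^\sharp:\g^*\lon\g$ induced by $r_k$ via $\langle r_k^\sharp(\xi),\eta\rangle=r_k(\xi,\eta)$: since $\langle N^k r^\sharp(\xi),\eta\rangle=r_k(\xi,\eta)$ holds for every $\eta\in\g^*$, we get $r_k^\sharp=N^k\circ r^\sharp$, which is exactly the operator $T_k$ of Theorem \ref{thm:hierarchy} when one takes $T=r^\sharp$ and $S=N^*$. Because $(r,N)$ is a $\KVN$-structure, Proposition \ref{thm:rNijRB} guarantees that $(r^\sharp,N,S=N^*)$ is an $\GRBN$-structure on the bimodule $(\g^*;\ad^*,-R^*)$, so Theorem \ref{thm:hierarchy} applies and tells us at once that each $T_k=N^k\circ r^\sharp$ is an $\GRB$-operator on $(\g^*;\ad^*,-R^*)$ and that $T_k$ and $T_l$ are compatible for all $k,l\in\Nat$.

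Before invoking Lemma \ref{lem:assrmatrix-GRB} I must check that each $r_k$ genuinely lies in $\Sym^2(\g)$, i.e. that $r_k$ is symmetric; this is the one point that requires real input, and it is precisely where the first $\KVN$-compatibility condition \eqref{eq:rmn1} (equivalently \eqref{eq:TN1}), $N\circ r^\sharp=r^\sharp\circ N^*$, is used. Iterating it gives $N^k\circ r^\sharp=r^\sharp\circ(N^*)^k$, whence, using the symmetry of $r$ and the adjoint identity $\langle r^\sharp(\eta),(N^*)^k\xi\rangle=\langle N^k r^\sharp(\eta),\xi\rangle$,
\[
r_k(\xi,\eta)=\langle r^\sharp((N^*)^k\xi),\eta\rangle=r((N^*)^k\xi,\eta)=r(\eta,(N^*)^k\xi)=\langle N^k r^\sharp(\eta),\xi\rangle=r_k(\eta,\xi).
\]
Thus $r_k\in\Sym^2(\g)$, and consequently every linear combination $k_1 r_k+k_2 r_l$ is symmetric as well.

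Finally I would conclude with Lemma \ref{lem:assrmatrix-GRB}: since $r_k^\sharp=T_k$ is an $\GRB$-operator on $(\g^*;\ad^*,-R^*)$, the symmetric tensor $r_k$ is an $\frks$-matrix; and since $(k_1 r_k+k_2 r_l)^\sharp=k_1 T_k+k_2 T_l$ is again an $\GRB$-operator by the compatibility supplied by Theorem \ref{thm:hierarchy}, the symmetric tensor $k_1 r_k+k_2 r_l$ is an $\frks$-matrix for all scalars $k_1,k_2$. This is exactly pairwise compatibility of the family $\{r_k\}$. The only genuine obstacle is the symmetry verification of the second paragraph, a short computation driven by \eqref{eq:rmn1}; all the remaining content is a direct translation through the sharp map of facts already proved for $\GRBN$-structures.
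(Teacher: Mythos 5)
Your proposal is correct and follows essentially the same route as the paper, which proves this corollary by simply citing Proposition \ref{thm:rNijRB} (to convert the $\KVN$-structure into the $\GRBN$-structure $(r^\sharp,N,N^*)$) together with Theorem \ref{thm:hierarchy} (to get the compatible family $T_k=N^k\circ r^\sharp$) and then translating back via Lemma \ref{lem:assrmatrix-GRB}. Your explicit verification that each $r_k$ is symmetric, via iterating $N\circ r^\sharp=r^\sharp\circ N^*$, is a worthwhile detail that the paper leaves implicit.
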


\begin{pro}\label{pro:GRBN-invertible}
 Let $(T,N,S)$ be an $\GRBN$-structure on a $\g$-bimodule $V$. If $T$ is invertible, then, for all $k\in\Nat$,
  $(T,N^k,S^k)$ and $(T_k=N^k\circ T,N^k,S^k)$ are $\GRBN$-structures.
\end{pro}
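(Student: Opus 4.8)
The plan is to derive both assertions simultaneously from Proposition \ref{pro:ComptoNS}, which produces an $\GRBN$-structure out of a compatible pair of $\GRB$-operators one of which is invertible. The only ingredient needed beyond the hypotheses is that the members of the hierarchy $T_k=N^k\circ T=T\circ S^k$ are pairwise compatible $\GRB$-operators on $(V;\huaL,\huaR)$, and this is exactly the content of Theorem \ref{thm:hierarchy}. So rather than verify the defining conditions \eqref{eq:TN1}, \eqref{eq:TN2} together with the Nijenhuis-structure axioms \eqref{eq:coNijpair1}, \eqref{eq:coNijpair2} by hand for each of $(T,N^k,S^k)$ and $(T_k,N^k,S^k)$, I would package all of them into a single application of Proposition \ref{pro:ComptoNS}. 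This also explains the role of the invertibility hypothesis, which is precisely what is required to invoke that proposition.

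First I would record the identity $N^k\circ T=T\circ S^k$ (its common value being $T_k$, with $T_0=T$), obtained by iterating the structural relation \eqref{eq:TN1}, namely $N\circ T=T\circ S$. By Theorem \ref{thm:hierarchy} each $T_k$ is an $\GRB$-operator on $(V;\huaL,\huaR)$ and, taking $l=0$ there, $T_k$ and $T_0=T$ are compatible. Since $T=T_0$ is invertible by hypothesis, I would apply Proposition \ref{pro:ComptoNS} to the compatible pair $T_1:=T_k$ and $T_2:=T$. The key step is then to identify the Nijenhuis structure that this proposition outputs, namely the pair $(N'=T_1\circ T_2^{-1},\,S'=T_2^{-1}\circ T_1)$. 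Using $T_k=N^k\circ T$ one obtains $N'=T_k\circ T^{-1}=N^k\circ T\circ T^{-1}=N^k$, and using the other half of the identity, $T_k=T\circ S^k$, one obtains $S'=T^{-1}\circ T_k=T^{-1}\circ T\circ S^k=S^k$.

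With $N'=N^k$ and $S'=S^k$ in hand, Proposition \ref{pro:ComptoNS} asserts that $(T_i,N^k,S^k)$ is an $\GRBN$-structure for both $i=1$ and $i=2$: the case $i=2$ is $(T,N^k,S^k)$ and the case $i=1$ is $(T_k,N^k,S^k)$, which are exactly the two claims. The argument is essentially bookkeeping; there is no serious obstacle, since being an $\GRBN$-structure is literally the conclusion of Proposition \ref{pro:ComptoNS} once the inputs are matched. The only points to watch are that $T$ must be the \emph{second} operator $T_2$ so that $T_2^{-1}$ exists (this is where invertibility is used), and that the two halves of $N^k\circ T=T\circ S^k$ must be invoked in the correct order so that the abstract $N'$ and $S'$ collapse cleanly to $N^k$ and $S^k$ respectively.
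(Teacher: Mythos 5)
Your proposal is correct and follows exactly the paper's own argument: the paper likewise cites Theorem \ref{thm:hierarchy} for the compatibility of $T$ and $T_k=N^k\circ T$ and then invokes Proposition \ref{pro:ComptoNS} using the invertibility of $T$. The only difference is that you spell out the bookkeeping ($N'=T_k\circ T^{-1}=N^k$, $S'=T^{-1}\circ T_k=S^k$ via $N^k\circ T=T\circ S^k$) that the paper compresses into ``the conclusions follow immediately.''
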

\begin{proof} Since $(T,N,S)$ is an $\GRBN$-structure on a $\g$-bimodule $V$, by Theorem \ref{thm:hierarchy},  $T$ and $T_k=N^k\circ T$ are compatible $\GRB$-operators.  Then by the condition that $T$ is invertible and Proposition \ref{pro:ComptoNS}, the conclusions follow immediately.
\end{proof}

By Proposition \ref{pro:ConNijRB}, we have
\begin{cor}
 Let $(\frkB,N)$ be an $\HN$-structure on a pre-Lie algebra $(\g,\cdot_\g)$. Then for all $k\in\Nat$,
  $(\frkB,N^k)$ are $\HN$-structures and $(r_k,N^k)$ are $\KVN$-structures, where $r_k\in\Sym^2(\g)$ is defined by $r_k(\xi,\eta)=\langle N^k (\frkB^\natural)^{-1}(\xi),\eta\rangle$.
\end{cor}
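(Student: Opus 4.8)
The plan is to lift the statement about the $\HN$-structure $(\frkB,N)$ into the language of $\GRBN$-structures on the dual bimodule $(\g^*;\ad^*,-R^*)$, apply the hierarchy result for invertible $\huaO$-operators, and then descend again. Set $T=(\frkB^\natural)^{-1}:\g^*\longrightarrow\g$, which is invertible since $\frkB$ is nondegenerate. By Proposition \ref{pro:ConNijRB}, the hypothesis that $(\frkB,N)$ is an $\HN$-structure is equivalent to the assertion that $(T,N,S=N^*)$ is an $\GRBN$-structure on $(\g^*;\ad^*,-R^*)$. Applying Proposition \ref{pro:GRBN-invertible} to this invertible $\GRBN$-structure produces, for every $k\in\Nat$, two $\GRBN$-structures on $(\g^*;\ad^*,-R^*)$, namely $(T,N^k,S^k)$ and $(T_k=N^k\circ T,N^k,S^k)$, where $S^k=(N^*)^k=(N^k)^*$ and $N^k$ is a Nijenhuis operator by Lemma \ref{lem:Niejproperty}.

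For the first claim I would read Proposition \ref{pro:ConNijRB} backwards, with $N$ replaced by $N^k$: since $(T=(\frkB^\natural)^{-1},N^k,(N^k)^*)$ is an $\GRBN$-structure on $(\g^*;\ad^*,-R^*)$, that proposition gives at once that $(\frkB,N^k)$ is an $\HN$-structure.

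For the second claim I would first match $T_k$ with a sharp map. Unwinding the definition of $r_k$ gives $\langle r_k^\sharp(\xi),\eta\rangle=r_k(\xi,\eta)=\langle N^k(\frkB^\natural)^{-1}(\xi),\eta\rangle=\langle T_k(\xi),\eta\rangle$, so $r_k^\sharp=T_k$. Reading Proposition \ref{thm:rNijRB} backwards, the fact that $(T_k=r_k^\sharp,N^k,(N^k)^*)$ is an $\GRBN$-structure on $(\g^*;\ad^*,-R^*)$ then yields that $(r_k,N^k)$ is a $\KVN$-structure.

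The only steps that are not purely formal, and hence the main obstacle, are verifying that $r_k$ is a \emph{symmetric} $\frks$-matrix, as the hypotheses of Proposition \ref{thm:rNijRB} require. Symmetry follows from $\frkB\in\Sym^2(\g^*)$, which forces $((\frkB^\natural)^{-1})^*=(\frkB^\natural)^{-1}$, together with \eqref{eq:Hess1}, equivalent (as noted in the proof of Proposition \ref{pro:ConNijRB}) to $N\circ(\frkB^\natural)^{-1}=(\frkB^\natural)^{-1}\circ N^*$: iterating the latter gives $N^k\circ(\frkB^\natural)^{-1}=(\frkB^\natural)^{-1}\circ(N^*)^k$, so that $(r_k^\sharp)^*=(\frkB^\natural)^{-1}(N^*)^k=N^k(\frkB^\natural)^{-1}=r_k^\sharp$, which is exactly the symmetry of $r_k$. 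That $r_k$ is then an $\frks$-matrix follows from Lemma \ref{lem:assrmatrix-GRB}, since $r_k^\sharp=T_k$ is an $\huaO$-operator. In fact, once $(r,N)$ is recognized as a $\KVN$-structure via Corollary \ref{cor:Connes-r-matrix-Nij}, both properties are already recorded in the corollary following Theorem \ref{thm:hierarchy}.
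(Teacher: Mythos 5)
Your argument is correct and follows the paper's own route: the paper derives this corollary by translating the $\HN$-structure into the $\GRBN$-structure $((\frkB^\natural)^{-1},N,N^*)$ via Proposition \ref{pro:ConNijRB}, invoking Proposition \ref{pro:GRBN-invertible}, and translating back through Propositions \ref{pro:ConNijRB} and \ref{thm:rNijRB}. Your additional verification that $r_k$ is symmetric (and hence genuinely an $\frks$-matrix by Lemma \ref{lem:assrmatrix-GRB}) is a detail the paper leaves implicit, but it is a welcome one.
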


\section{Solutions of the strong Maurer-Cartan equation and $\GRBN$-structures}\label{sec:MC-ON}
\subsection{Solutions of the strong Maurer-Cartan equation on $\g\bowtie V_T $ }

\emptycomment{First, we recall the definition of a twilled pre-Lie algebra.
\begin{defi}
  Let $(\huaG,\diamond)$ be a pre-Lie algebra that admits a decomposition into two subspaces, $\huaG=\g_1\oplus \g_2$. The triple $(\huaG,\g_1,\g_2)$ is called a {\bf twilled pre-Lie algebra}, if $\g_1$ and $\g_2$ are subalgebras of $\huaG$. We sometimes denote a twilled pre-Lie algebra by $\g_1\bowtie \g_2$.
\end{defi}

It is not hard to see that if $\g_1\bowtie \g_2$ is a twilled pre-Lie algebra, then there are linear maps $\huaL^1,\huaR^1:\g_1\rightarrow \gl(\g_2)$ and $\huaL^2,\huaR^2:\g_2\rightarrow \gl(\g_1)$ such that $(\g_2;\huaL^1,\huaR^1)$ is a bimodule over $(\g_1,\diamond_1)$  and $(\g_1;\huaL^2,\huaR^2)$ is a bimodule over $(\g_2,\diamond_2)$, where $\diamond_1$ and $\diamond_2$ are the multiplication $\diamond$ of $\huaG$ restricted to $\g_1$ and $\g_2$, respectively. Their bimodule structures are defined by the following decomposition of the pre-Lie multiplication of $\huaG$:
$$
  x\diamond u=\huaL^1_xu+\huaR^2_ux,\quad u\diamond x=\huaL^2_ux+\huaR^1_x u,\quad\forall~x\in \g_1,u\in \g_2.
$$

We recall a basic lemma, which can be used to construct new graded Lie algebra from a differential graded Lie algebra.
\begin{lem}{\rm(\cite{KosmannD})}\label{lem:derived bracket}
Let $(\frkG=\oplus_{n\in \nat}\frkG_n,\{-,-\},\partial)$ be a differential graded Lie algebra, and let $\frkH\subset\frkG$ be an abelian
subalgebra, i.e. $\{\frkH,\frkH\}=0$. We define a new degree by $\Deg_\partial(f)=n+1$ for $f\in \frkG_{n}$. If the derived bracket
$$[f,g]_\partial:=(-1)^{\Deg(f)}\{\partial f, g\}.$$
is closed on $\frkH$, then $(\frkH,[-,-]_{\partial})$ with this new degree $\Deg_\partial$ is a graded Lie algebra.
\end{lem}
See \cite{KosmannD,Voronov1} for more details on the derived bracket.

Let $(\huaG,\g_1,\g_2)$ be a twilled pre-Lie algebra.  Denote by $C^p(\huaG,\huaG)=\Hom(\wedge^{p-1}\huaG\otimes G,\huaG)$ and set $C(\huaG,\huaG)=\oplus_{p\in\Nat}C^p(\huaG,\huaG).$ The graded vector space $C(\huaG,\huaG)$ equipped with the Matsushima-Nijenhuis bracket $[-,-]^{\MN}$ is a graded Lie algebra. Denote by $\pi_2$ the semidirect product pre-Lie algebra structure on $\g_2\ltimes_{\huaL^2,\huaR^2} \g_1$, by Proposition \ref{pro:bimodule and C-bracket}, we have $[\hat{\pi_2},\hat{\pi_2}]^\MN=0$. Set $\partial_2:=[\hat{\pi_2},-]^\MN$ and according to the graded Jacobi identity, we have  $\partial_2\circ\partial_2=0$. Thus $(C(\huaG,\huaG),[-,-]^{\MN},\partial_2)$ is a differential graded Lie algebra. Set $C(\g_1,\g_2)=\oplus_{p\in\Nat}C^p(\g_1,\g_2)$, where $C^p(\g_1,\g_2)=\Hom(\otimes^{p-1}\g_1\otimes \g_1,\g_2)$. It is easy to check that
$(C(\g_1,\g_2),[-,-]^\MN)$ is an abelian subalgebra of $(C(\huaG,\huaG),[-,-]^{\MN},\partial_2)$. The derived bracket $[-,-]_{\pi_2}$ to be defined by
\begin{equation}\label{eq:db}
[P,Q]_{\pi_2}=(-1)^{p-1}[[\hat{\pi_2},P]^{\MN},Q]^{\MN},\quad P\in C^p(\g_1,\g_2),Q\in C^q(\g_1,\g_2)
\end{equation}
is closed on $C(\g_1,\g_2)$. By Lemma \ref{lem:derived bracket}, $(C(\g_1,\g_2),[-,-]_{\pi_2})$ is a graded Lie algebra. Furthermore,

\begin{pro}
  With the above notations, $(C(\g_1,\g_2),[-,-]_{\pi_2},\partial_1)$ is a differential graded Lie algebra, where $\partial_1:=[\hat{\pi_1},-]^\MN$ and $\pi_1$ is the semidirect product pre-Lie algebra structure on $\g_1\ltimes_{\huaL^1,\huaR^1} \g_2$. The degree of the differential graded Lie algebra structure is the same as the usual degree of cochains.
\end{pro}
\begin{proof}
  Since $\pi_1$ is a pre-Lie algebra structure, we have $\partial_1\circ \partial_1=0$ on $C(\g_1,\g_2)$. We only need to show the a derivation property of $\partial_1$. By the fact that $(\huaG,\g_1,\g_2)$ is a twilled pre-Lie algebra, we have $[\hat{\pi_1},\hat{\pi_2}]^\MN=0$. For any $P\in C^p(\g_1,\g_2),Q\in C^q(\g_1,\g_2)$, we have
  \begin{eqnarray*}
    \partial_1[P,Q]_{\pi_2}&=&(-1)^{p-1}[\hat{\pi_1},[[\hat{\pi_2},P]^\MN,Q]^\MN]^\MN\\
    &=&(-1)^{p-1}[[\hat{\pi_1},[\hat{\pi_2},P]^\MN]^\MN,Q]^\MN-[[\hat{\pi_2},P]^\MN,[\hat{\pi_1},Q]^\MN]^\MN\\
    &=&(-1)^{p-1}[[\hat{\pi_2},[\hat{\pi_1},P]^\MN]^\MN,Q]^\MN-[[\hat{\pi_2},P]^\MN,[\hat{\pi_1},Q]^\MN]^\MN\\
    &=&[\partial_1 P,Q]_{\pi_2}+(-1)^{p}[P,\partial_1 Q]_{\pi_2}.
  \end{eqnarray*}
  The conclusion follows immediately.
\end{proof}}

Let $T: V\longrightarrow \g$ be an $\GRB$-operator on a
bimodule $(V;\huaL,\huaR)$ over a pre-Lie algebra  $(\g,\cdot_\g)$. We denote by  $V_T:=(V,\cdot^T)$ the pre-Lie algebra given by \eqref{eq:pre-Lie operation}. Define $\frkL^T,\frkR^T:V\longrightarrow\gl(\g)$ by
\begin{eqnarray}
  \label{eq:HT1}\frkL^T_ux=T(u)\cdot_\g x-T(\huaR_x u),\quad
  \frkR^T_ux=x\cdot_\g T(u)-T(\huaL_x u),\quad \forall x\in \g, u\in V.
\end{eqnarray}

\begin{thm}{\rm(\cite{Liu19})}\label{thm:Uchino}
With the above notations, $(\g; \frkL^T, \frkR^T)$ is a bimodule over the pre-Lie algebra $(V,\cdot^T)$  and $(\g\oplus V,\diamond_T)$ is a pre-Lie algebra, where the multiplication $\diamond_T$ is given by
$$
 (x+u)\diamond_T(y+v)=x\cdot_\g y+\frkL^T_uy+\frkR^T_v x+\huaL_xv+\huaR_yu+u\cdot^Tv,\quad\forall x,y\in A, u,v\in V.
$$

We denote the corresponding twilled pre-Lie algebra by $\g\bowtie V_T$.
 \end{thm}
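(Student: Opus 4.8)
The plan is to reduce the whole theorem to the single assertion that $\diamond_T$ is a pre-Lie multiplication on $\g\oplus V$. Granting this, the restrictions of $\diamond_T$ to $\g$ and to $V$ are $\cdot_\g$ and $\cdot^T$, so $\g$ and $V_T=(V,\cdot^T)$ are subalgebras and $(\g\oplus V,\diamond_T)$ is by definition a twilled pre-Lie algebra. Computing the mixed products gives $x\diamond_T u=\huaL_x u+\frkR^T_u x$ and $u\diamond_T x=\frkL^T_u x+\huaR_x u$; matching these against the canonical decomposition of a twilled algebra recalled before Theorem~\ref{quasi-as-shLie} identifies $\frkL^T,\frkR^T$ as the left and right actions of $V_T$ on $\g$, whence $(\g;\frkL^T,\frkR^T)$ is exactly the bimodule over $(V,\cdot^T)$ claimed in the statement. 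Thus only pre-Lie-ness of $\diamond_T$ remains.

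For that I would avoid verifying the associator identity directly and instead exhibit $(\g\oplus V,\diamond_T)$ as isomorphic to the semidirect product $\g\ltimes_{\huaL,\huaR}V$, which is pre-Lie by Proposition~\ref{pro:bimodule and C-bracket}. Let $\hat{T}\colon\g\oplus V\to\g\oplus V$ extend $T$ by zero on $\g$ and put $\phi=\Id+\hat{T}$, so $\phi(x+u)=(x+T(u))+u$; as $\hat{T}^2=0$, $\phi$ is invertible with inverse $\Id-\hat{T}$. The core step is the intertwining identity $\phi(x+u)\cdot_{\huaL,\huaR}\phi(y+v)=\phi\big((x+u)\diamond_T(y+v)\big)$. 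Expanding the left-hand side produces the term $T(u)\cdot_\g T(v)$, and this is the one and only place where the defining equation \eqref{O-operator} of an $\huaO$-operator enters: rewriting $T(u)\cdot_\g T(v)=T(\huaL_{T(u)}v+\huaR_{T(v)}u)$ makes the spurious terms cancel and produces precisely the $\frkL^T$- and $\frkR^T$-combinations. This cancellation is the main obstacle; the rest is routine expansion. Since $\phi$ thereby becomes an algebra isomorphism onto the pre-Lie algebra $\g\ltimes_{\huaL,\huaR}V$, the multiplication $\diamond_T$ inherits the pre-Lie identity with no further work.

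A more intrinsic alternative, closer to the cochain machinery of the paper, is to note that $\diamond_T$ and the semidirect product structure $\mu$ differ only by the cochain $[\hat{T},\mu]^{\MN}$, exhibiting $\diamond_T$ as a gauge transform of $\mu$ along $\hat{T}$. Using $[\mu,\mu]^{\MN}=0$ and the graded Jacobi identity, $[\diamond_T,\diamond_T]^{\MN}$ collapses to $[[\hat{T},\mu]^{\MN},[\hat{T},\mu]^{\MN}]^{\MN}$, and one more Jacobi step shows this vanishes exactly when $[[\mu,\hat{T}]^{\MN},\hat{T}]^{\MN}=0$, i.e. when $T$ solves the Maurer-Cartan equation characterizing $\huaO$-operators (recalled in the introduction). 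Here the only difficulty is the sign and degree bookkeeping for the Matsushima-Nijenhuis bracket; the conceptual point is that the gauge exponential truncates precisely because $T$ is an $\huaO$-operator. I would present the isomorphism argument as the main proof, record this as a remark, and conclude with the twilled decomposition of the first paragraph.
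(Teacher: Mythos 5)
Your argument is correct, and in fact it supplies a proof where the paper gives none: Theorem~\ref{thm:Uchino} is quoted from \cite{Liu19} without proof, so there is no in-paper argument to compare against. Your main route --- conjugating the semidirect product $\g\ltimes_{\huaL,\huaR}V$ by $\phi=\Id+\hat{T}$, checking $\hat{T}^2=0$, and using \eqref{O-operator} once to convert $T(u)\cdot_\g T(v)$ into $T(u\cdot^T v)$ so that the $-T(\huaL_xv)$ and $-T(\huaR_yu)$ terms inside $\frkL^T,\frkR^T$ cancel --- does check out (I verified both components of the intertwining identity), and it is more elementary than the twisting/derived-bracket formalism that \cite{Liu19}'s framework is built on; your second paragraph is essentially that formalism, consistent with the Maurer--Cartan characterization of $\huaO$-operators recalled in the introduction and with $\diamond_T=\mu+[\mu,\hat{T}]^{\MN}$. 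Two small points: the reduction of the bimodule claim to pre-Lie-ness of $\diamond_T$ leans on the unproved ``it is not hard to see'' decomposition stated before Theorem~\ref{quasi-as-shLie}, which is fine here but worth acknowledging as an ingredient rather than a triviality; and in the remark, ``vanishes exactly when $[[\hat{\mu},\hat{T}]^{\MN},\hat{T}]^{\MN}=0$'' overstates an equivalence --- the Jacobi identity gives $[\diamond_T,\diamond_T]^{\MN}=-[\hat{\mu},[[\hat{\mu},\hat{T}]^{\MN},\hat{T}]^{\MN}]^{\MN}$, so the $\huaO$-operator condition is sufficient (which is all the theorem needs) but the converse does not follow from this computation alone.
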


 By Lemma \ref{lem:SCM-condition}, we have
\begin{pro}\label{pro:SMC-O-operator}
Let $T: V\longrightarrow \g$ be an $\GRB$-operator on a
bimodule $(V;\huaL,\huaR)$ over a pre-Lie algebra  $(\g,\cdot_\g)$. A linear operator $\Omega:\g\rightarrow V$ is a solution of the strong Maurer-Cartan equation on the  twilled pre-Lie algebra $\g\bowtie V_T$ if and only if
\begin{eqnarray}
  \Omega(x\cdot_\g y)&=&\huaL_{x}\Omega(y)+\huaR_{y}\Omega(x),\label{eq:SMC-1}\\
  \Omega(x)\cdot^T\Omega(y)&=&\Omega\big(\frkL^T_{  \Omega(x)}y+\frkR^T_{  \Omega(y)}x\big).\label{eq:SMC-2}
\end{eqnarray}
\end{pro}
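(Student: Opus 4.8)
The plan is to obtain this proposition as a direct specialization of Lemma \ref{lem:SCM-condition} to the twilled pre-Lie algebra $\g\bowtie V_T$ furnished by Theorem \ref{thm:Uchino}. Since that twilled structure is already in hand, the only work is to read off its two bimodule structures from the multiplication $\diamond_T$ and then substitute them into the two equations \eqref{eq:tMC-3} and \eqref{eq:tMC-2} characterizing the strong Maurer-Cartan equation.

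First I would set $\g_1=\g$ and $\g_2=V_T$, so that $\diamond_1=\cdot_\g$ and $\diamond_2=\cdot^T$. Restricting $\diamond_T$ to its mixed terms, taking $x\in\g$ and $v\in V$ gives $x\diamond_T v=\huaL_x v+\frkR^T_v x$, and taking $u\in V$ and $x\in\g$ gives $u\diamond_T x=\frkL^T_u x+\huaR_x u$. Matching these against the canonical decomposition $x\diamond u=\huaL^1_x u+\huaR^2_u x$ and $u\diamond x=\huaL^2_u x+\huaR^1_x u$ of a twilled pre-Lie algebra, I read off $\huaL^1=\huaL$ and $\huaR^1=\huaR$ (the original bimodule of $V$ over $\g$), together with $\huaL^2=\frkL^T$ and $\huaR^2=\frkR^T$ (the bimodule of $\g$ over $V_T$ supplied by Theorem \ref{thm:Uchino}).

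Substituting $\huaL^1=\huaL$ and $\huaR^1=\huaR$ into \eqref{eq:tMC-3} turns it into $\Omega(x\cdot_\g y)=\huaL_x\Omega(y)+\huaR_y\Omega(x)$, which is \eqref{eq:SMC-1}; substituting $\diamond_2=\cdot^T$, $\huaL^2=\frkL^T$ and $\huaR^2=\frkR^T$ into \eqref{eq:tMC-2} turns it into $\Omega(x)\cdot^T\Omega(y)=\Omega(\frkL^T_{\Omega(x)}y+\frkR^T_{\Omega(y)}x)$, which is \eqref{eq:SMC-2}. By Lemma \ref{lem:SCM-condition} these two equations together are exactly equivalent to $\Omega$ being a solution of the strong Maurer-Cartan equation on $\g\bowtie V_T$, which completes the argument.

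The main obstacle, such as it is, is purely bookkeeping: one must keep straight that $\huaL^1,\huaR^1$ act on the second factor $\g_2$ while $\huaL^2,\huaR^2$ act on the first factor $\g_1$, and hence correctly assign $\huaL,\huaR$ versus $\frkL^T,\frkR^T$ to the two sides of the identities. There is no genuine computation beyond reading the coefficients of the already-established formula for $\diamond_T$, so no new identities need to be verified and the proof is short.
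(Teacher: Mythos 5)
Your proposal is correct and is essentially identical to the paper's own argument: the paper simply states the proposition as an immediate consequence of Lemma \ref{lem:SCM-condition} applied to the twilled pre-Lie algebra $\g\bowtie V_T$ from Theorem \ref{thm:Uchino}, and your identification of $\huaL^1=\huaL$, $\huaR^1=\huaR$, $\huaL^2=\frkL^T$, $\huaR^2=\frkR^T$ from the formula for $\diamond_T$ is exactly the required bookkeeping. Nothing is missing.
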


By \eqref{eq:SMC-2}, it is obvious that if $\Omega:\g\longrightarrow V_T$ is a solution of the strong Maurer-Cartan equation on $\g \bowtie V_T$, then $\Omega$ is an $\GRB$-operator on the bimodule
$(\g;\frkL^T,\frkR^T)$ over the pre-Lie algebra $(V,\cdot^T)$. The $\GRB$-operator $\Omega$ induces a pre-Lie algebra $\g_\Omega=(\g,\cdot^\Omega)$, where $\cdot^\Omega$ is given by
$$
x\cdot^\Omega y=\frkL^T_{  \Omega(x)}y+\frkR^T_{  \Omega(y)}x,\quad \forall x,y\in \g.
$$
Furthermore, according to Theorem \ref{thm:Uchino}, $\huaL^\Omega,
\huaR^\Omega:\g\longrightarrow\gl(V)$ defined by $$
\huaL^\Omega_xu=\Omega(x)\cdot^Tu-\Omega(\frkR^T_ux),\;\;
\huaR^\Omega_xu=u\cdot^T\Omega(x)-\Omega(\frkL_u^Tx),\quad\forall
x\in \g, u\in V$$  gives a
bimodule structure of $
\g_\Omega$ on $V$. On $\g\oplus V$, define a
multiplication $\ast_T^\Omega$ by
\begin{eqnarray*}
  (x+u)\ast_T^\Omega(y+v)=x\cdot^\Omega y+\frkL^T_uy+\frkR^T_v x+\huaL^\Omega_xv+\huaR^\Omega_yu +u\cdot^Tv,\quad\forall x,y\in \g, u,v\in V.
\end{eqnarray*}
\begin{pro} \label{pro:SMC-Hirac1}
Let $\Omega:\g\longrightarrow V_T$ be a solution of the strong Maurer-Cartan equation on $\g\bowtie V_T$. Then we have
\begin{itemize}
  \item[\rm(i)] $(\g\oplus V,\ast_T^\Omega)$ is a pre-Lie algebra and we denote the corresponding twilled algebra by $\g_\Omega\bowtie V_T$;
   \item[\rm(ii)] $T$ is a solution of the strong Maurer-Cartan equation on the  twilled algebra $\g_\Omega\bowtie V_T$;
    \item[\rm(iii)] $T$ is an $\GRB$-operator on the bimodule $(V;\huaL^\Omega,\huaR^\Omega)$ over the pre-Lie algebra  $(\g,\cdot^\Omega)$.
\end{itemize}
\end{pro}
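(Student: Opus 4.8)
The plan is to recognize that the whole statement is governed by a single structural fact: by \eqref{eq:SMC-2}, the hypothesis that $\Omega$ solves the strong Maurer-Cartan equation says precisely that $\Omega:\g\lon V_T$ is an $\GRB$-operator on the bimodule $(\g;\frkL^T,\frkR^T)$ over the pre-Lie algebra $(V,\cdot^T)$. Thus Theorem \ref{thm:Uchino} applies verbatim with $T$ replaced by $\Omega$, with $V_T$ playing the role of the ambient pre-Lie algebra and $\g$ the role of the bimodule. First I would apply Theorem \ref{thm:Uchino} to $\Omega$: it produces the induced pre-Lie structure $\cdot^\Omega$ on $\g$ (which is exactly $\g_\Omega$), the derived bimodule $(V;\huaL^\Omega,\huaR^\Omega)$ of $\g_\Omega$, and a twilled pre-Lie algebra structure on $V\oplus\g$. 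A term-by-term comparison then shows that this twilled multiplication coincides with $\ast_T^\Omega$ after the harmless reordering $\g\oplus V\cong V\oplus\g$, with the $V_T$-on-$\g$ action remaining the original $\frkL^T,\frkR^T$ and the $\g_\Omega$-on-$V$ action being the derived $\huaL^\Omega,\huaR^\Omega$. This establishes (i) with essentially no new computation.

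For (ii) and (iii) I would view the same twilled algebra as $V_T\bowtie\g_\Omega$, so that $T:V\lon\g$ is a map from the first factor to the second, and apply Lemma \ref{lem:SCM-condition}: $T$ solves the strong Maurer-Cartan equation on $V_T\bowtie\g_\Omega$ if and only if the two equations $T(u\cdot^Tv)=\frkL^T_uT(v)+\frkR^T_vT(u)$ and $T(u)\cdot^\Omega T(v)=T\big(\huaL^\Omega_{T(u)}v+\huaR^\Omega_{T(v)}u\big)$ hold. The first is automatic: unwinding the definitions \eqref{eq:HT1} of $\frkL^T,\frkR^T$ gives $\frkL^T_uT(v)+\frkR^T_vT(u)=2\,T(u)\cdot_\g T(v)-T(u\cdot^Tv)$, and since $T$ is an $\GRB$-operator we have $T(u)\cdot_\g T(v)=T(u\cdot^Tv)$, so this collapses to $T(u\cdot^Tv)$. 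The second equation is exactly assertion (iii), so (ii) and (iii) will be proved together.

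The heart of the argument, and the step I expect to require genuine care, is verifying this second equation. I would expand the left-hand side $T(u)\cdot^\Omega T(v)=\frkL^T_{\Omega T(u)}T(v)+\frkR^T_{\Omega T(v)}T(u)$ and the right-hand side using the definitions of $\huaL^\Omega,\huaR^\Omega$ and of $\frkL^T,\frkR^T$. Substituting the $\GRB$-operator identity $T(w_1\cdot^Tw_2)=T(w_1)\cdot_\g T(w_2)$ wherever it applies reduces both sides to the common expression $T(\Omega T(u))\cdot_\g T(v)+T(u)\cdot_\g T(\Omega T(v))$ plus a correction term; the two computations will agree provided $T\big(\huaL_{T(u)}\Omega T(v)+\huaR_{T(v)}\Omega T(u)\big)=T\big(\Omega(T(u)\cdot_\g T(v))\big)$. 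This is precisely where the first strong Maurer-Cartan equation \eqref{eq:SMC-1} enters: applied with $x=T(u)$ and $y=T(v)$ it yields $\Omega(T(u)\cdot_\g T(v))=\huaL_{T(u)}\Omega T(v)+\huaR_{T(v)}\Omega T(u)$, closing the gap. The main obstacle is thus organizational rather than conceptual: keeping track of the nested derived actions $\frkL^T,\frkR^T,\huaL^\Omega,\huaR^\Omega$ and isolating the single place where \eqref{eq:SMC-1} (as opposed to \eqref{eq:SMC-2}, which only underlies (i)) is genuinely used.
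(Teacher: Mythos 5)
Your proposal is correct and follows essentially the same route as the paper: part (i) via recognizing (through \eqref{eq:SMC-2}) that $\Omega$ is an $\GRB$-operator on $(\g;\frkL^T,\frkR^T)$ over $(V,\cdot^T)$ and invoking Theorem \ref{thm:Uchino}, and parts (ii)--(iii) by reducing to the two identities of Lemma \ref{lem:SCM-condition}, of which the first collapses by the $\GRB$-property of $T$ and the second is exactly (iii) and is settled by \eqref{eq:SMC-1} applied at $x=T(u)$, $y=T(v)$ — precisely the paper's computation. Your observation that (ii) and (iii) share the same key identity is a minor but accurate streamlining of what the paper states separately.
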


\begin{proof}  (i) By Theorem \ref{thm:Uchino}, it is straightforward to deduce that $(\g\oplus V,\ast_T^\Omega)$ is a pre-Lie algebra.

(ii) To prove that $T$ is a solution of the strong Maurer-Cartan equation on the  twilled pre-Lie algebra $\g_\Omega\bowtie V_T$, we only need to verify the following two equalities:
\begin{eqnarray*}
 T(u\cdot^T v)=\frkL^T_{u}T(v)+\frkR^T_{v}T(u),\quad
  T(u)\cdot^\Omega T(v)=T\big(\huaL^\Omega_{ T(u)}v+\huaR^\Omega_{  T(v)}u\big).
\end{eqnarray*}
The first equality follows from that $T$ is an $\GRB$-operator on a bimodule
$(V;\huaL,\huaR)$ over $\g$. For the second equality, on the one hand, by \eqref{eq:SMC-1}, we have
$$ T(u)\cdot^\Omega T(v)=(T\Omega T(u))\cdot_\g T(v)+ T (u)\cdot_\g( T\Omega T(v))-T\Omega(T(u)\cdot_\g T(v));$$
on the other hand, by the fact that $T$ is an $\GRB$-operator on a bimodule
$(V;\huaL,\huaR)$ over $\g$, we have
\begin{eqnarray*}
T(\huaL^\Omega_{ T(u)}v+\huaR^\Omega_{  T(v)}u)&=&T\big((\Omega T(u))\cdot^T v+u\cdot^T (\Omega T(v))-\Omega(T(u)\cdot_\g T(v))\big)\\
&=&(T\Omega T(u))\cdot_\g T(v)+ T (u)\cdot_\g( T\Omega T(v))-T\Omega(T(u)\cdot_\g T(v)),
\end{eqnarray*}
which implies that the second equality also holds.

(iii) can be obtained by a straightforward calculation. We omit the details.
\end{proof}

\begin{pro}\label{pro:RB-SMC}
  Let $R$ be a Rota-Baxter operator on the pre-Lie algebra $(\g,\cdot_\g)$, that is,
  $$R(x)\cdot_\g R(y)= R(R(x)\cdot_\g y+x\cdot_\g R(y)),\quad\forall~x,y\in\g.$$
  Then $\Omega:\g\longrightarrow \g$ is a solution of the strong Maurer-Cartan equation on $\g_R\bowtie\g $ if and only if
  \begin{eqnarray}
  \label{eq:RB-MC1}  \Omega(x\cdot_\g y)&=&\Omega(x)\cdot_\g y+x\cdot_\g \Omega(y),\\
   \label{eq:RB-MC2}  \Omega R \Omega(x\cdot_\g y)&=&\Omega R \Omega(x)\cdot_\g y+x\cdot_\g \Omega R \Omega(y),
  \end{eqnarray}
  that is, $\Omega$ and $\Omega R \Omega$ are derivations of the pre-Lie algebra $\g$.
\end{pro}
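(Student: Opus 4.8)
The plan is to recognize the Rota-Baxter operator $R$ as an $\GRB$-operator on the regular bimodule $(\g;\huaL,\huaR)$, where $\huaL_xy=x\cdot_\g y$ and $\huaR_xy=y\cdot_\g x$, so that the strong Maurer-Cartan apparatus of Proposition \ref{pro:SMC-O-operator} becomes available with $T=R$ and $V=\g$. In this case the induced pre-Lie algebra $V_T$ is exactly $\g_R=(\g,\cdot^R)$, and Proposition \ref{pro:SMC-O-operator} tells us that $\Omega$ is a solution of the strong Maurer-Cartan equation on $\g\bowtie\g_R$ if and only if the two conditions \eqref{eq:SMC-1} and \eqref{eq:SMC-2} hold. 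Thus the whole task reduces to specializing those two equations to the regular bimodule and showing that, together, they are equivalent to \eqref{eq:RB-MC1} and \eqref{eq:RB-MC2}.

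The first condition \eqref{eq:SMC-1} is immediate: substituting $\huaL_x\Omega(y)=x\cdot_\g\Omega(y)$ and $\huaR_y\Omega(x)=\Omega(x)\cdot_\g y$ turns it verbatim into \eqref{eq:RB-MC1}, i.e.\ the statement that $\Omega$ is a derivation of $(\g,\cdot_\g)$. The substance is therefore to analyze \eqref{eq:SMC-2} under the standing assumption that \eqref{eq:RB-MC1} already holds. First I would unfold every operation in \eqref{eq:SMC-2} using the explicit formulas \eqref{eq:pre-Lie operation} and \eqref{eq:HT1} specialized to $T=R$: the left-hand side becomes $R\Omega(x)\cdot_\g\Omega(y)+\Omega(x)\cdot_\g R\Omega(y)$, while the argument of $\Omega$ on the right-hand side expands to $R\Omega(x)\cdot_\g y+x\cdot_\g R\Omega(y)-R(\Omega(x)\cdot_\g y)-R(x\cdot_\g\Omega(y))$.

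Next I would apply the derivation property \eqref{eq:RB-MC1} of $\Omega$ to the two terms $\Omega(R\Omega(x)\cdot_\g y)$ and $\Omega(x\cdot_\g R\Omega(y))$ appearing on the right-hand side. This splits each into a Leibniz pair, and precisely the summands $R\Omega(x)\cdot_\g\Omega(y)$ and $\Omega(x)\cdot_\g R\Omega(y)$ reappear and cancel against the left-hand side, leaving
$$0=\Omega R\Omega(x)\cdot_\g y+x\cdot_\g\Omega R\Omega(y)-\Omega R(\Omega(x)\cdot_\g y)-\Omega R(x\cdot_\g\Omega(y)).$$
Using \eqref{eq:RB-MC1} once more, now in the form $\Omega(x)\cdot_\g y+x\cdot_\g\Omega(y)=\Omega(x\cdot_\g y)$ together with linearity of $R$ and $\Omega$, the last two terms collapse to $\Omega R\Omega(x\cdot_\g y)$, so the displayed identity becomes exactly \eqref{eq:RB-MC2}. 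Reading each manipulation as an equivalence then yields the biconditional and the reformulation that $\Omega$ and $\Omega R\Omega$ are both derivations.

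The argument uses no deep input beyond Proposition \ref{pro:SMC-O-operator} itself, where the Rota-Baxter identity is what guarantees that the twilled structure $\g\bowtie\g_R$ exists in the first place. The main obstacle is purely organizational: keeping the half-dozen terms of the expanded \eqref{eq:SMC-2} straight and invoking the derivation property \eqref{eq:RB-MC1} at exactly the two places that make the cross terms cancel cleanly, so that what survives is precisely the derivation condition on $\Omega R\Omega$.
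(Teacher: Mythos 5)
Your proposal is correct and follows essentially the same route as the paper: specialize Proposition \ref{pro:SMC-O-operator} to the regular bimodule with $T=R$, identify \eqref{eq:SMC-1} with \eqref{eq:RB-MC1}, expand \eqref{eq:SMC-2} via \eqref{eq:HT1}, and apply the derivation property of $\Omega$ twice so that the cross terms cancel and what remains is exactly \eqref{eq:RB-MC2}. The cancellation you describe is precisely the computation in the paper's proof.
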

\begin{proof}
  Since the Rota-Baxter operator $R: \g\longrightarrow \g$ is an $\huaO$-operator on the bimodule $(\g;L,R)$ over $\g$, it is obvious that \eqref{eq:RB-MC1} and \eqref{eq:SMC-1} are equivalent. By a direct calculation, the right hand side of \eqref{eq:SMC-2} is equal to
  $$\Omega\Big(R\Omega(x)\cdot_\g y-R(\Omega(x)\cdot_\g y)+x\cdot_\g R\Omega(y)-R(x\cdot_\g \Omega(y))\Big).$$
  By \eqref{eq:RB-MC1},
  \begin{eqnarray*}
    &&\Omega\Big(R\Omega(x)\cdot_\g y-R(\Omega(x)\cdot_\g y)+x\cdot_\g R\Omega(y)-R(x\cdot_\g \Omega(y))\Big)\\
    &=&\Omega R \Omega(x)\cdot_\g y+x\cdot_\g \Omega R \Omega(y)-\Omega R \Omega(x\cdot_\g y)+R\Omega(x)\cdot_\g\Omega(y)+\Omega(x)\cdot_\g R\Omega(y).
  \end{eqnarray*}
  On the other hand, the left hand side of \eqref{eq:SMC-2} is equal to
  $$R\Omega(x)\cdot_\g\Omega(y)+\Omega(x)\cdot_\g R\Omega(y).$$
  Comparing the both sides of \eqref{eq:SMC-2}, \eqref{eq:RB-MC2} follows immediately. The converse can be proved similarly.
\end{proof}

\emptycomment{\begin{ex}\label{ex:RB-MC1}
  We put $\g=C^1([0,1])$. Then $\g$ is a communicative associative algebra with the usual multiplication and thus $\g$ is a pre-Lie algebra. The integral operator is a Rota-Baxter operator:
\begin{equation*}
   R:\g\rightarrow \g,\quad R(f)(x):=\int^x_0f(t)dt.
\end{equation*}
 Define $\Omega:\g\rightarrow \g$ by
  \begin{equation*}
  \Omega(f)(x):=\kappa(x)\frac{df}{dx}(x)=\kappa(x)f'(x),\quad \kappa(x)\in \g.
  \end{equation*}
By direct calculation, we have
$$\Omega R \Omega(f)(x)=\kappa^2(x)f'(x).$$
It is obvious that $\Omega$ and $\Omega R \Omega$ are derivations of the pre-Lie algebra $\g$. Thus $\Omega$ is a solution of the strong Maurer-Cartan equation on $\g_R\bowtie\g $.
\end{ex}}

\begin{ex}\label{ex:RB-MC11}
  We put $\g=C^{\infty}([0,1])$. Define the multiplication
  $$f\cdot_\g g:=f(x)\frac{dg}{dx}(x)=f(x)g'(x),\quad\forall~f,g\in\g.$$
  Then $(\g,\cdot_\g)$ is a pre-Lie algebra. The integral operator is a Rota-Baxter operator:
\begin{equation*}
   R:\g\rightarrow \g,\quad R(f)(x):=\int^x_0f(t)dt.
\end{equation*}
 Define $\Omega:\g\rightarrow \g$ by
  \begin{equation*}
  \Omega(f)(x):=\lambda\frac{df}{dx}(x)=\lambda f'(x),\quad \lambda\in \K.
  \end{equation*}
By direct calculation, we have
$$\Omega R \Omega(f)(x)=\lambda^2f'(x).$$
It is obvious that $\Omega$ and $\Omega R \Omega$ are derivations of the pre-Lie algebra $\g$. Thus $\Omega$ is a solution of the strong Maurer-Cartan equation on $\g_R\bowtie\g $.
\end{ex}

\emptycomment{\begin{ex}
  Let $(A,\cdot)$ be an associative algebra and $A[[\nu]]$ {the} algebra of formal series with coefficients in $A$. The pre-Lie operation on $A[[v]]$ is defined by
  $$a_i\nu^i\ast b_j\nu^j:=j(a_i\cdot b_j)\nu^{i+j-1},\quad a_i,b_j\in A,$$
  where $\Sigma$ is omitted. We define a formal integral operator by
  $$\int a_i\nu^i d\nu:=\frac{1}{i+1}a_i\nu^{i+1},\quad a_i\in A.$$
  The integral operator is an $\GRB$-operator. Then $\Omega:A[[\nu]]\rightarrow A[[\nu]]$ defined by
  $$\Omega(a_i\nu^i):=z_k\nu^k\frac{d}{d\nu}(a_i\nu^i):=i(z_k\cdot a_i)\nu^{i+k-1},\quad z_k\in Z(A)$$
  is a solution of the strong Maurer-Cartan equation . Here $Z(A)$ is the space of central elements. Thus, by Theorem \ref{thm:MC-GRBN}, $(T,N,S)$ is an $\GRBN$-structure, where $N$ and $S$ are, respectively, given by
  \begin{eqnarray*}
    N(a_i\nu^i)=\frac{i}{i+k}(z_k\cdot a_i)\nu^{i+k},\quad
    S(a_i\nu^i)=(z_k\cdot a_i)\nu^{i+k}.
  \end{eqnarray*}
Furthermore, by Proposition \ref{pro:S-Nijenhuis operator}, $S$ is a Nijenhuis operator on the associative algebra $(A[[\nu]],\star)$, where the multiplication $\star:A[[\nu]]\otimes A[[\nu]]\longrightarrow A[[\nu]]$ is given by
\begin{equation*}
 a_i\nu^i\star b_j\nu^j:=\frac{(i+j+2)(a_i\cdot b_j)}{(i+1)(j+1)}\nu^{i+j+1},\quad a_i,b_j\in A.
\end{equation*}
By Corollary \ref{pro:MC-GRBN-cor3}, for each $m\in\Nat$, the operator
 $$(N^m\circ\int d\nu) a_i\nu^i=\frac{1}{i+mk+1}(z_k\cdot a_i)\nu^{i+mk+1}$$
is an $\GRB$-operator   and each pair \liu{of} these $\GRB$-operators \liu{is} compatible, i.e. for all $s_1,s_2\in\Real,m,n\in\Nat$,
$$\big(s_1(N^m\circ\int d\nu)+s_2(N^n\circ\int d\nu)\big)a_i\nu^i=s_1\frac{1}{i+mk+1}(z_k\cdot a_i)\nu^{i+mk+1}+s_2\frac{1}{i+nk+1}(z_k\cdot a_i)\nu^{i+nk+1}$$
are still $\GRB$-operators.
\end{ex}}

\begin{ex}\label{ex:RB-MC2}
Let $(\g,\cdot_\g)$ be a 3-dimensional pre-Lie algebra
with a basis $\{e_1,e_2,e_3\}$ given
as follows:
$$e_1\cdot_\g e_1=e_1,\quad e_1\cdot_\g e_2=e_3,$$
whose sub-adjacent Lie algebra is the $3$-dimensional Heisenberg Lie algebra. Let $R(e_j)=\sum_{i=1}^3 r_{ij}e_i$ and $\Omega(e_j)=\sum_{i=1}^3 \omega_{ij}e_i$.  Then $\Omega$ is a solution of the strong Maurer-Cartan equation on $\g\bowtie\g_R $, in which the Rota-Baxter operator $R$ and $\Omega$ are given as follows:
\begin{itemize}
  \item[\rm(1)] $R =\begin{bmatrix}0&
0&0\\ r_{21}&r_{22}&0\\ r_{31}& r_{32}& 0\end{bmatrix},\quad \Omega=\begin{bmatrix}0&
0&0\\ 0&0&0\\ 0& \omega_{32}& 0\end{bmatrix};$
   \item[\rm(2)] $R =\begin{bmatrix}0&
0&0\\ 0&0&0\\ r_{31}& r_{32}& r_{33}\end{bmatrix}(r_{33}\neq 0),\quad \Omega=\begin{bmatrix}0&
0&0\\ 0&0&0\\ 0& \omega_{32}& 0\end{bmatrix};$
    \item[\rm(3)] $R =\begin{bmatrix}0&
0&0\\ r_{21}&0&0\\ r_{31}& r_{32}& 0\end{bmatrix},\quad \Omega=\begin{bmatrix}0&
0&0\\ 0&\omega_{22}&0\\0& \omega_{32}& \omega_{22}\end{bmatrix}$.
\end{itemize}

\end{ex}
\subsection{Solutions of the strong Maurer-Cartan equation and $\GRBN$-structures}
In the following, we study the relationships between solutions of the strong Maurer-Cartan equations and $\GRBN$-structures. First, we have
\begin{pro}\label{pro:MC-Nijenhuis}
  Let $\Omega:\g\longrightarrow V_T$ be a solution of the strong Maurer-Cartan equation on $\g\bowtie V_T$. Then $N=T\circ\Omega$ is a Nijenhuis operator on the pre-Lie algebra $(\g,\cdot_\g)$.
\end{pro}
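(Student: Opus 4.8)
The plan is to verify the Nijenhuis identity
$$N(x)\cdot_\g N(y)=N\big(N(x)\cdot_\g y+x\cdot_\g N(y)-N(x\cdot_\g y)\big),\quad\forall~x,y\in\g,$$
directly, by rewriting the left-hand side through the $\GRB$-operator property of $T$ and then invoking the two equations \eqref{eq:SMC-1} and \eqref{eq:SMC-2} that characterize solutions of the strong Maurer-Cartan equation in Proposition \ref{pro:SMC-O-operator}.

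First I would use that $T$ is an $\GRB$-operator on $(V;\huaL,\huaR)$ together with the definition of the multiplication $\cdot^T$ on $V_T$ to get
$$N(x)\cdot_\g N(y)=T\Omega(x)\cdot_\g T\Omega(y)=T\big(\huaL_{T\Omega(x)}\Omega(y)+\huaR_{T\Omega(y)}\Omega(x)\big)=T\big(\Omega(x)\cdot^T\Omega(y)\big).$$
Applying \eqref{eq:SMC-2} converts the inner term, so that $N(x)\cdot_\g N(y)=N\big(\frkL^T_{\Omega(x)}y+\frkR^T_{\Omega(y)}x\big)$. Next I would expand the bimodule actions by their definitions \eqref{eq:HT1}, namely $\frkL^T_{\Omega(x)}y=N(x)\cdot_\g y-T(\huaR_y\Omega(x))$ and $\frkR^T_{\Omega(y)}x=x\cdot_\g N(y)-T(\huaL_x\Omega(y))$, which yields
$$\frkL^T_{\Omega(x)}y+\frkR^T_{\Omega(y)}x=N(x)\cdot_\g y+x\cdot_\g N(y)-T\big(\huaL_x\Omega(y)+\huaR_y\Omega(x)\big).$$
Finally, \eqref{eq:SMC-1} identifies $\huaL_x\Omega(y)+\huaR_y\Omega(x)$ with $\Omega(x\cdot_\g y)$, so the subtracted term becomes $T\Omega(x\cdot_\g y)=N(x\cdot_\g y)$ and the whole argument collapses precisely to the right-hand side of the Nijenhuis identity.

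There is no genuine obstacle here; the computation is purely formal, chaining together the $\GRB$-operator relation, the explicit form of $\frkL^T$ and $\frkR^T$, and the two strong Maurer-Cartan equations. The only points requiring care are tracking which argument of $T$ is being simplified at each stage and confirming that the $\GRB$-operator identity $T\big(\huaL_{T(u)}v+\huaR_{T(v)}u\big)=T(u)\cdot_\g T(v)$ is applied in the correct direction.
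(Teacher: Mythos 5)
Your proposal is correct and follows essentially the same route as the paper's proof: both rewrite $T\Omega(x)\cdot_\g T\Omega(y)$ via the $\GRB$-operator identity and \eqref{eq:SMC-2}, expand $\frkL^T_{\Omega(x)}y+\frkR^T_{\Omega(y)}x$ using \eqref{eq:HT1}, and then invoke \eqref{eq:SMC-1} to recover $N(x\cdot_\g y)$. The only difference is cosmetic — you start from the left-hand side of the Nijenhuis identity while the paper starts by applying $T$ to \eqref{eq:SMC-2} — and every step is valid.
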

\begin{proof}
Applying $T$ to \eqref{eq:SMC-2}, we have
  $$T\Omega(x)\cdot_{\g}T\Omega(y)=T\Omega\big(\frkL^T_{  \Omega(x)}y+\frkR^T_{  \Omega(y)}x\big).$$
  On the right-hand side,
  $$\frkL^T_{  \Omega(x)}y+\frkR^T_{  \Omega(y)}x=T\Omega(x)\cdot_{\g}y-T(\huaR_y \Omega(x))+x\cdot_{\g}T\Omega(y)-T(\huaL_x \Omega(y)).$$
 By \eqref{eq:SMC-1}, we have
 $$\frkL^T_{  \Omega(x)}y+\frkR^T_{  \Omega(y)}x=T\Omega(x)\cdot_{\g}y+x\cdot_{\g}T\Omega(y)-T\Omega(x\cdot_\g y).$$
 Thus we have
 $$T\Omega(x)\cdot_{\g}T\Omega(y)=T\Omega\big(T\Omega(x)\cdot_{\g}y+x\cdot_{\g}T\Omega(y)-T\Omega(x\cdot_\g y)\big).$$
  \end{proof}

Now we give the main result in this section, which says that a solution of the strong Maurer-Cartan equation on the twilled pre-Lie algebra associated to an $\GRB$-operator gives rise to an $\GRBN$-structure.

\begin{thm}\label{thm:MC-GRBN}
 Let $T: V\longrightarrow \g$ be an $\GRB$-operator on a
bimodule $(V;\huaL,\huaR)$ over a pre-Lie algebra  $(\g,\cdot_\g)$ and $\Omega:\g\longrightarrow V_T$ a solution of the strong Maurer-Cartan equation on $\g\bowtie V_T$. Then
\begin{itemize}
\item[$\rm(i)$]$(T,N,S)$ is an $\GRBN$-structure on the bimodule $(V;\huaL,\huaR)$ over the pre-Lie algebra $(\g,\cdot_\g)$, where $N=T\circ\Omega$ and $S=\Omega\circ T$;
    \item[$\rm(ii)$]$(\Omega,S,N)$ is an $\GRBN$-structure on the
bimodule $(\g;\frkL^T,\frkR^T)$ over the pre-Lie algebra  $(V_T,\cdot^T)$, where $N=T\circ\Omega$ and
$S=\Omega\circ T$.
\end{itemize}
\end{thm}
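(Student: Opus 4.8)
The plan is to verify, for part (i), each clause in the definition of an $\GRBN$-structure in turn, and then to obtain part (ii) from part (i) by exploiting the symmetry between $T$ and $\Omega$. Throughout I abbreviate $[\huaL_x,S]:=\huaL_x S-S\huaL_x$ and $[\huaR_x,S]:=\huaR_x S-S\huaR_x$, regarded as operators on $V$, so that the Nijenhuis-structure conditions \eqref{eq:coNijpair1} and \eqref{eq:coNijpair2} read $[\huaL_{N(x)},S]v=[\huaL_x,S](S(v))$ and $[\huaR_{N(x)},S]v=[\huaR_x,S](S(v))$.

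For part (i), $T$ is an $\GRB$-operator by hypothesis and $N=T\circ\Omega$ is a Nijenhuis operator on $(\g,\cdot_\g)$ by Proposition~\ref{pro:MC-Nijenhuis}. Condition \eqref{eq:TN1} is immediate, since $N\circ T=T\circ\Omega\circ T=T\circ S$. For \eqref{eq:TN2} I would expand $u\cdot^{N\circ T}v$ and $u\cdot^{T}_{S}v$ using $TS=T\Omega T=NT$; their difference collapses to $[\huaR_{T(v)},S]u+[\huaL_{T(u)},S]v$, which vanishes because $T(u\cdot^T v)=T(u)\cdot_\g T(v)$ by \eqref{O-operator}, combined with \eqref{eq:SMC-1}. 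The substantive step, and the main obstacle, is to show that $(N,S)=(T\Omega,\Omega T)$ is a Nijenhuis structure, i.e. \eqref{eq:coNijpair1} and \eqref{eq:coNijpair2}.

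My strategy there is to produce two auxiliary identities and combine them. First, expanding \eqref{eq:SMC-2} and simplifying its right-hand side with \eqref{eq:SMC-1} yields, after cancellation, the single bilinear relation $[\huaR_y,S]\Omega(x)+[\huaL_x,S]\Omega(y)=0$ for all $x,y\in\g$; call it $(\ast)$. Second, using that $T$ is an $\GRB$-operator I can rewrite $N(x)\cdot_\g T(v)=T(\huaL_{N(x)}v+\huaR_{T(v)}\Omega(x))$, then compute $\Omega\big(N(x)\cdot_\g T(v)\big)$ in two ways, once via \eqref{eq:SMC-1} and once via $\Omega T=S$, and equate to obtain $[\huaL_{N(x)},S]v=-[\huaR_{T(v)},S]\Omega(x)$; the parallel computation starting from $T(v)\cdot_\g N(x)$ gives $[\huaR_{N(x)},S]v=-[\huaL_{T(v)},S]\Omega(x)$. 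The crucial observation is that $S(v)=\Omega(T(v))$ lies in the image of $\Omega$, so $(\ast)$ may be applied to the pairs $(x,T(v))$ and $(T(v),x)$; these turn $-[\huaR_{T(v)},S]\Omega(x)$ into $[\huaL_x,S](S(v))$ and $-[\huaL_{T(v)},S]\Omega(x)$ into $[\huaR_x,S](S(v))$, which are exactly the right-hand sides of \eqref{eq:coNijpair1} and \eqref{eq:coNijpair2}. The difficulty I anticipate is precisely that $(\ast)$ is too entangled to be split directly into separate $\huaL$- and $\huaR$-identities: the decoupling occurs only after feeding the image element $S(v)$ back into $(\ast)$.

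For part (ii) I would avoid repeating the computation and instead invoke the duality already present in Proposition~\ref{pro:SMC-Hirac1}. By the discussion following Proposition~\ref{pro:SMC-O-operator}, $\Omega$ is itself an $\GRB$-operator on the bimodule $(\g;\frkL^T,\frkR^T)$ over $V_T$, and by Proposition~\ref{pro:SMC-Hirac1}(ii) the map $T$ is a solution of the strong Maurer-Cartan equation on the associated twilled pre-Lie algebra $\g_\Omega\bowtie V_T$. Applying part (i) to the $\GRB$-operator $\Omega$ with this solution $T$ produces the $\GRBN$-structure $(\Omega,\ \Omega\circ T,\ T\circ\Omega)=(\Omega,S,N)$, which is the assertion of (ii); as a consistency check, $S$ is a Nijenhuis operator on $V_T$ by Proposition~\ref{pro:S-Nijenhuis operator} applied to the structure from part (i), and the analogue of \eqref{eq:TN1} here reads $S\circ\Omega=\Omega\circ N$, again immediate. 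Thus the only genuine work is part (i), and within it the decoupling argument for \eqref{eq:coNijpair1} and \eqref{eq:coNijpair2}.
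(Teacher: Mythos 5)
Your proof of part (i) is correct and is, in substance, the paper's own argument reorganized rather than a new one: your identity $(\ast)$, namely $[\huaR_y,S]\Omega(x)+[\huaL_x,S]\Omega(y)=0$, specialized at $y=T(v)$ (resp.\ at $x=T(u)$) is exactly the paper's \eqref{eq:MC-4} (resp.\ its $\huaR$-analogue); your two-way computation of $\Omega\big(N(x)\cdot_\g T(v)\big)$ reproduces the paper's \eqref{eq:MC-3} with $u=\Omega(x)$; and your verification of \eqref{eq:TN2} via \eqref{O-operator} and \eqref{eq:SMC-1} is the same cancellation the paper performs with \eqref{eq:MC-3}. The one place you genuinely depart from the paper is part (ii): the paper disposes of it with ``can be proved similarly,'' i.e.\ a second symmetric computation that it does not write out, whereas you deduce it formally from part (i) by exchanging the roles of $T$ and $\Omega$, using that $\Omega$ is an $\GRB$-operator on the bimodule $(\g;\frkL^T,\frkR^T)$ over $(V_T,\cdot^T)$ and that, by Proposition \ref{pro:SMC-Hirac1}(ii), $T$ solves the strong Maurer--Cartan equation on the twilled pre-Lie algebra that Theorem \ref{thm:Uchino} associates to $\Omega$. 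This reduction is valid: the algebra $\g_\Omega\bowtie V_T$ of Proposition \ref{pro:SMC-Hirac1} coincides with the one produced by Theorem \ref{thm:Uchino} from $\Omega$, and the swapped instance of (i) outputs precisely $(\Omega,\Omega\circ T,T\circ\Omega)=(\Omega,S,N)$ on the correct bimodule. What your route buys is that part (ii) comes for free once Proposition \ref{pro:SMC-Hirac1}(ii) is in hand, at the price of checking carefully that that proposition supplies exactly the hypothesis of (i) in the swapped roles --- it does, so the argument is complete.
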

\begin{proof}  (i) By Proposition \ref{pro:MC-Nijenhuis}, $N=T\circ\Omega$ is a Nijenhuis operator on the pre-Lie algebra $(\g,\cdot_\g)$. Now we prove that $(N,S)$ is a Nijenhuis structure on the bimodule $(V;\huaL,\huaR)$ over the pre-Lie algebra $(\g,\cdot_\g)$. Since $T$ is an $\GRB$-operator and $\Omega$ satisfies \eqref{eq:SMC-1}, we have
\begin{eqnarray}
\label{eq:MC-3}\Omega T(\huaL_{T(u)}v+\huaR_{T(v)}u)=\Omega(T(u)\cdot_\g T(v))
=\huaL_{T(u)}\Omega T(v)+\huaR_{T(v)}\Omega T(u),\quad\forall~u,v\in V.
\end{eqnarray}
Replacing $y$ by $T(v)$ in \eqref{eq:SMC-2}, by \eqref{eq:SMC-1}, we have
\begin{eqnarray*}
  0&=&\Omega(x)\cdot^T\Omega T(v)-\Omega\big(\frkL^T_{  \Omega(x)}T(v)+\frkR^T_{  \Omega T(v)}x\big)\\
  &=&\huaL_{T\Omega(x) }\Omega T(v)+\huaR_{T\Omega T(v)}\Omega(x)-\Omega\big(T\Omega(x)\cdot_\g T(v)-T(\huaR_{T(v)}\Omega(x))\big)\\
  &&-\Omega\big(x\cdot_\g T\Omega T(v)-T(\huaL_{x}\Omega T(v))\big)\\
  &=&\huaL_{T\Omega(x) }\Omega T(v)+\huaR_{T\Omega T(v)}\Omega(x)-\huaL_{T\circ \Omega(x)}\Omega T(v)-\huaR_{T(v)}\Omega T\Omega(x)\\
  &&+\Omega T(\huaR_{T(v)}\Omega(x))-\huaL_x\Omega T\Omega T(v)-\huaR_{T\circ \Omega T(v)}\Omega(x)+\Omega T(\huaL_{x}\Omega T(v))\\
  &=&-\huaR_{T(v)}\Omega T\Omega(x)+\Omega T(\huaR_{T(v)}\Omega(x))-\huaL_x\Omega T\Omega T(v)+\Omega T(\huaL_{x}\Omega T(v)),
\end{eqnarray*}
which implies that
\begin{equation}\label{eq:MC-4}
  \Omega T(\huaR_{T(v)}\Omega(x))-\huaR_{T(v)}\Omega T\Omega(x)=\huaL_x\Omega T\Omega T(v)-\Omega T(\huaL_{x}\Omega T(v)).
\end{equation}
By \eqref{eq:MC-3} and \eqref{eq:MC-4}, we obtain
\begin{equation*}
  \huaL_{T\Omega(x)}\Omega T(v)- \Omega T(\huaL_{T\Omega(x)}v)=\huaL_x\Omega T\Omega T(v)-\Omega T(\huaL_{x}\Omega T(v)).
\end{equation*}
This is just the condition \eqref{eq:coNijpair1} for $N=T\circ\Omega$ and $S=\Omega\circ T$.

Similarly, replacing $x$ by $T(u)$ in \eqref{eq:SMC-2}, by \eqref{eq:SMC-1}, we have
\begin{eqnarray*}
  \huaR_y\Omega T\Omega T(u)-\Omega T(\huaR_y\Omega T(u))=\Omega T(\huaL_{T(u)}\Omega(y))-\huaL_{T(u)}\Omega T\Omega(y).
\end{eqnarray*}
By \eqref{eq:MC-3}, we have
\begin{equation*}
  \huaR_{T\Omega(y)}\Omega T(u)- \Omega T(\huaR_{T\Omega(u)}y)=\huaR_y\Omega T\Omega T(u)-\Omega T(\huaR_{y}\Omega T(u)).
\end{equation*}
This is just the condition \eqref{eq:coNijpair2} for $N=T\circ\Omega$ and $S=\Omega\circ T$. Thus $(N=T\circ\Omega,S=\Omega\circ T)$ is a Nijenhuis structure on the bimodule $(V;\huaL,\huaR)$ over the pre-Lie algebra  $(\g,\cdot_\g)$.

It is obvious that $T\circ S=N\circ T=T\circ\Omega\circ T$.
By direct calculation, we have
\begin{eqnarray*}
 {u\cdot^{T}_{S} v}-{u\cdot^{T\circ S}v}&=&\huaL_{T(u)}S(v)+\huaR_{T(v)}S(u)-S\Big(\huaL_{T(u)}v+\huaR_{T(v)}u\Big)\\
 &=&\huaL_{T(u)}\Omega T(v)+\huaR_{T(v)}\Omega T(u)-\Omega T\Big(\huaL_{T(u)}v+\huaR_{T(v)}u\Big).
\end{eqnarray*}
\eqref{eq:MC-3} implies that
 $${u\cdot^{T}_{S} v}={u\cdot^{T\circ S}v}.$$
 Thus $(T,N,S)$ is an $\GRBN$-structure on the bimodule $(V;\huaL,\huaR)$ over the pre-Lie algebra $(\g,\cdot_\g)$.

  (ii) can be proved similarly. We omit the details.
  \end{proof}

Next we consider the converse of Theorem~\ref{thm:MC-GRBN}.
\begin{thm}\label{thm:GRBN-MC}
 Let $(T,N,S)$ be an $\GRBN$-structure on the bimodule $(V;\huaL,\huaR)$ over the pre-Lie algebra  $(\g,\cdot_\g)$. If  $T$ is invertible, then $\Omega:=T^{-1}\circ N=S\circ T^{-1}:\g\rightarrow V$  is a solution of the strong Maurer-Cartan equation on $\g\bowtie V_T$.
\end{thm}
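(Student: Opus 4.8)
The plan is to invoke Proposition~\ref{pro:SMC-O-operator} and verify directly that $\Omega=T^{-1}\circ N$ satisfies its two defining conditions \eqref{eq:SMC-1} and \eqref{eq:SMC-2}. First observe that $\Omega$ is well defined and has the two forms claimed: from \eqref{eq:TN1} we have $N\circ T=T\circ S$, and composing with $T^{-1}$ (here invertibility of $T$ enters) gives $T^{-1}\circ N=S\circ T^{-1}$; in particular $\Omega\circ T=S$ and $T\circ\Omega=N$. Throughout I will substitute $x=T(u),\ y=T(v)$ with $u,v\in V$, which is harmless since $T$ is bijective. The key identity I extract from the $\GRBN$-structure is
\begin{equation*}
S(\huaL_{T(u)}v+\huaR_{T(v)}u)=\huaL_{T(u)}S(v)+\huaR_{T(v)}S(u),\qquad\forall~u,v\in V. \tag{$\star$}
\end{equation*}
It follows by expanding condition \eqref{eq:TN2}: the left side $u\cdot^{N\circ T}v$ equals $\huaL_{TS(u)}v+\huaR_{TS(v)}u$ by \eqref{eq:pre-Lie operation} together with \eqref{eq:TN1}, while the right side $u\cdot^T_S v$ expands via \eqref{eq:defieq21} and \eqref{eq:pre-Lie operation} into $\huaL_{TS(u)}v+\huaR_{T(v)}S(u)+\huaL_{T(u)}S(v)+\huaR_{TS(v)}u-S(u\cdot^T v)$; cancelling the common terms $\huaL_{TS(u)}v$ and $\huaR_{TS(v)}u$ leaves exactly $(\star)$.

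To check \eqref{eq:SMC-1}, since $T$ is an $\GRB$-operator we have $x\cdot_\g y=T(u)\cdot_\g T(v)=T(u\cdot^T v)$, so $\Omega(x\cdot_\g y)=T^{-1}NT(u\cdot^T v)=S(u\cdot^T v)$ using \eqref{eq:TN1}. On the other hand $\huaL_x\Omega(y)+\huaR_y\Omega(x)=\huaL_{T(u)}S(v)+\huaR_{T(v)}S(u)$ because $\Omega\circ T=S$, and the two sides coincide by $(\star)$.

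The main computation is \eqref{eq:SMC-2}. Expanding $\frkL^T_{\Omega(x)}y+\frkR^T_{\Omega(y)}x$ with the formulas \eqref{eq:HT1}, and then rewriting $T(S(u))\cdot_\g T(v)$ and $T(u)\cdot_\g T(S(v))$ via the $\GRB$-operator identity for $T$, all cross terms cancel and one obtains $\frkL^T_{\Omega(x)}y+\frkR^T_{\Omega(y)}x=T(\huaL_{TS(u)}v+\huaR_{TS(v)}u)=T(u\cdot^{N\circ T}v)$; applying $\Omega=T^{-1}N$ and \eqref{eq:TN1} gives the right side of \eqref{eq:SMC-2} as $S(u\cdot^{N\circ T}v)$. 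For the left side, $\Omega(x)\cdot^T\Omega(y)=S(u)\cdot^T S(v)$, and since $S$ is a Nijenhuis operator on $(V,\cdot^T)$ by Proposition~\ref{pro:S-Nijenhuis operator}, this equals $S(u\cdot^T_S v)=S(u\cdot^{N\circ T}v)$ by \eqref{eq:TN2}. Hence both sides agree and \eqref{eq:SMC-2} holds.

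The step demanding the most care is \eqref{eq:SMC-2}: one must correctly unwind the bimodule operations $\frkL^T,\frkR^T$ of the twilled structure $\g\bowtie V_T$ and recognize that it is the Nijenhuis property of $S$ on $(V,\cdot^T)$, rather than $(\star)$ alone, that collapses the left side down to $S(u\cdot^{N\circ T}v)$. Once $(\star)$ and Proposition~\ref{pro:S-Nijenhuis operator} are in hand the remaining manipulations are routine substitutions, and Proposition~\ref{pro:SMC-O-operator} then yields that $\Omega$ is a solution of the strong Maurer-Cartan equation, completing the proof.
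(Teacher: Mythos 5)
Your proof is correct, and its overall skeleton---reduce to the two conditions of Proposition~\ref{pro:SMC-O-operator} and verify them after substituting $x=T(u)$, $y=T(v)$---is the same as the paper's; in particular your identity $(\star)$ and the verification of \eqref{eq:SMC-1} coincide with the paper's argument. Where you genuinely diverge is in \eqref{eq:SMC-2}. The paper writes $T\bigl(\Omega(x)\cdot^T\Omega(y)\bigr)=T\Omega(x)\cdot_\g T\Omega(y)$ and invokes the Nijenhuis property of $N=T\circ\Omega$ on $(\g,\cdot_\g)$, matching the two sides of \eqref{eq:SMC-2} only after applying the injective map $T$ and then cancelling it; moreover its intermediate identity $\frkL^T_{\Omega(x)}y+\frkR^T_{\Omega(y)}x=T\Omega(x)\cdot_\g y+x\cdot_\g T\Omega(y)-T\Omega(x\cdot_\g y)$ quietly uses \eqref{eq:SMC-1}, which it only establishes afterwards. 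You instead expand $\frkL^T,\frkR^T$ directly through the $\GRB$-identity for $T$ to get $T(u\cdot^{N\circ T}v)$ with no appeal to \eqref{eq:SMC-1}, and you collapse the left-hand side via the Nijenhuis property of $S$ on $(V,\cdot^T)$ supplied by Proposition~\ref{pro:S-Nijenhuis operator}. Your route leans on that already-proved proposition but is cleaner about logical ordering and dispenses with the final cancellation of $T$; the paper's route is more self-contained, needing only that $N$ is a Nijenhuis operator on $\g$. Both arguments are valid.
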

\begin{proof} Since $N=T\circ \Omega$ is a Nijenhuis operator on the pre-Lie algebra  $(\g,\cdot_\g)$, we have
\begin{eqnarray}\label{eq:MC5}
  T \Omega(x)\cdot_\g  T \Omega(y)=T\Omega\big(T \Omega(x)\cdot_\g y+x\cdot_\g  T \Omega(y)-T\Omega(x\cdot_\g y)\big).
\end{eqnarray}
By direct calculations, we have
\begin{equation}\label{eq:MC6}
  \frkL^T_{ \Omega(x)}y+\frkR^T_{  \Omega(y)}x=T \Omega(x)\cdot_\g y+x\cdot_\g  T \Omega(y)-T \Omega(x\cdot_\g y).
\end{equation}
Since $T$ is an $\GRB$-operator, \eqref{eq:MC5} and \eqref{eq:MC6} imply that
\begin{eqnarray*}
  T(\Omega(x)\cdot^T\Omega(y))= T \Omega(x)\cdot_\g  T \Omega(y)=T \Omega\big( \frkL^T_{ \Omega(x)}y+\frkR^T_{  \Omega(y)}x\big).
\end{eqnarray*}
Thus \eqref{eq:SMC-2} holds because $T$ is invertible.

Since ${u\cdot^{T}_{S} v}={u\cdot^{T\circ S}v}$ with $S=\Omega \circ T$, we have
\begin{eqnarray*}
\Omega T\Big(\huaL_{T(u)}v+\huaR_{T(v)}u\Big)=\huaL_{T(u)}\Omega T(v)+\huaR_{T(v)}\Omega T(u).
\end{eqnarray*}
Thus we have
\begin{eqnarray*}
  \Omega(T(u)\cdot_\g T( v))=\Omega T\Big(\huaL_{T(u)}v+\huaR_{T(v)}u\Big)=\huaL_{T(u)}\Omega T(v)+\huaR_{T(v)}\Omega T(u).
\end{eqnarray*}
Let $x=T(u)$ and $y=T(v)$, we have
$\Omega(x\cdot_\g y)=\huaL_{x}\Omega(y)+\huaR_{y}\Omega(x).$
Thus \eqref{eq:SMC-1} follows from that $T$ is invertible. We finish the proof.\end{proof}

By Theorem \ref{pro:TS1} and Proposition \ref{pro:TS2}, we have
\begin{cor}
  Let $T: V\longrightarrow \g$ be an $\GRB$-operator  on a
bimodule $(V;\huaL,\huaR)$ over a pre-Lie algebra  $(\g,\cdot_\g)$ and $\Omega:\g\rightarrow V$ a solution of the strong Maurer-Cartan equation on $\g \bowtie V_T$.
Then $N \circ  T$ is also an $\GRB$-operator, where $N=T\circ\Omega$. Moreover, $T$ and $N\circ T$ are compatible.
\end{cor}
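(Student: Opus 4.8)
The plan is to observe that the corollary is an immediate consequence of the three results cited just before it, so no new computation is needed. First I would note that the hypotheses---$T$ an $\GRB$-operator on $(V;\huaL,\huaR)$ over $(\g,\cdot_\g)$ and $\Omega:\g\rightarrow V_T$ a solution of the strong Maurer-Cartan equation on $\g\bowtie V_T$---are precisely those of Theorem \ref{thm:MC-GRBN}. Invoking part (i) of that theorem, I obtain that $(T,N,S)$ with $N=T\circ\Omega$ and $S=\Omega\circ T$ is an $\GRBN$-structure on the bimodule $(V;\huaL,\huaR)$ over $(\g,\cdot_\g)$. This single step does all the real work, since it converts the Maurer-Cartan hypothesis into the algebraic structure that the subsequent general propositions take as input.

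Once the $\GRBN$-structure is in hand, both assertions follow directly. For the first, I would apply Theorem \ref{pro:TS1}(ii) to the $\GRBN$-structure $(T,N,S)$, which yields at once that $N\circ T$ is an $\GRB$-operator on $(V;\huaL,\huaR)$ over $(\g,\cdot_\g)$. For the second, I would apply Proposition \ref{pro:TS2}, which asserts that $T$ and $T\circ S$ are compatible $\GRB$-operators. Here the defining relation \eqref{eq:TN1} of an $\GRBN$-structure gives the identity $N\circ T=T\circ S$, so that the compatibility of $T$ and $N\circ T$ claimed in the corollary is exactly the content of Proposition \ref{pro:TS2}.

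There is essentially no obstacle in this argument, as every step is a direct citation of an earlier result whose hypotheses are met verbatim. The only point requiring a word of care is the bookkeeping identification $N\circ T=T\circ S$ furnished by \eqref{eq:TN1}; this is what reconciles the notation of Proposition \ref{pro:TS2} (phrased in terms of $T\circ S$) with the statement to be proved (phrased in terms of $N\circ T$). I would therefore present the proof simply as the chain ``Theorem \ref{thm:MC-GRBN}(i) $\Rightarrow$ $(T,N,S)$ is an $\GRBN$-structure $\Rightarrow$ Theorem \ref{pro:TS1}(ii) and Proposition \ref{pro:TS2},'' noting \eqref{eq:TN1} in passing, and omit any further computation.
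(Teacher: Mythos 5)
Your argument is correct and matches the paper's own proof, which derives the corollary by combining Theorem \ref{thm:MC-GRBN} (to obtain the $\GRBN$-structure $(T,N,S)$) with Theorem \ref{pro:TS1} and Proposition \ref{pro:TS2}. Your explicit note that \eqref{eq:TN1} reconciles $N\circ T$ with $T\circ S$ is a helpful touch the paper leaves implicit.
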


By Theorem \ref{thm:hierarchy}, we have
\begin{cor}\label{pro:MC-GRBN-cor3}
  Let $T: \g\longrightarrow A$ be an $\GRB$-operator  on a
bimodule $(V;\huaL,\huaR)$ over a pre-Lie algebra  $(\g,\cdot_\g)$. Let $\Omega:\g\rightarrow V$  be a Maurer-Cartan element on the  twilled pre-Lie algebra $ \g\bowtie V_T$. Then
\begin{itemize}
\item[$\rm(i)$] for all $k\in\Nat$, $T_k=(T\circ \Omega)^k\circ T$ are $\GRB$-operators on the bimodule $(V;\huaL,\huaR)$ over the pre-Lie algebra $(\g,\cdot_\g)$. Furthermore, for all $k,l\in\Nat$, $T_k$ and $T_l$ are compatible.
    \item[$\rm(ii)$]for all $k\in\Nat$, $\Omega_k=( \Omega\circ T)^k\circ \Omega$ are $\GRB$-operators on the bimodule $(\g;\frkL^T,\frkR^T)$ over the pre-Lie algebra  $(V_T,\cdot^T)$. Furthermore, for all $k,l\in\Nat$, $\Omega_k$ and $\Omega_l$ are compatible.
\end{itemize}
\end{cor}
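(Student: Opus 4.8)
The plan is to deduce the corollary directly from the combination of Theorem~\ref{thm:MC-GRBN} and Theorem~\ref{thm:hierarchy}, so that no fresh computation is required. First I would invoke Theorem~\ref{thm:MC-GRBN}(i): since $\Omega:\g\longrightarrow V_T$ is a solution of the strong Maurer-Cartan equation on $\g\bowtie V_T$, the triple $(T,N,S)$ with $N=T\circ\Omega$ and $S=\Omega\circ T$ is an $\GRBN$-structure on the bimodule $(V;\huaL,\huaR)$ over $(\g,\cdot_\g)$. Feeding this $\GRBN$-structure into Theorem~\ref{thm:hierarchy} yields immediately that the maps $N^k\circ T$ are $\GRB$-operators on $(V;\huaL,\huaR)$ and that $N^k\circ T$ and $N^l\circ T$ are compatible for all $k,l\in\Nat$. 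It then only remains to identify these with the operators named in the statement: since $N=T\circ\Omega$ we have $N^k=(T\circ\Omega)^k$, hence $N^k\circ T=(T\circ\Omega)^k\circ T=T_k$. This establishes part (i).

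For part (ii) I would run the identical argument on the \emph{dual} $\GRBN$-structure supplied by Theorem~\ref{thm:MC-GRBN}(ii), namely $(\Omega,S,N)$ on the bimodule $(\g;\frkL^T,\frkR^T)$ over the pre-Lie algebra $(V_T,\cdot^T)$, again with $N=T\circ\Omega$ and $S=\Omega\circ T$. The key point is that here the roles of the data are permuted: in this triple $\Omega$ is the $\GRB$-operator and $S=\Omega\circ T$ is the Nijenhuis operator on $V_T$. Applying Theorem~\ref{thm:hierarchy} to $(\Omega,S,N)$ (that is, with $T$, $N$, $S$ of that theorem replaced by $\Omega$, $S$, $N$ respectively) gives that the maps $S^k\circ\Omega$ are pairwise compatible $\GRB$-operators on $(\g;\frkL^T,\frkR^T)$ over $(V_T,\cdot^T)$; and since $S=\Omega\circ T$ we get $S^k\circ\Omega=(\Omega\circ T)^k\circ\Omega=\Omega_k$, which is exactly the claim of (ii).

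The only genuine care needed---and the closest thing to an obstacle---is the bookkeeping of which operator plays which role when Theorem~\ref{thm:hierarchy} is applied in part (ii): the hierarchy in that theorem is generated by iterating the \emph{middle} (Nijenhuis) entry of the $\GRBN$-triple, which for the dual structure $(\Omega,S,N)$ is $S=\Omega\circ T$ and not $N$. Once this matching is made explicit, both statements follow formally from the two cited results, and no direct verification of either the $\GRB$-operator identity or the compatibility condition is needed beyond what Theorem~\ref{thm:hierarchy} already provides.
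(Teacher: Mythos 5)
Your proposal is correct and follows exactly the route the paper intends: the corollary is stated with the preamble ``By Theorem \ref{thm:hierarchy}, we have,'' i.e.\ one first obtains the two $\GRBN$-structures $(T,N,S)$ and $(\Omega,S,N)$ from Theorem \ref{thm:MC-GRBN} and then feeds each into Theorem \ref{thm:hierarchy}. Your explicit bookkeeping for part (ii) --- that the hierarchy for the dual structure is generated by $S=\Omega\circ T$, giving $S^k\circ\Omega=(\Omega\circ T)^k\circ\Omega=\Omega_k$ --- is exactly the identification the paper leaves implicit.
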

\emptycomment{\begin{ex}
Consider the solution of the strong Maurer-Cartan equation on the  twilled algebra $\g_R\bowtie \g$ given by Example \ref{ex:RB-MC1}. Thus, by Theorem \ref{thm:MC-GRBN}, $(R,N,S)$ is an $\GRBN$-structure, where $N$ and $S$ are, respectively, given by
  \begin{eqnarray*}
    N(f)(x)=\int^x_0\kappa(t)f'(t)dt,\quad S(h)(x)=\kappa(x)h(x),\quad\forall~f\in \g,h\in \g.
  \end{eqnarray*}
Furthermore, by Proposition \ref{pro:S-Nijenhuis operator}, $S$ is a Nijenhuis operator on the pre-Lie algebra $(\g,\ast)$, where the multiplication $\ast:\g\otimes \g\longrightarrow \g$ is given by
\begin{equation*}
 ( f\ast g)(x)=g(x)\int^x_0f(t)dt+f(x)\int^x_0g(t)dt, \quad\forall~f,g\in \g.
\end{equation*}
By Corollary \ref{pro:MC-GRBN-cor3}, for each $i\in\Nat$, the operator $N^i\circ T$ given by
 $$(N^i\circ T)(f)(x)= \int^x_0\kappa^i(t)f(t)dt$$
is an $\GRB$-operator  and each pair \liu{of} these $\GRB$-operators \liu{is} compatible, i.e. for all $s_1,s_2\in\Real,i,j\in\Nat$,
$$(s_1N^i\circ T+s_2N^j\circ T)(f)(x)=s_1\int^x_0\kappa^i(t)f(t)dt+s_2\int^x_0\kappa^j(t)f(t)dt$$
are still $\GRB$-operators.
\end{ex}}

\begin{ex}
Consider the solution of the strong Maurer-Cartan equation on the  twilled pre-Lie algebra $\g\bowtie \g_R$ given by Example \ref{ex:RB-MC11}. Thus, by Theorem \ref{thm:MC-GRBN}, $(R,N,S)$ is an $\GRBN$-structure, where $N$ and $S$ are, respectively, given by
  \begin{eqnarray*}
    N(f)(x)=\lambda f(x),\quad S(g)(x)=\lambda g(x),\quad\forall~f,g\in \g.
  \end{eqnarray*}
\end{ex}

\begin{ex}
  Consider the solution of the strong Maurer-Cartan equation on the twilled pre-Lie algebra $\g\bowtie \g_R$ given by Example \ref{ex:RB-MC2}. Then $(R,N,S)$ are $\GRBN$-structures, where $R$, $N$ and $S$ are given as follows:
  \begin{itemize}
  \item[\rm(1)] $R =\begin{bmatrix}0&
0&0\\ r_{21}&r_{22}&0\\ r_{31}& r_{32}& 0\end{bmatrix},\quad N=0,\quad S=\begin{bmatrix}0&
0&0\\ 0&0&0\\ 0& r_{21}\omega_{32}& r_{22}\omega_{32}\end{bmatrix};$
   \item[\rm(2)] $R =\begin{bmatrix}0&
0&0\\ 0&0&0\\ r_{31}& r_{32}& r_{33}\end{bmatrix}(r_{33}\neq 0),\quad N=\begin{bmatrix}0&
0&0\\ 0&0&0\\ 0& r_{32}\omega_{32}& 0\end{bmatrix},\quad S=0;$
    \item[\rm(3)] $R =\begin{bmatrix}0&
0&0\\ r_{21}&0&0\\ r_{31}& r_{32}& 0\end{bmatrix},\quad N=\begin{bmatrix}0&
0&0\\ 0&0&0\\0& r_{32}\omega_{22}& 0\end{bmatrix},\quad S=\begin{bmatrix}0&
0&0\\ r_{21}\omega_{22}&0&0\\r_{31}\omega_{22}& r_{32}\omega_{22}& 0\end{bmatrix}$.
\end{itemize}
\end{ex}
\subsection{$\KVB$-structures and  solutions of the strong Maurer-Cartan equation}\label{sec:Application}

In this subsection, we introduce the notion of a $\KVB$-structure, which has a close relationship with solutions of the strong Maurer-Cartan equation, $\KVN$-structures and $\HN$-structures.
\begin{defi}
  Let $r\in\Sym^2(\g)$ be an $\frks$-matrix and $\frkB\in\Sym^2(\g^*)$ be $\dt$-closed on a pre-Lie algebra $(\g,\cdot_\g)$. Then $(r,B)$ is called a {\bf $\KVB$-structure} if $\frkB_N$ is also $\dt$-closed, where $N=r^\sharp\circ \frkB^\natural$ and $\frkB_N\in\Sym^2(\g^*)$ is defined by $\frkB_N(x,y)=\frkB(N(x),y)$.
\end{defi}

\begin{ex}\label{ex:KVB1}
Let $\g$ be the $3$-dimensional pre-Lie algebra given in Example \ref{ex:HN2}.
Then $(r,\frkB)$ given by
 \begin{eqnarray*}
r&=&r_{11} e_1\otimes e_1+r_{22} e_2\otimes e_2+r_{23} e_2\otimes e_3+r_{23} e_3\otimes e_2,\\
\frkB&=&a e_1^*\otimes e_1^*+ b e_2^*\otimes e_3^*+b e_3^*\otimes e_2^*+ce_3^*\otimes e_3^*
 \end{eqnarray*}
 is a $\KVB$-structure on the pre-Lie algebra $\g$, where $a,b,c,r_{11},r_{22}$ and $r_{23}$ are constants.
\end{ex}
\begin{ex}\label{ex:KVB2}
    Let $(\g,\cdot_\g)$ be a 3-dimensional pre-Lie algebra
with a basis $\{e_1,e_2,e_3\}$ whose non-zero products are given
as follows:
$$e_2\cdot_\g e_3=e_1,\quad e_3\cdot_\g e_2=-e_1.$$
  Let $\{e_1^*,e_2^*,e_3^*\}$ be the dual basis. Then $(r,\frkB)$ given by
\begin{eqnarray*}
r&=&r_{11} e_1\otimes e_1+r_{12}e_1\otimes e_2+r_{12}e_2\otimes e_1+r_{13} e_1\otimes e_3+r_{13} e_3\otimes e_1,\\
\frkB&=&a e_2^*\otimes e_2^*+ b e_2^*\otimes e_3^*+b e_3^*\otimes e_2^*+ce_3^*\otimes e_3^*
   \end{eqnarray*}
 is a $\KVN$-structure on the pre-Lie algebra $(\g,\cdot_\g)$, where $a,b,c,r_{11},r_{12}$ and $r_{13}$ are constants.
\end{ex}

\begin{thm}\label{thm:comptoMC}
 Let $r\in\Sym^2(\g)$ be an $\frks$-matrix on a pre-Lie algebra $(\g,\cdot_\g)$. Then  $(r,\frkB)$ is a $\KVB$-structure if and only if $\frkB^\natural:\g\rightarrow \g^*$ is a solution of the strong Maurer-Cartan equation on the twilled pre-Lie algebra $\g\bowtie\g^*_{r^\sharp}$.
\end{thm}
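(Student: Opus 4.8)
The plan is to apply Proposition~\ref{pro:SMC-O-operator} and then recognize each of the two resulting identities as a $\dt$-closedness condition. Since $r$ is an $\frks$-matrix, Lemma~\ref{lem:assrmatrix-GRB} gives that $T:=r^\sharp$ is an $\GRB$-operator on the bimodule $(\g^*;\ad^*,-R^*)$, so by Theorem~\ref{thm:Uchino} the twilled pre-Lie algebra $\g\bowtie\g^*_{r^\sharp}$ is defined; by Proposition~\ref{pro:SMC-O-operator}, $\Omega:=\frkB^\natural:\g\rightarrow\g^*$ is a solution of the strong Maurer-Cartan equation if and only if it satisfies both \eqref{eq:SMC-1} and \eqref{eq:SMC-2}. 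Writing $N:=T\circ\Omega=r^\sharp\circ\frkB^\natural$, I would prove that \eqref{eq:SMC-1} is equivalent to $\dt\frkB=0$ and that, granting this, \eqref{eq:SMC-2} is equivalent to $\dt\frkB_N=0$; these two conditions together are exactly the defining conditions of a $\KVB$-structure.

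First I would pair \eqref{eq:SMC-1} with an arbitrary $z\in\g$. Using $\langle\frkB^\natural(a),b\rangle=\frkB(a,b)$, the identity $\ad^*=L^*-R^*$, and $\langle L^*_x\xi,z\rangle=-\langle\xi,x\cdot_\g z\rangle$, $\langle R^*_x\xi,z\rangle=-\langle\xi,z\cdot_\g x\rangle$, the equation \eqref{eq:SMC-1} turns into $\frkB(x\cdot_\g y,z)=-\frkB(y,x\cdot_\g z)+\frkB(y,z\cdot_\g x)+\frkB(x,z\cdot_\g y)$. Because $\frkB$ is symmetric, this says precisely that $D(x,y,z):=\frkB(x\cdot_\g y,z)+\frkB(x\cdot_\g z,y)$ is invariant under a cyclic permutation of its arguments; since $D$ is already symmetric in its last two arguments, this upgrades to total symmetry of $D$, which is exactly $\dt\frkB=0$. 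Hence \eqref{eq:SMC-1} holds if and only if $\frkB$ is $\dt$-closed.

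Granting $\dt\frkB=0$, I would next simplify \eqref{eq:SMC-2}. Substituting \eqref{eq:SMC-1} into the definition \eqref{eq:HT1} of $\frkL^T,\frkR^T$, the argument of $\Omega$ on the right-hand side of \eqref{eq:SMC-2} collapses to $\frkL^T_{\Omega(x)}y+\frkR^T_{\Omega(y)}x=N(x)\cdot_\g y+x\cdot_\g N(y)-N(x\cdot_\g y)=x\cdot_N y$ (cf. \eqref{eq:deformbracket}), so \eqref{eq:SMC-2} becomes $\ad^*_{N(x)}\Omega(y)-R^*_{N(y)}\Omega(x)=\Omega(x\cdot_N y)$. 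A crucial preliminary observation is that, since $r\in\Sym^2(\g)$ and $\frkB\in\Sym^2(\g^*)$, both $r^\sharp$ and $\frkB^\natural$ are self-adjoint, so $N=r^\sharp\circ\frkB^\natural$ automatically satisfies $\frkB(N(x),y)=\frkB(x,N(y))$ and $N\circ r^\sharp=r^\sharp\circ N^*$; in particular $\frkB_N\in\Sym^2(\g^*)$. Pairing the reduced \eqref{eq:SMC-2} with $z$ and using the symmetry of $\frkB$ and of $\frkB_N$ together with $\dt\frkB=0$, I would identify the outcome with total symmetry of $D_N(x,y,z):=\frkB_N(x\cdot_\g y,z)+\frkB_N(x\cdot_\g z,y)$, i.e. with $\dt\frkB_N=0$. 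Combining this with the first equivalence proves the theorem, each implication being reversible.

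The hard part will be this last equivalence. In contrast to \eqref{eq:SMC-1}, the equation \eqref{eq:SMC-2} paired with $z$ is not visibly symmetric under $x\leftrightarrow y$, whereas the target condition $\dt\frkB_N=0$ is; so I must check that the part of \eqref{eq:SMC-2} antisymmetric in $(x,y)$ holds automatically. This is precisely the place where the full $\frks$-matrix hypothesis on $r$ — equivalently the $\GRB$-operator property of $r^\sharp$, and not merely the symmetry of $r$ — must enter, and the term-chasing that rewrites the raw pairing of \eqref{eq:SMC-2} as the cyclic symmetry of $D_N$ is the crux of the argument. The converse direction requires no new ideas, since reading the chain of equivalences backwards reconstructs \eqref{eq:SMC-1} and \eqref{eq:SMC-2} from $\dt\frkB=0$ and $\dt\frkB_N=0$.
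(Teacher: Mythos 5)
Your proposal follows essentially the same route as the paper: reduce via Proposition \ref{pro:SMC-O-operator}, identify \eqref{eq:SMC-1} with $\dt\frkB=0$, and then---using \eqref{eq:SMC-1} to collapse the right-hand side of \eqref{eq:SMC-2} to $\frkB^\natural(x\cdot_N y)$---identify the reduced \eqref{eq:SMC-2} with $\dt\frkB_N=0$; the term-chasing you defer is done in the paper by summing three instances of $\dt\frkB=0$ (with arguments $(N(x),y,z)$, $(x,N(y),z)$, $(x,y,N(z))$) and subtracting the $\dt\frkB_N$-identity, which is reversible and so gives both implications. One small correction: that final equivalence uses only the symmetry of $r$ and $\frkB$ together with $\dt\frkB=0$, not the equation $\llbracket r,r\rrbracket=0$ itself, which enters only to guarantee (via Lemma \ref{lem:assrmatrix-GRB} and Theorem \ref{thm:Uchino}) that the twilled pre-Lie algebra $\g\bowtie\g^*_{r^\sharp}$ exists so that the statement makes sense.
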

\begin{proof}
Assume that $(r,\frkB)$ is a $\KVB$-structure. Note that $\frkB$ is $\dt$-closed if and only if it satisfies
$$\frkB^\natural(x\cdot_\g y)=\ad^*_x \frkB^\natural(y)-R^*_y\frkB^\natural(x),$$
which is just \eqref{eq:SMC-1} with $\Omega=\frkB^\natural$.

On one hand, for \eqref{eq:SMC-2} with $\Omega=\frkB^\natural$ and $T=r^\sharp$, we have
\begin{eqnarray}
  \nonumber&&\langle\frkB^\natural(x)\cdot_{r^\sharp}  \frkB^\natural(y)-\frkB^\natural({\ad^{*{r^\sharp}}_{\frkB^\natural(x)}}y-R^{*r^\sharp}_{\frkB^\natural(y)}x),z\rangle\\
 \nonumber &=&\langle \ad^*_{r^\sharp\frkB^\natural(x)}\frkB^\natural(y)-R^*_{r^\sharp\frkB^\natural(y)}\frkB^\natural(x),z\rangle-\langle\frkB^\natural\big(r^\sharp(\xi)\cdot_\g x+r^\sharp(R^*_x \xi)+x\cdot_\g r^\sharp(\xi)-r^\sharp(\ad^*_x \xi)\big),z\rangle\\
 \label{eq:SMC-s-matrix} &=&\frkB(r^\sharp \frkB^\natural(y)\cdot_\g z,x)+\frkB(y\cdot_\g r^\sharp\frkB^\natural(z),x)-\frkB(r^\sharp \frkB^\natural(x)\cdot_\g z,y)-\frkB(x\cdot_\g r^\sharp \frkB^\natural(z),y)\\
 \nonumber &&-\frkB([r^\sharp \frkB^\natural(x),y]_\g,z)+\frkB([r^\sharp \frkB^\natural(y),x]_\g,z).
\end{eqnarray}
  On the other hand, letting $N=r^\sharp\circ \frkB^\natural$ and by the fact that $\frkB$ is $\dt$-closed, we have
\begin{eqnarray*}
  \frkB(y,N(x)\cdot_\g z)-\frkB(N(x),y\cdot_\g z)+\frkB([N(x),y]_\g,z)&=&0;\\
  \frkB(N(y),x\cdot_\g z)-\frkB(x,N(y)\cdot_\g z)+\frkB([x,N(y)]_\g,z)&=&0;\\
  \frkB(y,x\cdot_\g N(z))-\frkB(x,y\cdot_\g N(z))+\frkB([x,y]_\g,N(z))&=&0,
\end{eqnarray*}
which implies that
\begin{eqnarray*}
&&\frkB(y,N(x)\cdot_\g z)+\frkB(N(y),x\cdot_\g z)+\frkB(y,x\cdot_\g N(z))-\frkB(N(x),y\cdot_\g z)-\frkB(x,N(y)\cdot_\g z)\\
 &&-\frkB(x,y\cdot_\g N(z))+\frkB([N(x),y]_\g,z)+\frkB([x,N(y)]_\g,z)+\frkB([x,y]_\g,N(z))=0.
\end{eqnarray*}
Since $\frkB_N$ is also $\dt$-closed, we have
$$\frkB(N(y),x\cdot_\g z)-\frkB(N(x),y\cdot_\g z)+\frkB([x,y]_\g,N(z))=0.$$
Thus
\begin{eqnarray*}
 &&\frkB(y, x\cdot_\g N(z))-\frkB(x ,y\cdot_\g N(z))+\frkB(y ,N(x)\cdot_\g z)\\
 &&-\frkB(x, N(y)\cdot_\g z)+\frkB(z,[N(x),y]_\g)+\frkB(z,[x ,N(y)]_\g)=0,
\end{eqnarray*}
which is just \eqref{eq:SMC-s-matrix} with $N=r^\sharp\circ \frkB^\natural$. By Proposition \ref{pro:SMC-O-operator}, $\frkB^\natural:\g\rightarrow \g^*$ is a solution of the strong Maurer-Cartan equation on the twilled pre-Lie algebra $\g\bowtie\g^*_{r^\sharp}$.

The converse can be proved similarly.
\end{proof}

The following proposition demonstrates the relation between $\KVB$-structures and $\HN$-structures.
\begin{pro}\label{pro:ConNij-maxCon}
If $(r,\frkB)$ is a $\KVB$-structure on a pre-Lie algebra $(\g,\cdot_\g)$ and $\frkB$ is invertible, then $(\frkB,N=r^\sharp\circ \frkB^\natural)$ is an $\HN$-structure.

Conversely, if $(\frkB,N)$ is an $\HN$-structure, then $(r,\frkB)$ is a $\KVB$-structure, where $r\in\Sym^2(\g)$ is determined by $r^\sharp=N\circ (\frkB^\natural)^{-1}$.
\end{pro}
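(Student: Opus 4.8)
The plan is to route both implications through the equivalences already in hand: Theorem~\ref{thm:comptoMC} identifies a $\KVB$-structure with a solution of the strong Maurer--Cartan equation (with $T=r^\sharp$ and $\Omega=\frkB^\natural$), while Proposition~\ref{pro:ConNijRB} identifies an $\HN$-structure with an $\GRBN$-structure whose $\GRB$-operator is $(\frkB^\natural)^{-1}$. The operator $N=r^\sharp\circ\frkB^\natural$ is the common thread linking the two descriptions. Before either direction I would record the symmetry identity
$$\frkB(N(x),y)=\frkB(x,N(y)),$$
which is \eqref{eq:Hess1}: writing $\xi=\frkB^\natural(x)$ and $\eta=\frkB^\natural(y)$ and using the symmetry of both $\frkB$ and $r$, each side collapses to $r(\xi,\eta)$. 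In particular $\frkB_N(x,y)=\frkB(N(x),y)$ is symmetric, and, granted \eqref{eq:Hess1}, its $\dt$-closedness coincides term by term with condition \eqref{eq:ConNijcon}.

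For the forward direction, assume $(r,\frkB)$ is a $\KVB$-structure with $\frkB$ invertible, so that $\frkB$ is a nondegenerate $\dt$-closed symmetric form, i.e. a pseudo-Hessian structure. By Theorem~\ref{thm:comptoMC}, $\frkB^\natural$ solves the strong Maurer--Cartan equation on $\g\bowtie\g^*_{r^\sharp}$; hence Proposition~\ref{pro:MC-Nijenhuis}, applied with $T=r^\sharp$ and $\Omega=\frkB^\natural$, shows at once that $N=r^\sharp\circ\frkB^\natural$ is a Nijenhuis operator on $(\g,\cdot_\g)$. Condition \eqref{eq:Hess1} is the symmetry identity above, and \eqref{eq:ConNijcon} follows by combining it with the $\dt$-closedness of $\frkB_N$, which is built into the definition of a $\KVB$-structure. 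Therefore $(\frkB,N)$ is an $\HN$-structure.

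For the converse, assume $(\frkB,N)$ is an $\HN$-structure and set $r^\sharp=N\circ(\frkB^\natural)^{-1}$. Read through \eqref{eq:Hess1}, the symmetry computation gives $\langle r^\sharp(\xi),\eta\rangle=\langle r^\sharp(\eta),\xi\rangle$, so $r\in\Sym^2(\g)$ is well defined. To see that $r$ is an $\frks$-matrix, I would invoke Proposition~\ref{pro:ConNijRB}: since $(\frkB,N)$ is an $\HN$-structure, the triple $((\frkB^\natural)^{-1},N,S=N^*)$ is an $\GRBN$-structure on $(\g^*;\ad^*,-R^*)$, whence Theorem~\ref{pro:TS1}(ii) shows that $N\circ(\frkB^\natural)^{-1}=r^\sharp$ is an $\GRB$-operator on $(\g^*;\ad^*,-R^*)$, and $r$ is an $\frks$-matrix by Lemma~\ref{lem:assrmatrix-GRB}. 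Finally, the Nijenhuis operator attached to $(r,\frkB)$ in the $\KVB$-definition is $r^\sharp\circ\frkB^\natural=N\circ(\frkB^\natural)^{-1}\circ\frkB^\natural=N$, so the requirement that $\frkB_N$ be $\dt$-closed is precisely \eqref{eq:ConNijcon}, which holds because $(\frkB,N)$ is an $\HN$-structure. Hence $(r,\frkB)$ is a $\KVB$-structure.

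Each step is short once the dictionary $T=r^\sharp$, $\Omega=\frkB^\natural$ is fixed, so I expect no serious analytic difficulty. The main obstacle is bookkeeping: the two chains of equivalences carry two a priori different $\GRB$-operators, namely $r^\sharp$ in the Maurer--Cartan description and $(\frkB^\natural)^{-1}$ in the pseudo-Hessian description, and one must check that the Nijenhuis operator produced by each is the single operator $N=r^\sharp\circ\frkB^\natural$. The identity $r^\sharp\circ\frkB^\natural=N$, together with \eqref{eq:Hess1}, is exactly what reconciles the two viewpoints.
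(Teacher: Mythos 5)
Your proposal is correct and follows essentially the same route as the paper: the forward direction goes through Theorem \ref{thm:comptoMC} and the Maurer--Cartan/Nijenhuis machinery, and the converse goes through the $\HN$--$\GRBN$ dictionary to identify $r^\sharp=N\circ(\frkB^\natural)^{-1}$ as an $\GRB$-operator; you merely cite the underlying results (Proposition \ref{pro:MC-Nijenhuis}, Proposition \ref{pro:ConNijRB} with Theorem \ref{pro:TS1}) where the paper cites the derived ones (Theorem \ref{thm:MC-GRBN}, Corollary \ref{cor:Connes-r-matrix-Nij}). Your explicit verification of the symmetry condition \eqref{eq:Hess1} and of the symmetry of $r$ fills in details the paper dismisses as obvious, which is a welcome addition rather than a deviation.
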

\begin{proof}
By Theorem \ref{thm:comptoMC} and Theorem \ref{thm:MC-GRBN}, if $(r,\frkB)$ is a $\KVB$-structure, then $N=r^\sharp\circ \frkB^\natural$ is a Nijenhuis operator on $(\g,\cdot_\g)$. By the definition of a $\KVB$-structure, it is obvious that   $(\frkB,N=r^\sharp\circ \frkB^\natural)$ is an $\HN$-structure.

Conversely, since $(\frkB,N)$ is an $\HN$-structure, by Corollary \ref{cor:Connes-r-matrix-Nij}, $(\tilde{r},N)$ is a $\KVN$-structure with $\tilde{r}\in\Sym^2(\g)$ given by \eqref{eq:Connes-r}, and thus $r$ determined by $r^\sharp=N\circ (\frkB^\natural)^{-1}$ is an $\frks$-matrix. It is obvious that $\frkB_N$ is $\dt$-closed. Thus $(r,\frkB)$ is a $\KVB$-structure.
\end{proof}

Moreover, by Proposition \ref{thm:rNijRB}, Theorem \ref{thm:MC-GRBN} and Theorem \ref{thm:comptoMC}, we have
\begin{cor}\label{cor:tttt}
If $(r,\frkB)$ is a $\KVB$-structure on a pre-Lie algebra $(\g,\cdot_\g)$, then $(r,N=r^\sharp\circ \frkB^\natural)$ is a $\KVN$-structure.
Furthermore, for all $k\in\Nat$, $r_k\in\Sym^2(\g)$ defined by $r_k(\xi,\eta)=\langle(r^\sharp\frkB^\natural)^k r^\sharp(\xi),\eta\rangle$ for all~$\xi, \eta \in \g^*$, are pairwise compatible  $\frks$-matrices.
\end{cor}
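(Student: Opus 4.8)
The plan is to funnel the hypothesis through the three cited results so that $(r,N)$ appears as the Nijenhuis part of an $\GRBN$-structure arising from a Maurer--Cartan solution, and then to read off compatibility from the hierarchy theorem. First I would note that by Lemma~\ref{lem:assrmatrix-GRB} the map $T := r^\sharp : \g^* \to \g$ is an $\GRB$-operator on the bimodule $(\g^*;\ad^*,-R^*)$, so the twilled pre-Lie algebra $\g\bowtie\g^*_{r^\sharp}$ of Theorem~\ref{thm:Uchino} is available. Since $(r,\frkB)$ is a $\KVB$-structure, Theorem~\ref{thm:comptoMC} tells us that $\Omega := \frkB^\natural : \g \to \g^*$ is a solution of the strong Maurer--Cartan equation on $\g\bowtie\g^*_{r^\sharp}$.

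Next I would apply Theorem~\ref{thm:MC-GRBN}(i) to the pair $(T,\Omega) = (r^\sharp,\frkB^\natural)$: it yields that $(r^\sharp,\,N,\,S)$ is an $\GRBN$-structure on $(\g^*;\ad^*,-R^*)$, where $N = T\circ\Omega = r^\sharp\circ\frkB^\natural$ and $S = \Omega\circ T = \frkB^\natural\circ r^\sharp$. To match this with the characterisation of a $\KVN$-structure in Proposition~\ref{thm:rNijRB}, which requires an $\GRBN$-structure of the special shape $(r^\sharp,N,S=N^*)$, the one thing to check is the identity $S = N^*$. This is exactly where the symmetry of the two tensors enters: because $r\in\Sym^2(\g)$ and $\frkB\in\Sym^2(\g^*)$ one has $(r^\sharp)^* = r^\sharp$ and $(\frkB^\natural)^* = \frkB^\natural$ under the canonical identification $\g^{**}\cong\g$ (valid since everything is finite-dimensional), whence
\[
N^* = (r^\sharp\circ\frkB^\natural)^* = (\frkB^\natural)^*\circ(r^\sharp)^* = \frkB^\natural\circ r^\sharp = S.
\]
With $S=N^*$ established, Proposition~\ref{thm:rNijRB} immediately gives that $(r,N=r^\sharp\circ\frkB^\natural)$ is a $\KVN$-structure, which is the first assertion.

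For the second assertion I would feed this $\KVN$-structure into the hierarchy machinery. Having $(r^\sharp,N,N^*)$ as an $\GRBN$-structure, Theorem~\ref{thm:hierarchy} shows that the operators $T_k = N^k\circ r^\sharp$ are pairwise compatible $\GRB$-operators on $(\g^*;\ad^*,-R^*)$, and Lemma~\ref{lem:assrmatrix-GRB} translates each such $\GRB$-operator back into an $\frks$-matrix; tracking the dictionary, $T_k = r_k^\sharp$ with $r_k(\xi,\eta) = \langle N^k r^\sharp(\xi),\eta\rangle = \langle (r^\sharp\frkB^\natural)^k r^\sharp(\xi),\eta\rangle$, and a linear combination of $\GRB$-operators being an $\GRB$-operator corresponds, under the same dictionary, to a linear combination of $\frks$-matrices being an $\frks$-matrix. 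Equivalently, this is the Corollary stated just after Theorem~\ref{thm:hierarchy} applied verbatim to the $\KVN$-structure $(r,N)$.

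I expect the only genuine content to be the bookkeeping in the middle paragraph --- verifying $S=N^*$ through the self-adjointness of $r^\sharp$ and $\frkB^\natural$ --- since once the input is recast as a Maurer--Cartan solution, everything else is a direct application of the quoted theorems. The potential pitfall is keeping the variances straight and using the identification $\g\cong\g^{**}$ correctly, so that $S$ and $N^*$ are genuinely seen to live in the same space $\gl(\g^*)$ before they are compared.
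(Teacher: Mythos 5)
Your proposal is correct and follows essentially the same route as the paper, which derives the corollary by combining Theorem~\ref{thm:comptoMC}, Theorem~\ref{thm:MC-GRBN} and Proposition~\ref{thm:rNijRB} (and then the hierarchy corollary after Theorem~\ref{thm:hierarchy} for the compatibility of the $r_k$). The only detail the paper leaves implicit is the identification $S=\Omega\circ T=\frkB^\natural\circ r^\sharp=N^*$ coming from the symmetry of $r$ and $\frkB$, which you verify explicitly and correctly.
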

Conversely, by Theorem \ref{thm:GRBN-MC} and Theorem \ref{thm:comptoMC}, we have
\begin{pro}
 Let $(r,N)$ be a $\KVN$-structure on the pre-Lie algebra $(\g,\cdot_\g)$. If $r$ is nondegenerate, then $(r,\frkB)$ is a $\KVB$-structure on  $(\g,\cdot_\g)$, where $\frkB\in\Sym^2(\g^*)$ is given by $\frkB^\natural=(r^\sharp)^{-1}\circ N$.
\end{pro}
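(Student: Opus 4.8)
The plan is to transport the problem to the dual bimodule and then invoke the invertible form of the pre-Lie Vaisman theorem, Theorem \ref{thm:GRBN-MC}, together with Theorem \ref{thm:comptoMC}. First I would apply Proposition \ref{thm:rNijRB}: since $(r,N)$ is a $\KVN$-structure, the triple $(r^\sharp,N,S=N^*)$ is an $\GRBN$-structure on the bimodule $(\g^*;\ad^*,-R^*)$ over $\g$. Because $r$ is nondegenerate, the $\GRB$-operator $T=r^\sharp:\g^*\to\g$ is invertible, so Theorem \ref{thm:GRBN-MC} applies and produces $\Omega:=(r^\sharp)^{-1}\circ N=N^*\circ(r^\sharp)^{-1}:\g\to\g^*$ as a solution of the strong Maurer-Cartan equation on the twilled pre-Lie algebra $\g\bowtie\g^*_{r^\sharp}$. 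This $\Omega$ is exactly the operator $\frkB^\natural$ in the statement, and $r^\sharp\circ\frkB^\natural=r^\sharp\circ(r^\sharp)^{-1}\circ N=N$, so the Nijenhuis operator attached to the prospective $\KVB$-structure coincides with the given $N$.

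The one point that is not automatic is that the bilinear form $\frkB$ determined by $\frkB^\natural=(r^\sharp)^{-1}\circ N$ lies in $\Sym^2(\g^*)$; only a symmetric $\frkB$ is eligible to be part of a $\KVB$-structure. I would verify this by passing to transposes. Writing $A^*$ for the transpose of a linear map under the identification $\g^{**}\cong\g$, symmetry of $r$ is equivalent to $(r^\sharp)^*=r^\sharp$, and $\frkB$ is symmetric precisely when $\frkB^\natural=(\frkB^\natural)^*$. Computing $(\frkB^\natural)^*=N^*\circ((r^\sharp)^{-1})^*=N^*\circ(r^\sharp)^{-1}$, the required identity $(r^\sharp)^{-1}\circ N=N^*\circ(r^\sharp)^{-1}$ becomes, after composing with $r^\sharp$ on the left and on the right, exactly the first $\KVN$-compatibility condition \eqref{eq:rmn1}, namely $N\circ r^\sharp=r^\sharp\circ N^*$. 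Hence the $\KVN$ axioms force $\frkB\in\Sym^2(\g^*)$, and this is the step I expect to be the main (if modest) obstacle, since everything else is a direct appeal to cited results.

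Finally, with $r$ an $\frks$-matrix, $\frkB\in\Sym^2(\g^*)$, and $\frkB^\natural=\Omega$ a solution of the strong Maurer-Cartan equation on $\g\bowtie\g^*_{r^\sharp}$, the ``if'' direction of Theorem \ref{thm:comptoMC} immediately yields that $(r,\frkB)$ is a $\KVB$-structure, which completes the proof. I would also note that the two cocycle requirements built into the definition of a $\KVB$-structure, namely the $\dt$-closedness of $\frkB$ and of $\frkB_N$, are already encoded in the components \eqref{eq:SMC-1} and \eqref{eq:SMC-2} of the strong Maurer-Cartan equation under $\Omega=\frkB^\natural$, so no separate verification of those conditions is needed beyond the symmetry check above.
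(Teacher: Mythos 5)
Your proposal is correct and follows exactly the route the paper takes: the paper derives this proposition directly from Theorem \ref{thm:GRBN-MC} (applied to the $\GRBN$-structure $(r^\sharp,N,N^*)$ coming from Proposition \ref{thm:rNijRB}, with $r^\sharp$ invertible) combined with Theorem \ref{thm:comptoMC}. Your explicit verification that $\frkB$ is symmetric, via $(\frkB^\natural)^*=N^*\circ(r^\sharp)^{-1}$ and condition \eqref{eq:rmn1}, is a detail the paper leaves implicit, and it is carried out correctly.
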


\begin{ex}
 Consider the $\KVB$-structure $(r,\frkB)$ given by  Example \ref{ex:KVB1}.  Then by Corollary \ref{cor:tttt}, $(r,N)$ given by $$r=r_{11} e_1\otimes e_1+r_{22} e_2\otimes e_2+r_{23} e_2\otimes e_3+r_{23} e_3\otimes e_2,\quad N =\begin{bmatrix}ar_{11}&0&
0\\ 0&br_{23}&br_{22}+cr_{23}\\0&0&br_{23}\end{bmatrix}$$
 is a $\KVN$-structure.
\end{ex}

\begin{ex}
 Consider the $\KVB$-structure $(r,\frkB)$ given by  Example \ref{ex:KVB2}.  Then by Corollary \ref{cor:tttt}, $(r,N)$ given by $$r=r_{11} e_1\otimes e_1+r_{12}e_1\otimes e_2+r_{12}e_2\otimes e_1+r_{13} e_1\otimes e_3+r_{13} e_3\otimes e_1,\quad N =\begin{bmatrix}0&ar_{12}+br_{13}&
br_{12}+cr_{13}\\ 0&0&0\\0&0&0\end{bmatrix}$$
 is a $\KVN$-structure.
\end{ex}

 \end{document}